\newcommand*{\MRref}[2]{ \href{http://www.ams.org/mathscinet-getitem?mr=#1}{MR \textbf{#1}}}
\newcommand*{\arxiv}[1]{\href{http://www.arxiv.org/abs/#1}{arXiv: #1}}
\renewcommand{\PrintDOI}[1]{\href{http://dx.doi.org/\detokenize{#1}}{doi: \detokenize{#1}}}
\setlist[enumerate,1]{label=\textup{(\arabic*)}}
\tikzset{node distance=2cm, auto}
\tikzset{cd/.style=matrix of math nodes,row sep=2em,column sep=2em, text height=1.5ex, text depth=0.5ex}
\tikzset{cdar/.style=->,auto}
\tikzset{mid/.style={anchor=mid}} 
\tikzset{narrowfill/.style={inner sep=1pt, fill=white}}
\numberwithin{equation}{section}
\theoremstyle{plain}
\newtheorem{theorem}[equation]{Theorem}
\newtheorem{lemma}[equation]{Lemma}
\newtheorem{proposition}[equation]{Proposition}
\newtheorem{corollary}[equation]{Corollary}
\theoremstyle{definition}
\newtheorem{definition}[equation]{Definition}
\theoremstyle{remark}
\newtheorem{remark}[equation]{Remark}
\newtheorem{example}[equation]{Example}
\DeclareMathOperator{\Iso}{Iso}
\DeclareMathOperator{\dom}{dom}
\DeclareMathOperator{\Prim}{Prim}
\newcommand*{\nb}{\nobreakdash}
\newcommand*{\Star}{\(^*\)\nobreakdash-}
\newcommand*{\C}{\mathbb C}
\newcommand*{\Z}{\mathbb Z}
\newcommand*{\R}{\mathbb R}
\newcommand*{\N}{\mathbb N}
\newcommand*{\Torus}{\mathbb T}
\newcommand*{\Bound}{\mathbb B}
\newcommand*{\Banb}{\mathfrak B}
\newcommand*{\A}{\mathfrak A}
\newcommand*{\Sect}{\mathfrak S}
\newcommand*{\Cst}{\textup C^*}
\newcommand*{\Cont}{\textup C}
\newcommand*{\Mult}{\mathcal M}
\newcommand*{\LL}{\mathcal{L}}
\newcommand*{\Id}{\textup{Id}}
\newcommand*{\Hils}{\mathcal H}
\newcommand*{\Hilm}[1][H]{\mathcal #1}
\newcommand*{\defeq}{\mathrel{\vcentcolon=}}
\newcommand*{\congto}{\xrightarrow\sim}
\newcommand*{\norm}[1]{\lVert#1\rVert}
\newcommand*{\abs}[1]{\lvert#1\rvert}
\newcommand*{\conj}[1]{\overline{#1}}
\newcommand*{\braket}[2]{\langle#1{\mid}#2\rangle}
\newcommand*{\bigbraket}[2]{\bigl\langle#1\bigm|#2\bigr\rangle}
\newcommand*{\s}{s}
\newcommand*{\rg}{r}
\newcommand*{\FellH}{\mathfrak A}
\newcommand*{\cstar}{\texorpdfstring{\(\mathrm{C}^*\)\nobreakdash-\hspace{0pt}}{*-}}
\newcommand*{\into}{\hookrightarrow}
\newcommand*{\onto}{\twoheadrightarrow}
\newcommand*{\pr}{\mathrm{pr}}
\newcommand*{\dd}{\,\mathrm d}
\begin{document}
\title[Iterated crossed products for groupoid fibrations]{Iterated crossed products\\for groupoid fibrations}

\author{Alcides Buss}
\email{alcides@mtm.ufsc.br}

\address{Departamento de Matem\'atica\\
 Universidade Federal de Santa Catarina\\
 88.040-900 Florian\'opolis-SC\\
 Brazil}

\author{Ralf Meyer}
\email{rmeyer2@uni-goettingen.de}

\address{Mathematisches Institut\\
 Georg-August-Universit\"at G\"ottingen\\
 Bunsenstra\ss e 3--5\\
 37073 G\"ottingen\\
 Germany}

\begin{abstract}
  We define and study fibrations of topological groupoids.  We
  interpret a groupoid fibration \(L\to H\) with fibre~\(G\) as an
  action of~\(H\) on~\(G\) by groupoid equivalences.  Our main result
  shows that a crossed product for an action of~\(L\) is isomorphic to
  an iterated crossed product first by~\(G\) and then by~\(H\).  Here
  ``groupoid action'' means a Fell bundle over the groupoid, and
  ``crossed product'' means the section \cstar{}algebra.
\end{abstract}
\subjclass[2010]{46L55, 22A22}
\thanks{Supported by CNPq/CsF (Brazil).}
\maketitle

\setcounter{tocdepth}{1}
\tableofcontents

\section{Introduction}
\label{sec:introduction}

What does it mean for a topological groupoid~\(H\) to act on another
topological groupoid~\(G\)?  The definition of an action by groupoid
isomorphisms is straightforward if complicated
(see~§\ref{sec:classical_action}).  Some examples, however, require
arrows in~\(H\) to act by equivalences, not isomorphisms.  If~\(H\) is
an étale groupoid, such actions by equivalences are defined
in~\cite{Buss-Meyer:Actions_groupoids} using the inverse semigroup of
bisections of~\(H\).  Here we extend this notion of action to
non-étale topological groupoids through the notion of a
\emph{fibration of topological groupoids}, briefly groupoid fibration.

The idea is the following.  An action of~\(H\) on~\(G\) should give a
transformation groupoid \(L\defeq G\rtimes H\) that contains~\(G\) and
comes with a continuous functor \(L\to H\).  Thus defining actions of
topological groupoids on topological groupoids amounts to
characterising which chains of continuous functors \(G\hookrightarrow
L\to H\) correspond to actions.  We require \(L\to H\) to be a
``groupoid fibration'' with ``fibre'' \(G\subseteq L\) (see
Definition~\ref{def:groupoid_fibration}).  This gives the same notion
of action as in~\cite{Buss-Meyer:Actions_groupoids} if~\(H\) is étale.

Groupoid fibrations are inspired by higher category theory.  The
thesis of Li Du~\cite{LiDu:Thesis} describes actions of
\(\infty\)\nb-groupoids on \(\infty\)\nb-groupoids through Kan
fibrations.  By definition, a groupoid fibration between two
topological groupoids is a Kan fibration between the associated
topological \(\infty\)\nb-groupoids.

Let \(F\colon L\to H\) be a groupoid fibration with fibre \(G\subseteq
L\).  Then we describe an induced action of~\(H\) on the
\(\Cst\)\nb-algebra of~\(G\), such that the crossed product
is~\(\Cst(L)\).  This generalises the well known decomposition
\(\Cst(X\rtimes H)\cong \Cont_0(X)\rtimes H\) for an action of a
groupoid~\(H\) on a space~\(X\).  In general, an ``action'' of a
locally compact groupoid on a \(\Cst\)\nb-algebra is a
(saturated) Fell bundle over the groupoid, and its ``crossed product''
is the section \(\Cst\)\nb-algebra of the Fell bundle.  Saturated Fell
bundles are interpreted as actions by Morita--Rieffel equivalences
in~\cite{Buss-Meyer-Zhu:Higher_twisted}.  We tacitly assume all Fell
bundles over groupoids to be saturated.

More generally, we decompose the ``crossed product'' for any
``action'' of~\(L\) in such a way.  That is, for a Fell bundle
over~\(L\), we construct a Fell bundle over~\(H\) with the restricted
section algebra over~\(G\) as unit fibre and show that the section
algebra of this Fell bundle over~\(H\) is the section algebra of the
original Fell bundle over~\(L\).  In brief, \((A\rtimes G)\rtimes H
\cong A\rtimes L\) for an action of~\(L\) on a \cstar{}algebra~\(A\).

If \(G\) and~\(H\) are groups, then a groupoid fibration \(L\to H\)
with fibre~\(G\) is nothing but an extension of topological groups
\(G\rightarrowtail L \onto H\).  Our result on iterated crossed
products is known in this case in the language of Green twisted
actions.

As in~\cite{Buss-Meyer:Actions_groupoids}, a motivating example for
our theory is to construct an action of a locally Hausdorff, but
non-Hausdorff groupoid~\(H\) on a \(\Cst\)\nb-algebra that represents
the action of~\(H\) on its arrow space~\(H^1\) by left multiplication.
This is the simplest example of a free and proper action.
Since~\(H^1\) is non-Hausdorff, the \(\Cst\)\nb-algebra that best
describes~\(H^1\) is the groupoid \(\Cst\)\nb-algebra of the \v{C}ech
groupoid of a Hausdorff, open covering of~\(H^1\).  There is usually
no classical action of~\(H\) by automorphisms on this \v{C}ech
groupoid.
There is, however, a groupoid fibration describing this
action, and an associated Fell bundle over~\(H\) describing the action
in \(\Cst\)\nb-algebraic terms.

Groupoid fibrations of plain groupoids without topology or other extra
structure are defined already by Ronald
Brown~\cite{Brown:Fibrations_groupoids}.  A definition for topological
groupoids is given
in~\cite{Deaconu-Kumjian-Ramazan:Fell_groupoid_morphism}; but the
definition in~\cite{Deaconu-Kumjian-Ramazan:Fell_groupoid_morphism} is
not used in~\cite{Deaconu-Kumjian-Ramazan:Fell_groupoid_morphism},
works only in the étale case, and contains an unnecessary extra
assumption.  Our construction is similar to the one
in~\cite{Deaconu-Kumjian-Ramazan:Fell_groupoid_morphism}, except that
we insist on getting \emph{saturated} Fell bundles and allow
non-étale, locally Hausdorff, locally compact groupoids (with a
Hausdorff object space and a Haar system).  We do not require
amenability assumptions as
in~\cite{Deaconu-Kumjian-Ramazan:Fell_groupoid_morphism} since we work
with full crossed products throughout.

Reduced crossed products for non-Hausdorff groupoids do not always
behave well for iterated crossed products.  Counterexamples
in~\cite{Buss-Exel-Meyer:Reduced} in the étale case show this.

Section~\ref{sec:groupoid_fibrations} defines groupoid fibrations and
illustrates the notion by some examples and basic properties.  In
particular, classical groupoid actions by automorphisms and groupoid
extensions give examples of groupoid fibrations.  Most of the general
theory works for arbitrary topological groupoids, even for groupoids
in a category with pretopology as in~\cite{Meyer-Zhu:Groupoids}.

Section~\ref{sec:fibrations_gradings} compares groupoid fibrations
with étale~\(H\) to the actions of~\(H\) defined
in~\cite{Buss-Meyer:Actions_groupoids} using inverse semigroups.
Section~\ref{sec:locally_Hausdorff_compact} shows that the
transformation groupoid~\(L\) inherits the properties of being
(locally) Hausdorff and locally compact from \(G\) and~\(H\).
Section~\ref{sec:Haar} describes how Haar systems on \(G\) and~\(H\)
induce a Haar system on~\(L\).  Section~\ref{sec:crossed} contains our
main result on iterated crossed products.
Section~\ref{sec:applications} constructs the Fell bundle describing
the translation action on the arrow space of a locally Hausdorff
groupoid and explains how certain results in
\cites{Brown-Goehle-Williams:Equivalence_iterated,
  Brown-Huef:Decomposing, Ionescu-Kumjian-Sims-Williams:Stabilization,
  Kaliszewski-Muhly-Quigg-Williams:Coactions_Fell,
  Rennie-Robertson-Sims:Groupoid_CP, Sims-Williams:Equivalence_reduced}
are contained in our main theorem.

\section{Groupoid fibrations}
\label{sec:groupoid_fibrations}

A topological groupoid consists of two topological spaces \(G^1\)
and~\(G^0\) with \emph{open}, continuous range and source maps
\(\rg,\s\colon G^1\rightrightarrows G^0\) and continuous
multiplication, inversion and unit maps satisfying the usual algebraic
conditions.  The range and source maps are automatically open if the
groupoid has a Haar system (see~\cite{Renault:Groupoid_Cstar}).  Until
we consider Haar systems and groupoid \(\Cst\)\nb-algebras, we allow
\emph{arbitrary} topological spaces \(G^0\) and~\(G^1\) as
in~\cite{Buss-Meyer:Actions_groupoids}.  We need neither Hausdorffness
nor local compactness, but we do need open range and source maps.

\begin{definition}
  \label{def:groupoid_fibration}
  Let \(L\) and~\(H\) be topological groupoids.  A \emph{groupoid
    fibration} is a continuous functor \(F\colon L\to H\)
  (continuous maps \(F^i\colon L^i\to H^i\) for \(i=0,1\) that
  intertwine \(\rg\), \(\s\) and the multiplication maps), such that
  the map
  \begin{equation}
    \label{eq:groupoid_fibration}
    (F^1,\s)\colon L^1\to H^1\times_{\s,H^0,F^0} L^0
    \defeq \{(h,x)\in H^1\times L^0 \mid \s(h)=F^0(x)\}
  \end{equation}
  is an open surjection.  Its \emph{fibre} is the subgroupoid~\(G\)
  of~\(L\) defined by \(G^0=L^0\) and
  \[
  G^1 = \{ g\in L^1 \mid F^1(g) = 1_{F^0(\s(g))}\},
  \]
  equipped with the subspace topology on \(G^1\subseteq L^1\).

  A \emph{groupoid covering} is a functor \(F\colon L\to H\) for
  which~\eqref{eq:groupoid_fibration} is a homeomorphism.
\end{definition}

\begin{lemma}
  \label{lem:fibre_top_groupoid}
  The fibre of a groupoid fibration is a topological groupoid.
\end{lemma}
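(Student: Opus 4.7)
The plan is to verify the algebraic groupoid axioms, the continuity of all structure maps, and the openness of the range and source maps of~\(G\). The first two are routine; only openness uses the fibration hypothesis.

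\emph{Algebraic structure.} Since~\(F\) is a functor, \(F^1(1_x)=1_{F^0(x)}\) for every \(x\in L^0\), so every unit of~\(L\) lies in~\(G^1\). If \(g,g'\in G^1\) are composable, then \(F^1(g g')=F^1(g)F^1(g')\) is a product of units and hence a unit, so \(g g'\in G^1\). If \(g\in G^1\), then \(F^0(\rg(g))=\rg(F^1(g))=\s(F^1(g))=F^0(\s(g))\) because \(F^1(g)\) is a unit; therefore \(F^1(g^{-1})=F^1(g)^{-1}\) is the unit at \(F^0(\s(g^{-1}))\), and \(g^{-1}\in G^1\). The operations of~\(G\) are thus restrictions of those of~\(L\), and their continuity is immediate from the subspace topology on \(G^1\subseteq L^1\).

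\emph{Openness of the source map.} Consider the continuous ``unit section''
\[
\iota\colon L^0 \to H^1\times_{\s,H^0,F^0} L^0,\qquad x\mapsto(1_{F^0(x)},x),
\]
which is a homeomorphism onto its image, with inverse the projection to the second coordinate. By definition, \(G^1 = (F^1,\s)^{-1}(\iota(L^0))\), and the source map \(\s|_{G^1}\colon G^1\to L^0\) factors as the restriction \((F^1,\s)|_{G^1}\colon G^1\to \iota(L^0)\) followed by the homeomorphism \(\iota(L^0)\congto L^0\). The key step is the elementary fact that any open map \(f\colon X\to Y\) restricts to an open map \(f^{-1}(B)\to B\) for every subspace \(B\subseteq Y\), since \(f(V\cap f^{-1}(B)) = f(V)\cap B\) for \(V\subseteq X\) open. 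Applying this to the open map \((F^1,\s)\) given by the fibration hypothesis, with \(B=\iota(L^0)\), yields openness of \(\s|_{G^1}\).

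\emph{Openness of the range map.} Since \(G^1\) is stable under inversion and inversion is a self\nb-homeomorphism of \(L^1\), it restricts to a self\nb-homeomorphism of~\(G^1\); then \(\rg|_{G^1}=\s|_{G^1}\circ \mathrm{inv}\) is open. The main (mild) obstacle is just recognising~\(G^1\) as the preimage of a canonical copy of~\(L^0\) inside \(H^1\times_{H^0} L^0\), so that the openness of \((F^1,\s)\) transports directly to the source map of~\(G\).
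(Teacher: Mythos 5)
Your proof is correct and follows essentially the same route as the paper: the paper identifies \(G^1\) with the pull-back of the open surjection \((F^1,\s)\) along the unit section \((u\circ F^0,\Id)\colon L^0\to H^1\times_{\s,H^0,F^0} L^0\) and invokes stability of open surjections under pull-back, which is exactly your restriction-to-a-preimage argument spelled out in elementary terms. The algebraic verifications and the deduction of openness of \(\rg|_{G^1}\) via inversion also match the paper's proof.
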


\begin{proof}
  If \(g\in G^1\), then \(F^1(g)=1_{F^0(\s(g))}\), so \(F^0(\rg(g)) =
  \rg(F^1(g)) = \s(F^1(g)) = F^0(\s(g))\).  Thus \(g^{-1}\in G^1\) as
  well.  We also get \(g_1\cdot g_2\in G^1\) if \(g_1,g_2\in G^1\), so
  that~\(G\) is a subgroupoid of~\(L\).  It remains to prove that the
  source map (and hence the range map) in~\(G\) is open.  We use that
  the pull-back of an open surjection is again an open surjection
  (see~\cite{Meyer-Zhu:Groupoids}).  There is a fibre product diagram
  \[
  \begin{tikzpicture}[baseline=(current bounding box.west)]
    \matrix (m) [cd,column sep=4em] {
      G^1 & G^0=L^0 \\
      L^1 & H^1\times_{\s,H^0,F^0} L^0,\\
    };
    \draw[cdar,->>] (m-1-1) -- node {\(\s\)} (m-1-2);
    \draw[right hook->] (m-1-1) -- (m-2-1);
    \draw[cdar,->>] (m-2-1) -- node {\((F^1,\s)\)} (m-2-2);
    \draw[right hook->] (m-1-2) -- node {\((u\circ F^0,\Id)\)} (m-2-2);
  \end{tikzpicture}
  \]
  where \(u\colon H^0\into H^1\) denotes the unit map.  Since
  \((F^1,\s)\) is an open surjection by assumption, so is \(\s\colon
  G^1\onto G^0\).
\end{proof}

\begin{remark}
  The inversion map turns~\eqref{eq:groupoid_fibration} into the map
  \[
  (F^1,\rg)\colon L^1\to H^1\times_{\rg,H^0,F^0} L^0.
  \]
  Hence \(F\colon L\to H\) is a fibration if and only if~\((F^1,\rg)\)
  is an open surjection.
\end{remark}

\begin{proposition}
  \label{pro:fibration_basic_action_GL}
  Let \(F\colon L\to H\)
  be a groupoid fibration with fibre~\(G\).
  The left multiplication action of~\(G\)
  on~\(L^1\)
  with the map \((F^1,s)\colon L^1 \onto H^1 \times_{\s,H^0,F^0} L^0\)
  in~\eqref{eq:groupoid_fibration} as bundle projection is a principal
  \(G\)\nb-bundle,
  that is, the bundle projection is an open surjection and the
  following map is a homeomorphism:
  \begin{equation}
    \label{eq:fibration_principal_bundle}
    G^1 \times_{\s,L^0,\rg} L^1 \congto
    L^1\times_{(F^1,\s),H^1\times_{\s,H^0,F^0} L^0,(F^1,\s)} L^1,\qquad
    (g,l)\mapsto (g\cdot l,l).
  \end{equation}
\end{proposition}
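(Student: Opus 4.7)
The first claim, that the bundle projection $(F^1,s)\colon L^1\to H^1\times_{s,H^0,F^0}L^0$ is an open surjection, is built into the definition of a groupoid fibration, so nothing needs to be done there. Before anything else, I would check that left multiplication by~$G^1$ really does preserve the fibres of $(F^1,s)$: if $g\in G^1$ and $l\in L^1$ with $s(g)=r(l)$, then $s(g\cdot l)=s(l)$ and, because $F^1(g)=1_{F^0(s(g))}=1_{r(F^1(l))}$, also $F^1(g\cdot l)=F^1(g)\cdot F^1(l)=F^1(l)$. So the formula $(g,l)\mapsto (g\cdot l,l)$ indeed lands in the fibre product on the right hand side of~\eqref{eq:fibration_principal_bundle}.

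The core of the proof is to exhibit an explicit continuous inverse to the map in~\eqref{eq:fibration_principal_bundle}. My candidate is
\[
(l_1,l_2)\longmapsto (l_1\cdot l_2^{-1},\,l_2).
\]
This makes sense because pairs $(l_1,l_2)$ in the target of~\eqref{eq:fibration_principal_bundle} satisfy $s(l_1)=s(l_2)$, so $l_1\cdot l_2^{-1}$ is a well-defined element of~$L^1$. The key verification is that $l_1\cdot l_2^{-1}$ lies in~$G^1$: since $F^1(l_1)=F^1(l_2)$, the functoriality of~$F$ gives $F^1(l_1\cdot l_2^{-1})=F^1(l_1)\cdot F^1(l_2)^{-1}=1_{r(F^1(l_2))}=1_{F^0(r(l_2))}=1_{F^0(s(l_1\cdot l_2^{-1}))}$, which is exactly the defining condition for~$G^1$. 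The pair $(l_1\cdot l_2^{-1},l_2)$ thus lies in $G^1\times_{s,L^0,r}L^1$, and $(l_1\cdot l_2^{-1})\cdot l_2=l_1$ shows this is a one-sided inverse; the other composition is obvious from associativity.

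Finally, both~\eqref{eq:fibration_principal_bundle} and its inverse are assembled from the continuous structural maps of~$L$ (multiplication and inversion) together with projections, so continuity on both sides is immediate. I anticipate no serious obstacle here; the only substantive point is the identity $F^1(l_1\cdot l_2^{-1})=1$, which is what justifies using inversion in~$L$ to build the inverse map while staying inside the subgroupoid~$G$.
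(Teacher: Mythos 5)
Your proof is correct and follows essentially the same route as the paper: both verify well-definedness of \((g,l)\mapsto (g\cdot l,l)\) using \(F^1(g\cdot l)=F^1(l)\) and \(\s(g\cdot l)=\s(l)\), and then exhibit the explicit continuous inverse \((l_1,l_2)\mapsto (l_1\cdot l_2^{-1},l_2)\), with the key point in each case being that multiplicativity of \(F^1\) puts \(l_1\cdot l_2^{-1}\) in \(G^1\). No gaps.
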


Actions that are part of principal bundles are called \emph{basic} in
\cites{Buss-Meyer:Actions_groupoids, Meyer-Zhu:Groupoids}.

\begin{proof}
  That~\((F^1,s)\) is an open surjection is exactly the assumption
  of~\(F\) being a groupoid fibration.  The
  map~\eqref{eq:fibration_principal_bundle} is well defined because
  \(F^1(gl) = F^1(l)\) and \(\s(gl)=\s(l)\) for all \(g\in G^1\),
  \(l\in L^1\) with \(\s(g)=\rg(l)\).  We claim that
  \((l_1,l_2)\mapsto (l_1\cdot l_2^{-1},l_2)\) is a well defined map
  \(L^1\times_{(F^1,\s),H^1\times_{\s,H^0,F^0} L^0,(F^1,\s)} L^1 \to
  G^1 \times_{\s,L^0,\rg} L^1\).  Let \((l_1,l_2)\in L^1\times L^1\)
  satisfy \((F^1,\s)(l_1)= (F^1,\s)(l_2)\).  Since
  \(\s(l_1)=\s(l_2)\), \(g\defeq l_1l_2^{-1}\) is well defined.  Since
  \(F^1(l_1)=F^1(l_2)\) and~\(F^1\) is multiplicative,
  \(F^1(g)=1_{F^0(\s(g))}\), so \(g\in G^1\).  And \(\s(g)=\rg(l_2)\) by
  construction, so \((g,l_2)\in G^1 \times_{\s,L^0,\rg} L^1\).  The
  map \((l_1,l_2)\mapsto (l_1l_2^{-1},l_2)\) is continuous and a
  two-sided inverse for~\eqref{eq:fibration_principal_bundle}.
\end{proof}

\begin{remark}
  \label{rem:def_pretopology}
  The definitions of a groupoid fibration and covering carry over to
  the setting of groupoids in categories with pretopology studied
  in~\cite{Meyer-Zhu:Groupoids}.  A pretopology specifies a class of
  special morphisms in the category called ``covers,'' subject to some
  axioms.  In particular, pull-backs of covers are again covers.  In
  this article, we use the category of topological spaces with open
  surjections as covers.

  For groupoids in a category with pretopology, a groupoid fibration
  is defined as a functor where~\eqref{eq:groupoid_fibration} is a
  \emph{cover}, and a groupoid covering as a functor
  where~\eqref{eq:groupoid_fibration} is an isomorphism.  The proof of
  Lemma~\ref{lem:fibre_top_groupoid} still works because it only uses
  that pull-backs of covers remain covers.  Similarly, the proof of
  Proposition~\ref{pro:fibration_basic_action_GL} still works for
  groupoids in any category with pretopology.  Most definitions,
  results and examples in this section generalise to groupoids in
  categories with pretopology.  Namely, this is the case for
  Example~\ref{exa:act_space},
  Proposition~\ref{pro:groupoid_covering},
  Definition~\ref{def:classical_action},
  Lemma~\ref{lem:classical_action_trafo},
  Proposition~\ref{pro:classical_action_fibration},
  Example~\ref{exa:bitransformation_groupoid},
  Example~\ref{exa:hypercovers}, Lemma~\ref{lem:fibration_to_space},
  Proposition~\ref{pro:compose_fibrations},
  Example~\ref{exa:act_arrows1},
  Lemma~\ref{lem:action_space_trafo-groupoid},
  Definition~\ref{def:groupoid_extension}, and
  Lemma~\ref{lem:fibration_iso_objects}.
  Lemma~\ref{lem:F1_F0_open_surjective} carries over if
  \cite{Meyer-Zhu:Groupoids}*{Assumption~2.9} holds for our
  pretopology.
\end{remark}

\begin{remark}
  \label{rem:Lie_groupoid_fibration}
  Lie groupoids are groupoids in the category of smooth manifolds with
  surjective submersions as covers.  The definition of a Lie groupoid
  fibration in Remark~\ref{rem:def_pretopology} is weaker than the
  usual one in \cites{Higgins-Machenzie:Fibrations,
    Mackenzie:General_Lie_groupoid_algebroid}, which also asks for the
  map \(F^0\colon L^0\to H^0\) on objects to be a cover.  This is
  reasonable if one wants~\(H\) to be determined by \(L\) and~\(G\) as
  a generalised quotient (see
  \cite{Mackenzie:General_Lie_groupoid_algebroid}*{Theorems 2.4.6 and
    2.4.8}).  But it rules out important examples.  For a groupoid
  covering, requiring~\(F^0\) to be a cover restricts to actions with
  a cover as anchor map.  An important counterexample is the action
  of~\(H\) by right translations on \(H^x\defeq \{h\in H\mid
  \rg(h)=x\}\) for \(x\in H^0\).
\end{remark}

\subsection{Groupoid coverings and actions on spaces}
\label{sec:groupoid_coverings}

\begin{example}
  \label{exa:act_space}
  Let~\(X\) be a topological space with an action of a topological
  groupoid~\(H\).  View~\(X\) as a groupoid with only identity arrows.
  The functor from the transformation groupoid \(H\ltimes X\) to~\(H\)
  which is the anchor map on objects and the map \((h,x)\mapsto h\) on
  arrows is a groupoid covering with fibre~\(X\).  The
  map~\eqref{eq:groupoid_fibration} is an isomorphism by the
  definition of~\(H\ltimes X\): \((H\ltimes X)^1 \defeq \{(h,x)\in
  H^1\times X \mid \s(h)=\rg(x)\}\).
\end{example}

\begin{proposition}
  \label{pro:groupoid_covering}
  Any groupoid covering is isomorphic to \(H\ltimes X\to H\) for some
  \(H\)\nb-action~\(X\).  A groupoid fibration is a groupoid covering
  if and only if its fibre is a groupoid with only identity arrows,
  that is, a space viewed as a groupoid.
\end{proposition}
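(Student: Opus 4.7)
The plan is to prove part~(2) first, since the characterization it gives feeds directly into part~(1). Recall that, by Proposition~\ref{pro:fibration_basic_action_GL}, the map $(F^1,\s)$ in~\eqref{eq:groupoid_fibration} is always an open surjection, so the extra content of being a covering is \emph{injectivity}.

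For the direct implication in~(2), assume $F$ is a covering. For any $g\in G^1$, both $g$ and $1_{\s(g)}$ satisfy $(F^1,\s)(\,\cdot\,)=(1_{F^0(\s(g))},\s(g))$; injectivity forces $g=1_{\s(g)}$, so the fibre has only identity arrows. Conversely, assume the fibre has only units. If $(F^1,\s)(l_1)=(F^1,\s)(l_2)$, then $\s(l_1)=\s(l_2)$, so $g\defeq l_1 l_2^{-1}$ is well defined; the computation $F^1(g)=F^1(l_1)F^1(l_2)^{-1}=1_{F^0(\s(g))}$ (as in the proof of Proposition~\ref{pro:fibration_basic_action_GL}) shows $g\in G^1$. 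By hypothesis $g$ is a unit, hence $l_1=l_2$, and $(F^1,\s)$ is a continuous open bijection, hence a homeomorphism.

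For part~(1), given a covering $F\colon L\to H$, I set $X\defeq L^0$ with anchor $F^0\colon X\to H^0$. Given $(h,x)$ with $\s(h)=F^0(x)$, the homeomorphism $(F^1,\s)$ provides a unique $l\in L^1$ with $F^1(l)=h$ and $\s(l)=x$; define $h\cdot x\defeq \rg(l)$. Continuity of the action follows from continuity of $(F^1,\s)^{-1}$ and $\rg$; the anchor, unit, and associativity axioms are routine consequences of the functoriality of $F$ together with the uniqueness of the lift~$l$ (for instance, if $l_i$ lifts $(h_i,x_i)$ and $y=h_2\cdot x_2$, then $l_1 l_2$ lifts $(h_1 h_2,x_2)$, so $(h_1 h_2)\cdot x_2 = \rg(l_1 l_2) = \rg(l_1) = h_1\cdot y$). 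The map sending $l\in L^1$ to $(F^1(l),\s(l))$ and acting as the identity on objects is then an isomorphism of topological groupoids $L\congto H\ltimes X$ over~$H$: it is a homeomorphism on arrows by the covering hypothesis, and a functor by construction of the $H$-action on $X$. The only mild obstacle is the bookkeeping of those action axioms; once the action is in place, the identification with $H\ltimes X$ is essentially tautological, as the arrow-space map is literally the bijection $(F^1,\s)$.
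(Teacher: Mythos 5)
Your proof is correct, and for the first assertion it is essentially the paper's argument: you build the $H$\nb-action on $X=L^0$ from the inverse of the homeomorphism~\eqref{eq:groupoid_fibration} composed with~$\rg$, and the identification $L\congto H\ltimes X$ is exactly that homeomorphism on arrows, as in the paper. The one place where you genuinely diverge is the converse of the second assertion: where the paper invokes Proposition~\ref{pro:fibration_basic_action_GL} together with \cite{Meyer-Zhu:Groupoids}*{Proposition~5.9} to conclude that a principal bundle for a groupoid with only identity arrows has a homeomorphism as bundle projection, you instead prove injectivity of $(F^1,\s)$ by the direct element computation $g\defeq l_1l_2^{-1}\in G^1$ and then use that a continuous open bijection is a homeomorphism. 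Your route is more elementary and self-contained (it also makes explicit the easy forward direction that the paper only states), but the paper deliberately phrases its proof abstractly so that, as announced just before it, the argument carries over verbatim to groupoids in categories with pretopology (Remark~\ref{rem:def_pretopology}); your element-chasing and the ``open continuous bijection is a homeomorphism'' step are specific to topological spaces and would not transfer to that setting.
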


This is the main result
in~\cite{Brown-Danesh-Hardy:Topological_groupoidsII}.  We sketch the
proof to show how the argument carries over to groupoids in categories
with pretopology as in Remark~\ref{rem:def_pretopology}.

\begin{proof}
  First let \(F\colon L\to H\) be a groupoid covering.  Its fibre
  is~\(L^0\) viewed as a groupoid with only identity arrows.  We
  construct an \(H\)\nb-action on~\(X\defeq L^0\).  Its anchor map
  is~\(F^0\).  Let \(\tau\colon H^1\times_{\s,H^0,F^0} L^0 \congto
  L^1\) be the inverse of the isomorphism
  in~\eqref{eq:groupoid_fibration}.  We define the \(H\)\nb-action to
  be \(\rg\circ\tau\colon H^1\times_{\s,H^0,F^0} L^0 \to L^0\); this
  is indeed a groupoid action.  The identity on objects and~\(\tau\)
  on arrows give a groupoid isomorphism \(H\ltimes L^0\congto L\).
  This is the only isomorphism for which \(F\colon L\to H\) becomes
  the canonical functor \(H\ltimes L^0\to H\).

  Now let \(F\colon L\to H\) be a groupoid fibration whose fibre~\(G\) is
  the space \(X=L^0\) of unit arrows.  The map~\eqref{eq:groupoid_fibration} is
  a bundle projection for a principal \(G\)\nb-bundle by
  Proposition~\ref{pro:fibration_basic_action_GL}, and~\(G\) has only
  identity arrows by assumption.  Hence~\eqref{eq:groupoid_fibration}
  must be a homeomorphism by
  \cite{Meyer-Zhu:Groupoids}*{Proposition~5.9}; that is, \(F\) is a
  groupoid covering.
\end{proof}

Thus a groupoid fibration \(L\to H\) where the fibre~\(G\) is a space
is equivalent to an \(H\)\nb-action on~\(G\) with transformation
groupoid~\(L\).  This suggests to view a groupoid fibration \(F\colon
L\to H\) with a groupoid~\(G\) as its fibre as a generalised action
of~\(H\) on~\(G\) with transformation groupoid~\(L\).

\subsection{Groupoid equivalences defined by a fibration}
\label{sec:equivalences_from_fibration}

Let \(F\colon L\to H\) be a groupoid fibration.  We are going to
construct an action of the groupoid~\(H\) on~\(G\) by equivalences.
For \(h\in H^1\), let \(L_h=(F^1)^{-1}(h)\subseteq L^1\).  If \(y\in
H^0\), then~\(L_{1_y}\) is contained in~\(G^1\) and consists of those
\(g\in G^1\) with \(F^0(\rg(g))=y\).  Since \(F^0(\rg(g))=F^0(\s(g))\)
for \(g\in G\), the subset \((F^0)^{-1}(y)\subseteq G^0\) is
\(G\)\nb-invariant, and \(G_y\defeq L_{1_y}\) is the restriction
of~\(G\) to this \(G\)\nb-invariant subset.  This is the subgroupoid
of~\(G\) with \(G^0_y=(F^0)^{-1}(y)\) and
\(G^1_y=\s^{-1}_G(G^0_y)=\rg^{-1}_G(G^0_y)\).

\begin{lemma}
  \label{lem:Lh_locally_closed}
  The subgroupoids \(G_{\rg(h)}\) and~\(G_{\s(h)}\) act on~\(L_h\) by
  left and right multiplication, respectively.  With these actions,
  \(L_h\) is an equivalence between \(G_{\rg(h)}\) and~\(G_{\s(h)}\).
  The inversion and multiplication in~\(L\) restrict to isomorphisms
  of groupoid equivalences \(L_h\congto L_{h^{-1}}^*\) for \(h\in
  H^1\) and \(L_{h_1}\times_{G_y} L_{h_2}\congto L_{h_1h_2}\) for
  composable \(h_1,h_2\in H^1\) and \(y\defeq \s(h_1)=\rg(h_2)\); here
  the star denotes the inverse equivalence with left and right actions
  exchanged, and~\(\times_{G_y}\) denotes the composition of
  equivalences.
\end{lemma}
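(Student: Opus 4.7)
The plan is to deduce the whole lemma from Proposition~\ref{pro:fibration_basic_action_GL} by specialising its principal-bundle structure on~\(L^1\) to each fibre~\(L_h\) of~\(F^1\). First I would check well-definedness of the two actions: if \(l\in L_h\) and \(g\in G_{\rg(h)}\) is composable with~\(l\) on the left, then \(F^1(g l)= F^1(g) F^1(l)= 1\cdot h = h\), so \(g l\in L_h\); the right action is symmetric. The identities \(F^0(\rg(l))= \rg(h)\) and \(F^0(\s(l))= \s(h)\) for \(l\in L_h\) place \(\rg(l)\) and \(\s(l)\) in the object sets \(G^0_{\rg(h)}\) and \(G^0_{\s(h)}\), respectively, so that \(G_{\rg(h)}\) and \(G_{\s(h)}\) really do act.

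For the equivalence, I would restrict the principal \(G\)-bundle \((F^1,\s)\colon L^1\onto H^1\times_{\s,H^0,F^0} L^0\) of Proposition~\ref{pro:fibration_basic_action_GL} along the inclusion \(\{h\}\times G^0_{\s(h)}\into H^1\times_{\s,H^0,F^0} L^0\). Since pull-backs of principal bundles remain principal, this identifies \(L_h\) with a principal \(G_{\rg(h)}\)-bundle over \(G^0_{\s(h)}\) via the moment map~\(\s\). The symmetric statement, obtained from the remark about \((F^1,\rg)\) after Definition~\ref{def:groupoid_fibration}, gives that \(L_h\) is a principal \(G_{\s(h)}\)-bundle over \(G^0_{\rg(h)}\) via~\(\rg\) for the right action. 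The two actions commute by associativity in~\(L\), and this packages \(L_h\) into a \((G_{\rg(h)},G_{\s(h)})\)-equivalence. The inversion claim is then immediate: \(l\mapsto l^{-1}\) is a continuous involution of~\(L^1\), sends~\(L_h\) to~\(L_{h^{-1}}\) since \(F^1(l^{-1})= F^1(l)^{-1}\), and the identity \((g l k)^{-1}= k^{-1} l^{-1} g^{-1}\) witnesses that it swaps the two actions, which is the definition of the starred inverse equivalence.

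The interesting part is the multiplication isomorphism \(L_{h_1}\times_{G_y} L_{h_2}\congto L_{h_1 h_2}\). Descent of \((l_1,l_2)\mapsto l_1 l_2\) from \(L_{h_1}\times_{G^0_y} L_{h_2}\) to the balanced product is automatic. For surjectivity, given \(l\in L_{h_1 h_2}\), the fibration condition lifts \((h_2,\s(l))\) to some \(l_2\in L_{h_2}\) with \(\s(l_2)=\s(l)\); setting \(l_1\defeq l l_2^{-1}\) gives \(F^1(l_1)=h_1\) and \(l_1 l_2=l\). For injectivity on the balanced product, if \(l_1 l_2= l_1' l_2'\), the element \(g\defeq l_1^{-1} l_1'= l_2 l_2'^{-1}\) satisfies \(F^1(g)= 1_y\) and \(\s(g)\in G^0_y\), hence \(g\in G_y^1\) and \((l_1',l_2')= (l_1 g, g^{-1} l_2)\). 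The main obstacle is upgrading this continuous bijection to a homeomorphism. I would handle this by identifying both source and target as quotients of principal \(G_y\)-bundles over the same base and invoking the principle, used already in the proof of Proposition~\ref{pro:groupoid_covering} via \cite{Meyer-Zhu:Groupoids}*{Proposition~5.9}, that an equivariant continuous bijection between principal bundles with the same base is automatically a homeomorphism.
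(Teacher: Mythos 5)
Your proposal is correct and follows the paper's proof essentially step for step: you obtain the actions and the equivalence structure on \(L_h\) by restricting the principal bundle of Proposition~\ref{pro:fibration_basic_action_GL} to \(\{h\}\times G^0_{\s(h)}\) (and its inverted version for the right action), treat the inversion isomorphism as immediate, and let the multiplication descend to the balanced product before upgrading it to a homeomorphism by an automatic-invertibility principle from \cite{Meyer-Zhu:Groupoids}. The only difference is cosmetic and lies in the final step: the paper cites \cite{Meyer-Zhu:Groupoids}*{Theorem~7.15}, by which \emph{any} equivariant continuous map between groupoid equivalences is automatically a homeomorphism, so your explicit surjectivity and injectivity checks (though valid, and surjectivity indeed uses the fibration condition exactly as you say) are not needed, whereas you verify bijectivity by hand and then invoke a principal-bundle rigidity statement of the same flavour.
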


\begin{proof}
  We have \(L_{h_1}\cdot L_{h_2} \subseteq L_{h_1h_2}\)
  for all composable \(h_1,h_2\in H^1\).
  In particular, \(L_{1_{\rg(h)}}\cdot L_h \subseteq L_h\)
  and \(L_h \cdot L_{1_{\s(h)}} \subseteq L_h\).
  Since \(G_y^1=L_{1_{y}}\) for all \(y\in H^0\), this gives commuting left
  and right actions of \(G_{\rg(h)}\) and~\(G_{\s(h)}\) on~\(L_h\), respectively.
  Their anchor maps are the restrictions of \(\rg\)
  and~\(\s\)
  to~\(L_h\),
  respectively.  The left action of~\(G_{\rg(h)}\)
  gives a principal bundle with bundle projection
  \(\s\colon L_h\onto G^0_{\s(h)}\)
  because of~\eqref{eq:fibration_principal_bundle}: we have simply
  restricted the principal \(G\)\nb-bundle
  \(L^1\onto H^1\times_{\s,H^0,F^0} L^0\)
  to \(\{h\}\times G^0_{\s(h)} \subseteq H^1\times_{\s,H^0,F^0} L^0\).
  Taking inverses everywhere gives the same statement for the right
  action of~\(G_{\s(h)}\) on~\(L_h\).

  The isomorphism \(L_h \cong L_{h^{-1}}^*\) is trivial.  The
  multiplication in~\(L^1\) restricts to a continuous map \(L_{h_1}
  \times_{\s,\rg} L_{h_2} \to L_{h_1 h_2}\) that equalises \((l_1,g
  l_2)\) and \((l_1 g, l_2)\) for all \(l_i\in L_{h_i}\), \(g\in G_y\)
  with \(\s(l_1)=\rg(g)\), \(\s(g)=\rg(l_2)\).  Hence it induces a
  continuous map \(L_{h_1} \times_{G_y} L_{h_2} \to L_{h_1 h_2}\).
  This map is equivariant with respect to the left action
  of~\(G_{\rg(h_1)}\) and the right action of~\(G_{\s(h_2)}\).  An
  equivariant, continuous map between two groupoid equivalences such
  as \(L_{h_1}\times_{G_y} L_{h_2}\) and~\(L_{h_1h_2}\) is
  automatically a homeomorphism; this follows from the statement in
  \cite{Meyer-Zhu:Groupoids}*{Theorem~7.15} that all \(2\)\nb-arrows
  in the bicategory of bibundle functors are invertible.
\end{proof}

If we equip~\(H\) with the discrete topology, then the
equivalences~\(L_h\) for \(h\in H\) with the isomorphisms of
equivalences \(L_{h_1}\times_{G_y} L_{h_2}\to L_{h_1h_2}\) for
\(y\defeq \s(h_1)=\rg(h_2)\) form an action of~\(H\) on the groupoid
\(\bigsqcup_{y\in H^0} G_y\) by equivalences,
compare~\cite{Buss-Meyer:Actions_groupoids}.  The continuity of this
action is expressed by putting a topology on \(L= \bigsqcup_{h\in H}
L_h\) such that the involution and multiplication are continuous and
the maps \(\rg,\s\colon L\onto L^0\) and~\eqref{eq:groupoid_fibration}
are open and surjective.

\subsection{Classical groupoid actions}
\label{sec:classical_action}

We define actions of one topological groupoid on another by
automorphisms and construct a transformation groupoid and a groupoid
fibration from it.  This corroborates our interpretation of groupoid
fibrations as generalised groupoid actions.

\begin{definition}
  \label{def:classical_action}
  Let \(H\) and~\(G\) be topological groupoids.  A \emph{classical
    action} of~\(H\) on~\(G\) consists of \(H\)\nb-actions on \(G^0\)
  and~\(G^1\) such that
  \begin{enumerate}
  \item \(\rg,\s\colon G^1\rightrightarrows G^0\) are
    \(H\)\nb-equivariant;
  \item the multiplication map \(G^1\times_{\s,G^0,\rg} G^1 \to G^1\)
    is \(H\)\nb-equivariant.  Here we use the diagonal action of~\(H\)
    on \(G^1\times_{\s,G^0,\rg} G^1\), which exists because \(\s\)
    and~\(\rg\) are equivariant.
  \end{enumerate}
\end{definition}

The equivariance of the multiplication in~\(G\) implies that the unit
and inversion maps are \(H\)\nb-equivariant as well.  Let
\(\rg_H\colon G^0\to H^0\) and \(\rg_H\colon G^1\to H^0\) denote the
anchor maps.  The map \(\rg_H\colon G^0\to H^0\) is \(G\)\nb-invariant
because \(\rg_H(\rg(g))=\rg_H(g) = \rg_H(\s(g))\) for all \(g\in
G^1\).

More explicitly, the \(H\)\nb-equivariance of the multiplication map
means that
\[
h\cdot (g_1\cdot g_2) = (h\cdot g_1) \cdot (h\cdot g_2)
\]
for \(h\in H^1\), \(g_1,g_2\in G^1\) with \(\s(h)=\rg_H(g_1)\),
\(\s(g_1)=\rg(g_2)\).  We check that \(h\cdot (g_1\cdot g_2)\) and
\((h\cdot g_1) \cdot (h\cdot g_2)\) are defined.  The product
\(g_1\cdot g_2\) is defined because \(\s(g_1)=\rg(g_2)\), and \(h\cdot
(g_1\cdot g_2)\) is defined because \(\s(h) = \rg_H(g_1) =
\rg_H(\rg(g_1)) = \rg_H(\rg(g_1 g_2)) = \rg_H(g_1 g_2)\).  The product
\(h\cdot g_1\) is defined because \(\s(h)=\rg_H(g_1)\), and \(h\cdot
g_2\) is defined because \(\s(h) = \rg_H(g_1) = \rg_H(\s(g_1)) =
\rg_H(\rg(g_2)) = \rg_H(g_2)\).  Since \(\s(h\cdot g_1) = h\cdot
\s(g_1) = h\cdot \rg(g_2) = \rg(h\cdot g_2)\), the product \((h\cdot
g_1) \cdot (h\cdot g_2)\) is defined.

\begin{remark}
  \label{rem:classical_action}
  Since \(\rg_H\colon G^0\to H^0\) is \(G\)\nb-invariant, the
  subspaces \(\rg_H^{-1}(y)\subseteq H^0\) are \(G\)\nb-invariant, so
  that we may restrict~\(G\) to topological subgroupoids \(G_y \defeq
  G|_{\rg_H^{-1}(y)}\) for \(y\in H^0\).  Then \(G=\bigsqcup_{y\in
    H^0} G_y\) as a set.  An arrow \(h\in H^1\) acts on~\(G^1\) by an
  isomorphism of topological groupoids \(\alpha_h\colon G^1_{\s(h)}
  \congto G^1_{\rg(h)}\), such that \(\alpha_{h_1 h_2} = \alpha_{h_1}
  \alpha_{h_2}\) for all \(h_1,h_2\in H^1\) with \(\s(h_1)=\rg(h_2)\).
  Moreover, the map \(H^1\times_{\s,H^0,\rg_H} G^1\to G^1\),
  \((h,g)\mapsto \alpha_h(g)\), is continuous.  Conversely, assume
  that we are given a decomposition \(G= \bigsqcup G_y\) through a
  \(G\)\nb-invariant continuous map \(G^0\to H^0\) and groupoid
  isomorphisms \(\alpha_h\colon G_{\s(h)} \congto G_{\rg(h)}\) for
  \(h\in H^1\), such that \(\alpha_{h_1}\alpha_{h_2} = \alpha_{h_1
    h_2}\) for composable \(h_1,h_2\in H^1\) and such that
  \(H^1\times_{\s,H^0,\rg_H} G^1\to G^1\), \((h,g)\mapsto
  \alpha_h(g)\), is continuous.  Then the map
  \(H^1\times_{\s,H^0,\rg_H} G^0\to G^0\), \((h,x)\mapsto
  \alpha_h(x)\), is continuous as well, and these actions of~\(H\) on
  \(G^0\) and~\(G^1\) form a classical action as in
  Definition~\ref{def:classical_action}.  Thus a classical action
  of~\(H\) on~\(G\) is the same as a decomposition of~\(G\) into a
  bundle of topological groupoids over~\(H^0\) (that is, a continuous,
  \(G\)\nb-equivariant map \(\rg_H\colon G^0\to H^0\)) and an action
  of~\(H\) by topological groupoid isomorphisms between the fibres of
  this bundle, such that the induced action on~\(G^1\) is continuous.

  In particular, if~\(H\) is a topological group, then a classical
  action is a group homomorphism from~\(H\) to the group of groupoid
  automorphisms of~\(G\) that is continuous in the sense that the
  action map \(H^1\times G^1\to G^1\) is continuous (this implies the
  continuity of \(H^1\times G^0\to G^0\)).  Actions of this type are
  used in \cites{Kaliszewski-Muhly-Quigg-Williams:Coactions_Fell,
    Kaliszewski-Muhly-Quigg-Williams:Fell_bundles_and_imprimitivity_theoremsI,
    Kaliszewski-Muhly-Quigg-Williams:Fell_bundles_and_imprimitivity_theoremsII,
    Kaliszewski-Muhly-Quigg-Williams:Fell_bundles_and_imprimitivity_theoremsIII}.
\end{remark}

Now we build a \emph{transformation groupoid}~\(L\) for a classical
action of~\(H\) on~\(G\); this may also be called a semidirect product
groupoid because that is what it is for an action of a topological
group on another topological group.  Let \(L^0 \defeq G^0\) and
\(L^1\defeq H^1\times_{\s,H^0,\rg_H} G^1\).  Define \(\s,\rg\colon
L^1\rightrightarrows L^0\) by \(\s(h,g) = \s(g)\), \(\rg(h,g)=h
\cdot\rg(g)\) for \(h\in H^1\), \(g\in G^1\) with \(\s(h) =
\rg_H(g)\).  Define the multiplication by
\[
(h_1,g_1)\cdot (h_2,g_2) \defeq
(h_1\cdot h_2, (h_2^{-1}\cdot g_1)\cdot g_2)
\]
for \(h_1,h_2\in H^1\), \(g_1,g_2\in G^1\) with
\(\s(h_1)=\rg_H(g_1)\), \(\s(h_2)=\rg_H(g_2)\), \(\s(g_1)= h_2\cdot
\rg(g_2)\).

\begin{lemma}
  \label{lem:classical_action_trafo}
  The data above defines a topological groupoid.
\end{lemma}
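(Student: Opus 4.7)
The plan is to verify the groupoid axioms for the proposed structure $(L^0,L^1,s_L,r_L,\mathrm{mult},u,\mathrm{inv})$ in turn. Most of the work reduces to routine bookkeeping that combines the equivariance axioms of the classical action with the groupoid laws in~$G$ and~$H$; the one genuinely topological point is the openness of the range map.

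First I would dispose of the source and range. The source $s_L(h,g)=\s_G(g)$ factors as the projection $L^1\to G^1$ followed by~$\s_G$, both of which are continuous and open (the first as a pull-back of the open map $\s_H\colon H^1\to H^0$). For the range $r_L(h,g)=h\cdot \rg_G(g)$, I would factor it as
\[
L^1\xrightarrow{(h,g)\mapsto(h,\rg_G(g))} H^1\times_{\s,H^0,\rg_H}G^0\xrightarrow{\text{act}} G^0,
\]
where the first map is open as a pull-back of the open map~$\rg_G$. For the action map I would use the ``shear'' homeomorphism $(h,x)\mapsto(h,h\cdot x)$ from $H^1\times_{\s,H^0,\rg_H}G^0$ onto $H^1\times_{\rg,H^0,\rg_H}G^0$, after which projection to~$G^0$ is open.

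Next I would check well-definedness of multiplication. The hypothesis $s_L(h_1,g_1)=r_L(h_2,g_2)$ unpacks as $\s_G(g_1)=h_2\cdot \rg_G(g_2)$; applying $\rg_H$ gives $\s(h_1)=\rg(h_2)$, so $h_1h_2$ exists, and equivariance of~$\s_G$ yields $\s_G(h_2^{-1}\cdot g_1)=\rg_G(g_2)$, so $(h_2^{-1}\cdot g_1)\cdot g_2$ is defined in~$G$. Checking that the result again lies in~$L^1$ (i.e.\ $\rg_H$ of the $G$-component equals $\s(h_1h_2)$) is a one-line computation. Continuity of the resulting map $L^1\times_{s_L,L^0,r_L}L^1\to L^1$ follows from continuity of the classical action and of multiplication and inversion in~$H$ and~$G$.

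The unit is $u(x)\defeq(1_{\rg_H(x)},1_x)$ and the inverse is $(h,g)^{-1}\defeq(h^{-1},h\cdot g^{-1})$, both verified by short direct calculations that use equivariance of the unit and inversion maps of~$G$. Associativity is a brief computation using $\alpha_{h_1h_2}=\alpha_{h_1}\alpha_{h_2}$ and associativity in~$G$: both iterated products reduce to $(h_1h_2h_3,(h_3^{-1}h_2^{-1}\cdot g_1)(h_3^{-1}\cdot g_2)g_3)$. No individual step is conceptually difficult; the main obstacle is to organise the bookkeeping so that every condition to verify unpacks into a transparent identity, in particular to track which subgroupoid~$G_y$ each element of~$G$ belongs to at each stage.
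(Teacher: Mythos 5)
Your proposal is correct and follows essentially the same route as the paper: direct verification of the algebraic axioms (with the same computations for well-definedness of the multiplication, units, inverses and associativity) plus openness of the source map as a composite of the coordinate projection $L^1\to G^1$ (a pull-back of $\s\colon H^1\onto H^0$) with $\s_G$. The only cosmetic difference is that you prove openness of the range map directly via the shear homeomorphism, whereas the paper deduces it from openness of the source map together with the inversion being a homeomorphism.
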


\begin{proof}
  The range map is well defined because \(\rg\colon G^1\to G^0\) is
  \(H\)\nb-equivariant.  We check that the multiplication is well
  defined.  Let \(h_1,h_2\in H^1\), \(g_1,g_2\in G^1\) satisfy
  \(\s(h_1)=\rg_H(g_1)\), \(\s(h_2)=\rg_H(g_2)\), \(\s(g_1)= h_2\cdot
  \rg(g_2)\).  Then \(\s(h_1) = \rg_H(g_1) = \rg_H(\s(g_1)) =
  \rg_H(h_2\cdot \rg(g_2)) = \rg(h_2) = \s(h_2^{-1})\), so
  \(h_2^{-1}\cdot g_1\) and \(h_1\cdot h_2\) are defined.  And
  \(\s(h_2^{-1}\cdot g_1) = h_2^{-1} \cdot \s(g_1) = \rg(g_2)\), so
  \((h_2^{-1}\cdot g_1)\cdot g_2\) is defined.  And
  \(\rg_H((h_2^{-1}\cdot g_1)\cdot g_2) = \rg_H(h_2^{-1}\cdot
  (g_1\cdot (h_2\cdot g_2))) = \rg(h_2^{-1}) = \s(h_1 h_2)\), so
  \((h_1\cdot h_2, (h_2^{-1}\cdot g_1)\cdot g_2) \in L^1\).

  Direct computations give the following:
  \begin{itemize}
  \item \(\s\bigl((h_1,g_1)\cdot (h_2,g_2)\bigr) = \s(g_2) =
    \s(h_2,g_2)\)
  \item \(\rg\bigl((h_1,g_1)\cdot (h_2,g_2)\bigr) = h_1\cdot \rg(g_1)
    = \rg(h_1,g_1)\);
  \item the arrow \((1_{\rg_H(x)},1_x)\) for \(x\in G^0\) is the unit
    arrow on~\(x\);
  \item the arrow \((h^{-1},h\cdot g^{-1})\) is inverse to~\((h,g)\)
    for \(h\in H^1\), \(g\in G^1\) with \(\s(h)=\rg_H(g)\), and the
    inversion map is a homeomorphism;
  \item the multiplication is associative.
  \end{itemize}
  The source map of~\(L\) is an open surjection as the product of two
  open surjections, namely, \(\s\colon G^1\onto G^0 = L^0\) and the
  coordinate projection \(L^1 = H^1\times_{\s,H^0,\rg_H} G^1 \onto
  G^1\), which is the pull-back of the open surjection \(\s\colon
  H^1\onto H^0\) along \(\rg_H\colon G^1\to H^0\).
\end{proof}

\begin{proposition}
  \label{pro:classical_action_fibration}
  The map \(\rg_H\colon L^0 = G^0\to H^0\) on objects and the
  coordinate projection \(\pr_1\colon L^1 = H^1\times_{\s,H^0,\rg_H}
  G^1 \to H^1\) on arrows define a continuous functor \(F\colon L\to
  H\), which is a groupoid fibration.
\end{proposition}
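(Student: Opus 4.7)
The plan is to verify the two requirements of Definition~\ref{def:groupoid_fibration} in turn: first that $F$ is a continuous functor, then that $(F^1,\s_L)$ is an open surjection.

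For the functoriality, the continuity of $F^0=\rg_H$ is part of the data of a classical action, and $F^1=\pr_1$ is continuous as a coordinate projection. I would then check that $F$ intertwines source and range by direct computation: for $(h,g)\in L^1$, $F^0(\s_L(h,g))=\rg_H(\s(g))=\rg_H(g)=\s(h)=\s_H(F^1(h,g))$, and $F^0(\rg_L(h,g))=\rg_H(h\cdot \rg(g))=\rg(h)=\rg_H(F^1(h,g))$, where the middle equality uses that $\rg_H$ is the anchor for the $H$\nb-action on $G^0$. Multiplicativity is built into the definition of the product on $L^1$: $F^1((h_1,g_1)(h_2,g_2))=F^1(h_1h_2,(h_2^{-1}\cdot g_1)g_2)=h_1h_2=F^1(h_1,g_1)F^1(h_2,g_2)$.

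For the fibration condition, I would unwind the map in question. Using $F^0=\rg_H$ and $\s_L(h,g)=\s(g)$, the map
\[
(F^1,\s_L)\colon L^1=H^1\times_{\s,H^0,\rg_H} G^1 \to H^1\times_{\s,H^0,\rg_H} L^0
\]
is given by $(h,g)\mapsto (h,\s(g))$. I would recognise this as precisely the pull\nb-back of the source map $\s\colon G^1\to G^0$ along the projection $H^1\times_{\s,H^0,\rg_H} G^0\to G^0$: indeed, an element of the pull\nb-back is a pair $((h,x),g)$ with $x=\s(g)$ and $\s(h)=\rg_H(x)$, and the condition $\s(h)=\rg_H(\s(g))=\rg_H(g)$ is exactly the fibre product condition defining $L^1$.

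Since $\s\colon G^1\to G^0$ is an open surjection by our standing assumption on topological groupoids, and pull\nb-backs of open surjections are again open surjections (as already used in the proof of Lemma~\ref{lem:fibre_top_groupoid}), the map $(F^1,\s_L)$ is an open surjection. I do not expect any real obstacle here: everything has been set up so that $L$ is essentially the pull\nb-back of $G$ along the anchor map $\rg_H$, and the fibration property reduces to stability of open surjections under pull\nb-back. The only minor bookkeeping concerns keeping the various fibre products straight, which is a routine check.
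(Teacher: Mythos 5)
Your proposal is correct and follows essentially the same route as the paper: the functoriality is a direct computation, and the fibration condition is verified by identifying \((F^1,\s)\) with the map \((h,g)\mapsto (h,\s(g))\), which is the pull-back of the open surjection \(\s\colon G^1\onto G^0\) along \(\pr_2\colon H^1\times_{\s,H^0,\rg_H} G^0\to G^0\). The paper's proof is just a terser version of the same argument, so no further comparison is needed.
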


\begin{proof}
  Direct computations show that~\(F\) is a functor.  The map
  \((F^1,\s)\colon L^1 \to H^1\times_{\s,H^0,F^0} L^0\)
  in~\eqref{eq:groupoid_fibration} is equivalent to the map
  \(H^1\times_{\s,H^0,\rg_H} G^1 \to H^1\times_{\s,H^0,\rg_H} G^0\),
  \((h,g)\mapsto (h,\s(g))\).  This is equivalent to the pull-back of
  the open surjection \(\s\colon G^1\onto G^0\) along the map
  \(\pr_2\colon H^1\times_{\s,H^0,\rg_H} G^0 \to G^0\).  Hence
  \((F^1,\s)\) is an open surjection.
\end{proof}

\begin{example}
  \label{exa:bitransformation_groupoid}
  Let~\(X\) be a topological space with commuting left and right
  actions of topological groupoids \(G\) and~\(H\).  Then~\(H\) acts
  on the transformation groupoid \(G\ltimes X\) on the right by the
  given action on objects \(x\in X\) and by \((g,x)\cdot h \defeq (g,
  x\cdot h)\) for \((g,x)\in (G\ltimes X)^1\), \(h\in H^1\) with
  \(\s(x) = \s(g,x) = \rg(h)\).  The transformation groupoid for this
  classical action of \(H\) on~\(G\ltimes X\) is the bi-transformation
  groupoid \(G\ltimes X\rtimes H\), which has unit space~\(X\), arrow
  space \(G\times_{\s,\rg}X\times_{\s,\rg}H\), \(\rg(g,x,h) \defeq
  g\cdot x\), \(\s(g,x,h) = x\cdot h^{-1}\), and \((g_1,x_1,h_1)\cdot
  (g_2,x_2,h_2) \defeq (g_1\cdot g_2,x,h_1 h_2)\) with \(x= g_2^{-1}
  x_1 = x_2 h_1\) for \(g_1,g_2\in G^1\), \(x_1,x_2\in X\),
  \(h_1,h_2\in H\) with \(\s(g_i) =\rg(x_i)\), \(\s(x_i)=\rg(h_i)\)
  for \(i=1,2\) and \(x_1\cdot h_1^{-1} = g_2\cdot x_2\).  The
  canonical functor \(F\colon G\ltimes X\rtimes H\to H\) is the anchor
  map \(\s\colon X\to H^0\) on objects and the coordinate projection
  \(\pr_3\colon \colon G\times_{\s,\rg}X\times_{\s,\rg}H \to H\) on
  arrows.  This is a groupoid fibration by
  Proposition~\ref{pro:classical_action_fibration}.
\end{example}

We may characterise exactly which groupoid fibrations come from
classical actions as above.  The transformation groupoid \(L= H\ltimes
G\) for a classical action comes with a canonical action of~\(H\) on
\(L^1 \defeq H^1\times_{\s,H^0,\rg_H} G^1\) by \(h_1\cdot (h_2,g)
\defeq (h_1\cdot h_2,g)\), with anchor map \(\rg_H\colon L^1\to H^0\),
\((h,g)\mapsto \rg(h)\).  This action commutes with the action
of~\(L^1\) on itself by right multiplication.  Thus it is an
\emph{actor} from~\(H\) to~\(L\) as in
\cite{Meyer-Zhu:Groupoids}*{Definition~4.20}.  The action of~\(H\)
on~\(L^1\) makes~\(F^1\) equivariant: \(F^1(h\cdot l) = h\cdot
F^1(l)\) for all \(h\in H^1\), \(l\in L^1\) with \(\s(h) = \rg_H(l)\).
There is, however, no canonical functor \(H\to H\ltimes G\)
unless~\(F^0\) is a homeomorphism because we lack a canonical map
\(H^0\to L^0\).

\begin{proposition}
  \label{pro:fibration=classical}
  A groupoid fibration \(F\colon L\to H\) with fibre~\(G\) comes from
  a classical action of~\(H\) on~\(G\) if and only if there is an
  actor from~\(H\) to~\(L\) that makes~\(F^1\) \(H\)\nb-equivariant.
\end{proposition}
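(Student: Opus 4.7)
The \emph{only if} direction is essentially given in the paragraphs preceding the proposition: for \(L=H\ltimes G\) built from a classical action the formula \(h\cdot(h',g)\defeq(h\cdot h',g)\) is an actor that commutes with right multiplication in~\(L\), and \(F^1=\pr_1\) is tautologically equivariant.

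For the \emph{if} direction, assume an actor \(H\curvearrowright L^1\) making \(F^1\) equivariant. Equivariance forces the anchor to be \(\rg\circ F^1=F^0\circ\rg\colon L^1\to H^0\). The cornerstone of the construction is the continuous map \(c_h(x)\defeq h\cdot 1_x\in L^1\) defined whenever \(\s(h)=F^0(x)\); equivariance yields \(F^1(c_h(x))=h\cdot 1_{\s(h)}=h\), so \(c_h(x)\in L_h\) is an arrow from~\(x\) to some point \(h\ast x\defeq\rg(c_h(x))\in L^0\). I would then define the two actions that a classical action requires as
\[
h\ast x\defeq\rg(c_h(x)), \qquad h\ast g\defeq c_h(\rg(g))\cdot g\cdot c_h(\s(g))^{-1}\text{ in }L^1,
\]
the latter being defined whenever \(\s(h)=F^0(\rg(g))=F^0(\s(g))\). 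A short computation shows \(F^1(h\ast g)=h\cdot 1_{\s(h)}\cdot h^{-1}=1_{\rg(h)}\), so \(h\ast g\in G^1\).

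The verification of the action axioms rests on the \emph{cocycle identity}
\[
c_{h_1h_2}(x)=c_{h_1}(h_2\ast x)\cdot c_{h_2}(x),
\]
which I would prove by applying the actor axiom to \(1_x\) and using the commutation with right multiplication: \(c_{h_1}(h_2\ast x)\cdot c_{h_2}(x)=h_1\cdot\bigl(1_{h_2\ast x}\cdot c_{h_2}(x)\bigr)=h_1\cdot c_{h_2}(x)=(h_1h_2)\cdot 1_x\). From this identity I would deduce \((h_1h_2)\ast x=h_1\ast(h_2\ast x)\) and \((h_1h_2)\ast g=h_1\ast(h_2\ast g)\); equivariance of \(\rg,\s\) is immediate; equivariance of multiplication in~\(G\) reduces to \(c_h(\s(g_1))^{-1}c_h(\rg(g_2))=1_{\s(g_1)}\), since \(\s(g_1)=\rg(g_2)\); continuity follows by composition. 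A companion fact needed for the action property on~\(L^0\) is that \(\rg(h\cdot l)\) depends only on \(\rg(l)\): if \(\rg(l_1)=\rg(l_2)\) then \(l_1=l_2\cdot(l_2^{-1}l_1)\) and commutation with right multiplication gives \(\rg(h\cdot l_1)=\rg(h\cdot l_2)\).

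Finally I would construct the isomorphism \(\Phi\colon H\ltimes G\congto L\) over~\(H\) by the identity on objects and
\[
\Phi(h,g)\defeq c_h(\rg(g))\cdot g\quad\text{on arrows,}
\]
with inverse \(l\mapsto\bigl(F^1(l),c_{F^1(l)^{-1}}(\rg(l))\cdot l\bigr)\) (the second component lies in~\(G^1\) by the same \(F^1\)-computation as above, and one uses \(c_{h^{-1}}(h\ast x)=c_h(x)^{-1}\), a consequence of the cocycle identity). The identity \(F\circ\Phi=\pr_1\) is immediate from \(F^1(c_h(\rg(g))\cdot g)=h\cdot 1_{\s(h)}=h\). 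The main obstacle I expect is checking multiplicativity of~\(\Phi\): expanding \(\Phi((h_1,g_1)(h_2,g_2))\) and \(\Phi(h_1,g_1)\Phi(h_2,g_2)\) requires the cocycle identity together with the definition \(h\ast g=c_h(\rg(g))\cdot g\cdot c_h(\s(g))^{-1}\) to rewrite the twisted multiplication of \(H\ltimes G\) as a genuine product in~\(L\); once this is done, bicontinuity of \(\Phi\) follows because all the maps involved are continuous.
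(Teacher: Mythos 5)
Your plan is correct and follows essentially the same route as the paper: since \(c_h(\rg(g))\cdot g = (h\bullet 1_{\rg(g)})\cdot g = h\bullet g\) by the actor axiom, your map \(\Phi\) and its inverse \(l\mapsto (F^1(l),F^1(l)^{-1}\bullet l)\) are exactly the identification \(L^1\cong H^1\times_{\s,H^0,F^0\circ\rg}G^1\) used in the paper, and your conjugation formula for \(h\ast g\) and \(h\ast x=\rg(h\bullet 1_x)\) are the paper's definitions of the classical action. Your explicit cocycle identity for \(c_h\) just spells out what the paper handles by transporting the multiplication of \(L\) and reversing the computations of Lemma~\ref{lem:classical_action_trafo}.
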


\begin{proof}
  We have already described the actor from~\(H\) to~\(L\) for a
  classical action.  Conversely, assume that an actor from~\(H\)
  to~\(L\) is given that makes~\(F^1\) \(H\)\nb-equivariant.  Hence
  \(\rg_H(l) = \rg(F^1(l)) = F^0(\rg(l))\).  Write~\(\bullet\) for the
  left \(H\)\nb-action on~\(L^1\) to distinguish it from the
  multiplication in~\(L\).  We claim that the maps
  \(H^1\times_{\s,H^0,F^0\circ\rg} G^1\to L^1\), \((h,g)\mapsto
  h\bullet g\), and \(L^1 \to H^1\times_{\s,H^0,F^0\circ\rg} G^1\),
  \(l\mapsto (F^1(l),F^1(l)^{-1}\bullet l)\), are well defined,
  continuous and inverse to each other.  Hence they are both
  homeomorphisms.  The first map is clearly well defined, and the
  second map is well defined because the anchor map \(L^1\to H^0\) is
  \(F^0\circ \rg\) and \(F^1(F^1(l)^{-1}\bullet l) = F^1(l)^{-1}\cdot
  F^1(l) = 1_{F^0(\s(l))}\), that is, \(F^1(l)^{-1}\bullet l\) belongs
  to~\(G^1\).  Both maps are clearly continuous.  They are inverse to
  each other because \(F^1(h\cdot g) = h\cdot F^1(g) = h\) for all
  \((h,g)\in H^1\times_{\s,H^0,F^0\circ\rg} G^1\).

  Identify \(L^1 \cong H^1\times_{\s,H^0,F^0\circ\rg} G^1\) as above.
  The multiplication in~\(L\) must satisfy
  \begin{multline*}
    (h_1,g_1)\cdot (h_2,g_2)
    = (h_1 \bullet g_1) \cdot (h_2\bullet g_2)
    = (h_1 h_2) \bullet (h_2^{-1} \bullet g_1) \cdot
    (h_2\bullet (1_{\s(g_1)}\cdot g_2))
    \\= (h_1 h_2) \bullet (h_2^{-1} \bullet 1_{\rg(g_1)}) \cdot
    g_1\cdot (h_2\bullet 1_{\s(g_1)}) \cdot g_2.
  \end{multline*}
  Since \(F^1\) is \(H\)\nb-equivariant, \(F^1(h_2^{-1}\bullet
  1_{\rg(g_1)}) = F^1(h_2)^{-1}\) and \(F^1(h_2\bullet 1_{\s(g_1)}) =
  F^1(h_2)\), so that the product \((h_2^{-1} \bullet 1_{\rg(g_1)})
  \cdot g_1\cdot (h_2\bullet 1_{\s(g_1)})\) belongs to~\(G^1\).
  Since~\(F^1\) is \(H\)\nb-equivariant, the multiplication in~\(L\)
  becomes \((h_1,g_1) \cdot (h_2, g_2) = (h_1 h_2, (h_2^{-1}\cdot
  g_1)\cdot g_2)\) with
  \[
  h\cdot g \defeq
  (h\bullet 1_{\rg(g)})\cdot g\cdot (h^{-1}\bullet 1_{\s(g)}).
  \]
  This is exactly as in the transformation groupoid for a classical
  action.  We also define an action of~\(H\) on~\(G^0\) by \(h\cdot x
  = \rg(h\cdot 1_x)\) as in the proof of
  \cite{Meyer-Zhu:Groupoids}*{Proposition~4.21}.  Reversing the
  computations in the proof of Lemma~\ref{lem:classical_action_trafo},
  we see that the formulas above must define a classical action
  of~\(H\) on~\(G\) by automorphisms because~\(L\) is a groupoid.
\end{proof}

\subsection{Translation action on the arrow space}
\label{sec:translation_arrows}

A motivating example in~\cite{Buss-Meyer:Actions_groupoids} is to
associate an action on a \(\Cst\)\nb-algebra to the translation action
of a locally Hausdorff, locally compact groupoid~\(H\) on its
arrow space~\(H^1\).  Since~\(H^1\) is non-Hausdorff, we cannot use
the commutative \(\Cst\)\nb-algebra of \(\Cont_0\)\nb-functions
on~\(H^1\).  Instead, we cover~\(H^1\) by Hausdorff, open subsets and
form the resulting \v{C}ech groupoid~\(G\).  It should carry an action
of~\(H\), which then induces an action of~\(H\) on the groupoid
\(\Cst\)\nb-algebra~\(\Cst(G)\).  This is accomplished
in~\cite{Buss-Meyer:Actions_groupoids} for étale groupoids.  Groupoid
fibrations allow to do the same for arbitrary locally Hausdorff
groupoids (the \(\Cst\)\nb-algebraic assertions also need a Hausdorff,
locally compact object space, a locally compact arrow space, and
a Haar system).  We construct the relevant example at the end of this
section, based on some simpler examples and a proposition on the
composition of groupoid fibrations.

\begin{example}
  \label{exa:hypercovers}
  Let~\(H\) be a topological groupoid and \(p\colon X\onto H^0\) an
  open, continuous surjection.  The \emph{pull-back~\(p^*(H)\) of~\(H\)
    along~\(p\)} is the topological groupoid with object space
  \(p^*(H)^0\defeq X\), arrow space
  \[
  p^*(H)^1\defeq
  X\times_{p,H^0,\rg}H^1\times_{\s,H^0,p}X=\{(x,h,y)\mid
  p(x)=\rg(h),\ p(y)=\s(h)\},
  \]
  \(\s(x,h,y)=y\), \(\rg(x,h,y)=x\), and \((x,h,y)\cdot
  (y,h',z)=(x,hh',z)\) (see
  \cite{Meyer-Zhu:Groupoids}*{Example~3.13}).  If~\(H\) is a space
  viewed as a groupoid with only
  identity arrows, this gives the \emph{\v{C}ech groupoid}~\(p^*(Y)\)
  of~\(p\), which has unit space~\(X\) and arrow space
  \(X\times_{p,Y,p}X = \{(x,x')\mid p(x)=p(y)\}\).

  The maps \(F^0\defeq p\) on objects and \(F^1(x,h,y)\defeq h\) on
  arrows give a functor \(F\colon p^*(H)\to H\) (see
  \cite{Meyer-Zhu:Groupoids}*{Example~3.18}).  It is always a groupoid
  fibration: identifying \(H^1 \times_{\s,H^0,F^0} p^*(H)^0 = H^1
  \times_{\s,H^0,p} X\), the map~\((F^1,\s)\)
  in~\eqref{eq:groupoid_fibration} becomes the coordinate projection
  \(X\times_{p,H^0,\rg} H^1 \times_{\s,H^0,p} X \onto H^1
  \times_{\s,H^0,p} X\), which is an open surjection because it is the
  pull-back of the open surjection~\(p\) along \(H^1
  \times_{\s,H^0,p} X\to H^0\), \((h,x)\mapsto \rg(h)\).  The fibre
  of~\(F\) is the \v{C}ech groupoid~\(p^*(H^0)\) of~\(p\).
\end{example}

\begin{lemma}
  \label{lem:fibration_to_space}
  Any functor from a topological groupoid~\(L\) to a space~\(Y\) is a
  groupoid fibration with fibre~\(L\).  It is a groupoid covering if
  and only if~\(L\) is a space viewed as a groupoid.
\end{lemma}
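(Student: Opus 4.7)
The plan is to unpack the fibration condition in the special case where the codomain is a space. Since $Y$ is a space viewed as a groupoid, its source and range maps are the identity on $Y$. Functoriality of $F\colon L\to Y$ then forces $F^1 = F^0\circ \s = F^0\circ\rg$ (in particular, $F^0$ must be $L$-invariant), and the unit map on $Y$ is the identity. So we first record that specifying a continuous functor $L\to Y$ is the same as specifying an $L$-invariant continuous map $F^0\colon L^0\to Y$.

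Next, I would analyze the map $(F^1,\s)$ from~\eqref{eq:groupoid_fibration}. Because $\s_Y = \mathrm{id}_Y$, the fibre product is
\[
Y\times_{\s_Y,Y,F^0} L^0 = \{(y,x)\in Y\times L^0 \mid y=F^0(x)\},
\]
which is the graph of $F^0$ and is therefore homeomorphic to $L^0$ via the second projection $(y,x)\mapsto x$. Under this identification, $(F^1,\s)$ becomes simply $\s\colon L^1\to L^0$. Since the source map of any topological groupoid is, by our standing convention, an open continuous surjection, $(F^1,\s)$ is an open surjection, so $F$ is a groupoid fibration. The fibre is computed directly: for $l\in L^1$ one has $F^1(l)=F^0(\s(l))=1_{F^0(\s(l))}$ in $Y$, so every arrow of $L$ lies in $G^1$; hence $G=L$.

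For the second assertion, $F$ is a groupoid covering iff $(F^1,\s)$ is a homeomorphism, which, after the identification above, is equivalent to $\s\colon L^1\to L^0$ being a homeomorphism. If $\s$ is a homeomorphism, then for each $x\in L^0$ the only arrow with source $x$ is the unit $1_x$, so $L^1=L^0$ consists only of identity arrows; conversely, if $L$ is a space viewed as a groupoid, then $\s=\mathrm{id}$ is trivially a homeomorphism. (Alternatively, one can invoke Proposition~\ref{pro:groupoid_covering}: a fibration is a covering iff its fibre has only identity arrows, and the fibre here is $L$ itself.)

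There is no real obstacle — the content of the lemma is just that, for codomain a space, the fibration axiom collapses to the defining requirement that $\s\colon L^1\onto L^0$ be an open surjection. The only point requiring a moment of care is correctly identifying the fibre product $Y\times_{Y,F^0}L^0$ with $L^0$ so that $(F^1,\s)$ becomes $\s$.
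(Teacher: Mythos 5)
Your proof is correct and follows essentially the same route as the paper: identify the fibre product $Y\times_{\s,Y,F^0}L^0$ with $L^0$ so that the map in~\eqref{eq:groupoid_fibration} becomes $\s\colon L^1\onto L^0$, which is an open surjection by the standing assumption, and note that it is a homeomorphism precisely when $L$ is a space. The fibre computation ($F^1(l)$ is always a unit, so $G=L$) also matches the paper's observation that $Y^0=Y^1$.
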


\begin{proof}
  A functor \(F\colon L\to Y\) is equivalent to a continuous map
  \(F\colon L^0\to Y\) that is \(L\)\nb-invariant, that is,
  \(f(\s(l))=f(\rg(l))\) for all \(l\in L^1\).  There is a
  homeomorphism \(Y\times_{\Id,f} L^0\congto L^0\), \((y,x)\mapsto
  x\), which identifies the map in~\eqref{eq:groupoid_fibration} with
  the source map \(\s\colon L^1\onto L^0\).  Since we assume~\(\s\) to
  be an open surjection, \(F\) is a groupoid fibration automatically.
  It is a groupoid covering if and only if \(\s\colon L^1\onto L^0\)
  is a homeomorphism, if and only if~\(L\) is a space viewed as a
  groupoid.  The fibre of~\(F\) is all of~\(L\) because \(Y^0=Y^1\).
\end{proof}

\begin{remark}
  A functor from a space~\(X\) to a topological groupoid~\(H\) is a
  fibration only if the relevant part of~\(H\) is a space viewed as a
  groupoid.  More precisely, such a functor is the same as a
  continuous map \(f\colon X\to H^0\), and this is a fibration if and
  only if any arrow in~\(H\) with source or range in the image
  of~\(f\) is an identity arrow.
\end{remark}

\begin{proposition}
  \label{pro:compose_fibrations}
  Let \(F_i\colon L_i\to H_i\) be groupoid fibrations with
  fibre~\(G_i\) for \(i=1,2\) with \(H_1=L_2\).  Let \(F\defeq
  F_2\circ F_1\colon L_1 \to H_1=L_2\to H_2\).  Then~\(F\) is a
  groupoid fibration as well; let~\(G\) be its fibre.  The
  functor~\(F_1\) restricts to a functor \(G\to G_2\), which is a
  groupoid fibration with fibre~\(G_1\).  If~\(F_1\) is a groupoid
  covering, then \(F|_G\colon G\to G_2\) is a groupoid covering.
  If~\(F_2\) is a groupoid covering, then \(G_1=G\).
\end{proposition}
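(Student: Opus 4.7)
The plan is to verify the four claims in turn, each reducing to a pull-back or restriction argument that exploits the stability of open surjections under pull-back.

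For the first claim, I would factor the map
\[
(F^1,\s)\colon L_1^1\to H_2^1\times_{\s,H_2^0,F^0} L_1^0
\]
as $\phi\circ(F_1^1,\s)$, where $(F_1^1,\s)\colon L_1^1 \to H_1^1\times_{\s,H_1^0,F_1^0} L_1^0$ is already an open surjection because $F_1$ is a fibration, and where $\phi(h,x) = (F_2^1(h),x)$. The key observation is that $\phi$ is the pull-back of the open surjection $(F_2^1,\s)\colon L_2^1\to H_2^1\times_{\s,H_2^0,F_2^0} L_2^0$ along the continuous map $(h,x)\mapsto (h,F_1^0(x))$ from $H_2^1\times_{\s,H_2^0,F^0} L_1^0$ to $H_2^1\times_{\s,H_2^0,F_2^0} L_2^0$; the universal property identifies this pull-back with $H_1^1\times_{\s,H_1^0,F_1^0} L_1^0$ in such a way that the projection becomes $\phi$. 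Hence $\phi$, and therefore also $(F^1,\s)$, is an open surjection, so $F$ is a groupoid fibration.

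For the second claim, I would observe that $l\in G^1$ iff $F_2^1(F_1^1(l))$ is a unit arrow; since $F_1$ is a functor, this is equivalent to $F_1^1(l)\in G_2^1$. Hence $F_1$ restricts to a functor $F_1|_G\colon G\to G_2$. To see that this is again a fibration, note that the map $((F_1|_G)^1,\s)\colon G^1\to G_2^1\times_{\s,G_2^0,F_1^0} G^0$ is the pull-back of $(F_1^1,\s)\colon L_1^1\to H_1^1\times_{\s,H_1^0,F_1^0} L_1^0$ along the inclusion $G_2^1\times_{\s,G_2^0,F_1^0} L_1^0\hookrightarrow H_1^1\times_{\s,H_1^0,F_1^0} L_1^0$; indeed, the pull-back is $\{l\in L_1^1\mid F_1^1(l)\in G_2^1\}=G^1$. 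Pulling back an open surjection yields an open surjection, so $F_1|_G$ is a fibration. Its fibre consists of those $g\in G^1$ with $F_1^1(g)=1_{F_1^0(\s(g))}$, which is exactly $G_1^1$ (and any such $g$ automatically lies in $G^1$).

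For the remaining two claims, if $F_1$ is a covering, then $(F_1^1,\s)$ is a homeomorphism, and its restriction to the preimage of any subset is still a homeomorphism, so $((F_1|_G)^1,\s)$ is a homeomorphism and $F_1|_G$ is a covering. If $F_2$ is a covering, then $G_2$ has only identity arrows by Proposition~\ref{pro:groupoid_covering}, so the condition $F_1^1(l)\in G_2^1$ forces $F_1^1(l)=1_{F_1^0(\s(l))}$, i.e., $l\in G_1^1$; hence $G=G_1$. The main step is the first claim: the factorisation and the identification of $\phi$ as a pull-back require careful bookkeeping of the various fibre products, whereas the remaining parts are essentially formal once the right pull-back squares are in place.
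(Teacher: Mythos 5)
Your proof is correct and follows essentially the same route as the paper: the same factorisation of \((F^1,\s)\) through the pull-back of \((F_2^1,\s)\), and the same identification of \(G^1\) with a preimage (a pull-back) to see that \(F_1|_G\) is a fibration with fibre \(G_1\). The only difference is cosmetic: for the two covering statements you argue directly (restriction of a homeomorphism, respectively \(G_2\) having only identity arrows), where the paper instead cites Proposition~\ref{pro:groupoid_covering} and Lemma~\ref{lem:fibration_to_space}; both versions are fine.
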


\begin{proof}
  We have assumed that the maps \((F_i^1,s_{L_i})\colon L_i^1 \onto
  H_i^1 \times_{s_{H_i},H_i^0,F_i^0} L_i^0\) for \(i=1,2\) are open
  surjections.  The pull-back of the open
  surjection~\((F_2^1,s_{L_2})\) along the map \(F_1^0\colon
  L_1^0\to H_1^0=L_2^0\) remains an open surjection.  The
  homeomorphism
  \[
  (H_2^1 \times_{s_{H_2},H_2^0,F_2^0} L_2^0)
  \times_{\pr_2,L_2^0,F_1^0} L_1^0 \congto
  H_2^1 \times_{s_{H_2},H_2^0,F_2^0\circ F_1^0} L_1^0
  \]
  identifies this pull-back with the map
  \[
  H_1^1\times_{s,F_1^0} L_1^0 = L_2^1\times_{s,F_1^0} L_1^0\onto
  H_2^1\times_{s,F^0} L_1^0,\qquad
  (g,x)\mapsto (F_2^1(g),x).
  \]
  Composing with \((F_1^1,s_{L_1})\)
  gives \((F,s)\colon L_1^1\onto H_2^1\times_{s,F_1^0} L_1^0\).
  So~\(F\) is a groupoid fibration.

  Since \(F_2^1\circ F_1^1(g)\) is a unit if and only if
  \(F_1^1(g)\) belongs to the fibre~\(G_2\) of~\(F_2\), the preimage
  of \(G_2\times_{s,F_1^0} L_1^0\subseteq H_1^1\times_{s,F_1^0}
  L_1^0\) under the map~\((F^1_1,s)\) is naturally isomorphic to the
  fibre of~\(F\).  Restricting an open surjection to the preimage of
  a subspace is also a case of pull-back, so the restriction remains
  an open surjection.  This says exactly that the restriction
  \(F_1|_{G}\colon G\to G_2\) is a groupoid fibration.  Since
  \(G_1\subseteq G\subseteq L_1\), the fibre of~\(F_1|_{G}\) is the
  same as for~\(F_1\).  So the groupoid fibration \(F_1|_{G}\colon
  G\to G_2\) has fibre~\(G_1\) as asserted.

  If~\(F_1\) is a groupoid covering, then its fibre is just a space.
  Hence the fibration~\(F|_G\) also has a space as its fibre and thus
  is a groupoid covering by Proposition~\ref{pro:groupoid_covering}.
  If~\(F_2\) is a groupoid covering, then~\(G_2\) is just a space.
  Hence the fibre~\(G_1\) of the groupoid fibration~\(F|_G\colon G\to
  G_2\) is~\(G\) by Lemma~\ref{lem:fibration_to_space}.
\end{proof}

\begin{example}
  \label{exa:act_arrows1}
  Let~\(H\) be a topological groupoid and let \(p\colon X\onto H^1\)
  be an open surjection.  We let~\(H\) act on~\(H^1\) by the left
  translation action and form its transformation groupoid \(H\ltimes
  H^1\).  Example~\ref{exa:hypercovers} applied to the map~\(p\) and
  the groupoid~\(H\ltimes H^1\) gives a groupoid fibration \(F_1\colon
  p^*(H\ltimes H^1) \to H\ltimes H^1\) with the \v{C}ech groupoid
  of~\(p\) as its fibre.  Example~\ref{exa:act_space} gives a groupoid
  covering \(F_2\colon H\ltimes H^1 \to H\).
  Proposition~\ref{pro:compose_fibrations} shows that the composite
  functor \(F\defeq F_2\circ F_1\colon p^*(H\ltimes H^1) \to H\ltimes
  H^1 \to H\) is a groupoid fibration with the same fibre~\(p^*(H^1)\)
  as~\(F^1\).  The groupoid fibration \(F\colon p^*(H\ltimes H^1) \to
  H\) describes an action of~\(H\) on the \v{C}ech
  groupoid~\(p^*(H^1)\) of~\(p\) with transformation groupoid
  \(p^*(H\ltimes H^1)\).
\end{example}

\begin{lemma}
  \label{lem:action_space_trafo-groupoid}
  The groupoid \(p^*(H\ltimes H^1)\) is isomorphic to the \v{C}ech
  groupoid of the open surjection \(\s\circ p\colon X\onto H^1 \onto
  H^0\).
\end{lemma}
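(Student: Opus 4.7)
The plan is to write down both groupoids very concretely and then exhibit a bijection on the arrow spaces that is manifestly a continuous groupoid homomorphism with continuous inverse. Unpacking the definitions, the arrow space of $p^*(H\ltimes H^1)$ consists of triples $(x,(h,k),y)$ with $x,y\in X$, $(h,k)\in H^1\times H^1$ satisfying $\s(h)=\rg(k)$, and $p(x)=hk$, $p(y)=k$; the source and range are $\s(x,(h,k),y)=y$, $\rg(x,(h,k),y)=x$, and the multiplication is induced from that of $H\ltimes H^1$. On the other hand, the \v Cech groupoid of $\s\circ p$ has object space $X$ and arrow space $X\times_{\s\circ p,H^0,\s\circ p}X$, with $\s(x,y)=y$, $\rg(x,y)=x$, and $(x,y)\cdot(y,z)=(x,z)$.

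The candidate isomorphism, which is the identity on objects, is the forgetful map
\[
\Phi\colon (x,(h,k),y)\longmapsto (x,y)
\]
on arrows. This is well defined because $\s(p(x))=\s(hk)=\s(k)=\s(p(y))$. For the inverse, I would set
\[
\Psi\colon (x,y)\longmapsto \bigl(x,(p(x)\,p(y)^{-1},\,p(y)),\,y\bigr),
\]
observing that $p(x)p(y)^{-1}$ is well defined exactly when $\s\circ p(x)=\s\circ p(y)$, and that with $h\defeq p(x)p(y)^{-1}$ and $k\defeq p(y)$ one automatically has $hk=p(x)$ and $\s(h)=\rg(k)$. Both $\Phi$ and $\Psi$ are evidently continuous (they are built from coordinate projections and the continuous operations of~$H$), and $\Phi\circ\Psi=\mathrm{Id}$ is immediate while $\Psi\circ\Phi=\mathrm{Id}$ follows from the constraints $p(x)=hk$ and $p(y)=k$, which force $h=p(x)p(y)^{-1}$.

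It remains to check that $\Phi$ is a functor. Range and source are preserved on the nose. For composability, the product $(x,(h_1,k_1),y)\cdot(y,(h_2,k_2),z)$ in $p^*(H\ltimes H^1)$ requires $k_1=h_2k_2$ and equals $(x,(h_1h_2,k_2),z)$, and $\Phi$ sends this to $(x,z)=(x,y)\cdot(y,z)$ as required.

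The only subtle point — and the one I would single out as the main technical hurdle — is a careful handling of the various fibre products: one needs to verify that the natural bijection on arrows is also a \emph{homeomorphism} for the subspace topologies inherited from $X\times H^1\times H^1\times X$ and $X\times X$ respectively. This comes down to noting that the map $(x,y)\mapsto (p(x)p(y)^{-1},p(y))$ factors through the open, continuous operations of $H$ restricted to the pullback along the open surjection $\s\circ p$, so that $\Psi$ is continuous; everything else is a routine diagram chase.
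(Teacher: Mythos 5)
Your proof is correct and is essentially the paper's argument written out in coordinates: the bijection \((x,(h,k),y)\leftrightarrow (x,y)\) with \(h=p(x)p(y)^{-1}\), \(k=p(y)\) is exactly what one gets by unwinding the paper's two-step proof, which first identifies \(H\ltimes H^1\) with the \v{C}ech groupoid \(\s^*(H^0)\) via the homeomorphism \((h,k)\mapsto (hk,k)\) and then invokes transitivity of pull-backs, \(p^*\s^*(H^0)\cong (\s\circ p)^*(H^0)\). The only cosmetic remark is that continuity of \(\Psi\) needs nothing about openness of \(\s\circ p\) (that is only relevant for the \v{C}ech groupoid being a topological groupoid at all); it follows simply because each coordinate of \(\Psi\) is a composite of \(p\), inversion and multiplication in \(H\), landing in the subspace of \(X\times H^1\times H^1\times X\) defining the arrow space.
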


\begin{proof}
  There is a canonical homeomorphism \((\rg,\s)\colon (H\ltimes H^1)^1
  \congto H^1\times_{\s,H^0,\s} H^1\), \((h_1,h_2)\mapsto (h_1\cdot
  h_2,h_2)\).  This together with the identity on objects identifies
  \(H\ltimes H^1\) with the \v{C}ech groupoid~\(\s^*(H^0)\) of
  \(\s\colon H^1\onto H^0\).  Thus \(p^*(H\ltimes H^1) \cong p^*
  \s^*(H^0) \cong (\s\circ p)^*(H^0)\).
\end{proof}

Now let~\(H\) be a topological groupoid with locally Hausdorff arrow
space~\(H^1\).  Choose a cover \(H^1 = \bigcup_{U\in\mathcal{U}} U\)
by Hausdorff, open subsets and let \(X\defeq
\bigsqcup_{U\in\mathcal{U}} U\) with the canonical map \(p\colon
X\onto H^1\) that is the inclusion map on each component \(U\subseteq
Y\).  This is a local homeomorphism and \emph{a fortiori} an open
surjection.  The construction above gives a groupoid fibration from
the \v{C}ech groupoid \((\s\circ p)^*(H^0)\) of \(\s\circ p\colon
X\onto H^0\) to~\(H\) with the \v{C}ech groupoid~\(p^*(H^1)\) as its
fibre.  Both \((\s\circ p)^*(H^0)\) and~\(p^*(H^1)\) have object
space~\(X\), which is Hausdorff, and arrow spaces contained in
\(X\times X\), which forces their arrow spaces to be Hausdorff as
well.  The \v{C}ech groupoid~\(p^*(H^1)\) is étale because~\(p\) is a
surjective local homeomorphism.

\subsection{Groupoid extensions}
\label{sec:groupoid_extensions}

\begin{definition}
  \label{def:groupoid_extension}
  A \emph{\textup{(}topological\textup{)} groupoid extension} is a
  diagram \(G\hookrightarrow L\to H\) where the functor \(F\colon L\to
  H\) is a groupoid fibration of topological groupoids with fibre
  \(G\subseteq L\), such that \(F^0\colon L^0\congto H^0\) is a
  homeomorphism.
\end{definition}

\begin{lemma}
  \label{lem:fibration_iso_objects}
  A functor \(F\colon L\to H\) that is a homeomorphism on objects is a
  groupoid fibration is and only if \(F^1\colon L^1\to H^1\) is an
  open surjection, and a groupoid covering if and only if~\(F^1\) is a
  homeomorphism as well, that is, \(F\) is an isomorphism of
  topological groupoids.
\end{lemma}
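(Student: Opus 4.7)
The plan is to reduce the lemma to a trivial identification of the fibre product appearing in the definition of a groupoid fibration.  Since $F^0\colon L^0\congto H^0$ is a homeomorphism, the projection
\[
\pr_1\colon H^1\times_{\s,H^0,F^0} L^0 \to H^1
\]
is a homeomorphism, with inverse $h\mapsto (h,(F^0)^{-1}(\s(h)))$.  Under this identification, the map $(F^1,\s)\colon L^1\to H^1\times_{\s,H^0,F^0} L^0$ from~\eqref{eq:groupoid_fibration} becomes the map $F^1\colon L^1\to H^1$ itself; this is just the observation that the second coordinate of $(F^1,\s)(l)=(F^1(l),\s(l))$ is already determined by the first, since $F^0(\s(l))=\s(F^1(l))$ by functoriality and $F^0$ is a bijection.

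Given this identification, $F$ is a groupoid fibration in the sense of Definition~\ref{def:groupoid_fibration} if and only if $F^1$ itself is an open surjection, and $F$ is a groupoid covering if and only if $F^1$ is a homeomorphism.  The latter combined with $F^0$ being a homeomorphism is exactly the assertion that $F$ is an isomorphism of topological groupoids.

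There is no real obstacle here; the only thing to verify is the commutativity of the square expressing $(F^1,\s)=\pr_1^{-1}\circ F^1$, which is immediate from the fact that $F$ is a functor.  I would present the proof simply as this identification, perhaps with a one-line diagram to make the factorisation explicit.
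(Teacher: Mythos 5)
Your proof is correct and is essentially the paper's own argument: the paper likewise simplifies~\eqref{eq:groupoid_fibration} via the homeomorphism \(H^1\times_{\s,H^0,F^0} L^0\congto H^1\), \((h,x)\mapsto h\), which identifies \((F^1,\s)\) with \(F^1\). Your spelled-out verification that the second coordinate is determined by the first via \(F^0(\s(l))=\s(F^1(l))\) is just a more explicit rendering of the same one-line reduction.
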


\begin{proof}
  Simplify~\eqref{eq:groupoid_fibration} using the homeomorphism
  \(H^1\times_{\s,H^0,F^0} L^0\congto H^1\), \((h,x)\mapsto h\).
\end{proof}

\begin{proposition}
  \label{pro:group_fibration}
  Let \(G\) and~\(H\) be topological groups.  A groupoid fibration
  \(L\to H\) with fibre~\(G\) is the same as an extension of
  topological groups \(G\rightarrowtail L\onto H\).
\end{proposition}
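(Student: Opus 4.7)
The plan is to deduce this proposition as a direct corollary of Lemma~\ref{lem:fibration_iso_objects} once we observe that the hypothesis forces $L^0$ to be a single point.

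First I would start from a groupoid fibration $F \colon L\to H$ with fibre~$G$. By Definition~\ref{def:groupoid_fibration}, the fibre satisfies $G^0 = L^0$. Since~$G$ is a topological group, $G^0$ is a single point, so $L^0$ is a single point as well. Hence~$L$ is a topological groupoid with one object, i.e., a topological group, and $F^0\colon L^0\congto H^0$ is trivially a homeomorphism. Now Lemma~\ref{lem:fibration_iso_objects} applies: $F$ is a groupoid fibration precisely when $F^1\colon L^1\to H^1$ is an open surjection. The fibre's arrow space
\[
G^1 = \{g\in L^1 \mid F^1(g) = 1_{F^0(\s(g))}\} = \ker F^1
\]
is exactly the kernel of the continuous group homomorphism~$F^1$. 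So the data of~$F$ is precisely a short exact sequence of topological groups $G\rightarrowtail L\onto H$ in which the surjection~$F^1$ is open, which is what ``extension of topological groups'' means.

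Conversely, given an extension $G\rightarrowtail L\onto H$, I would view $L$ and~$H$ as one-object topological groupoids, so that the quotient homomorphism $F\colon L\to H$ becomes a continuous functor with $F^0$ a homeomorphism of singletons and $F^1$ an open continuous surjection. Lemma~\ref{lem:fibration_iso_objects} then yields that~$F$ is a groupoid fibration, and its fibre is the one-object groupoid with arrow space $\ker F^1 = G$, as desired. There is no real obstacle here; the only subtle point is recognising that requiring~$G$ to be the fibre already forces~$L$ to be a one-object groupoid, after which Lemma~\ref{lem:fibration_iso_objects} takes care of both implications.
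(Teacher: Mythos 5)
Your proposal is correct and follows exactly the paper's own argument: observe that \(L^0=G^0\) is a point, so \(L\) is a group, then apply Lemma~\ref{lem:fibration_iso_objects} to identify group fibrations with continuous, open, surjective homomorphisms whose kernel is the fibre. No gaps.
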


\begin{proof}
  We have \(L^0=G^0=\star\), so~\(L\) is a group as well.  A group
  fibration is the same as a continuous, open, surjective group
  homomorphism by Lemma~\ref{lem:fibration_iso_objects}.  The fibre is
  the kernel of this homomorphism.
\end{proof}

A topological group extension \(G\rightarrowtail L\onto H\) comes from
a classical action of~\(H\) on~\(G\) by group automorphisms if and
only if it splits by a continuous group homomorphism \(H\to L\); this
is well known, and also follows from
Proposition~\ref{pro:fibration=classical}.  We consider \emph{any}
topological group extension as an ``action'' of~\(H\) on~\(G\).  For
Polish groups, such extensions may be classified by Borel measurable
\(2\)\nb-cocycles, see~\cite{Brown:Extensions}.

\begin{remark}
  \label{rem:pretopology_group}
  Proposition~\ref{pro:group_fibration} works in a category with
  pretopology if we assume that our category has a final
  object~\(\star\).  Then we may define a group as a groupoid with
  \(G^0=\star\).  (Any map from a non-empty space to the one-point
  space is an open surjection.  Some basic features of topological
  groups and their actions only work in the abstract setting if we
  assume that any map to the final object is a cover, except possibly
  for the map from the initial object, if that exists, compare
  \cite{Meyer-Zhu:Groupoids}*{Assumption 2.10 and Examples 3.14
    and~4.9}.)
\end{remark}

\begin{lemma}
  \label{lem:extension_normal_subgroup_bundle}
  Let \(G\hookrightarrow L\to H\) be a groupoid extension.  Then
  \(G\subseteq L\) is a normal subgroup bundle on which the range map
  is open, and \(H = L/G\).  Conversely, any normal subgroup bundle
  \(G\subseteq L\) on which the range map is open appears in a
  groupoid extension that is unique up to isomorphism.
\end{lemma}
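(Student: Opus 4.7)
The plan is to treat the two directions separately. For the forward direction, suppose $G \hookrightarrow L \to H$ is a groupoid extension. I would first verify that $G^1$ lies in the isotropy: if $g \in G^1$ then $F^1(g) = 1_{F^0(\s(g))}$, so $F^0(\rg(g)) = \rg(F^1(g)) = \s(F^1(g)) = F^0(\s(g))$, and the injectivity of $F^0$ forces $\rg(g) = \s(g)$. Normality is then immediate from functoriality: for $l \in L^1$ with $\s(l) = \rg(g)$, the conjugate $l g l^{-1}$ maps under $F^1$ to $F^1(l) \cdot 1 \cdot F^1(l)^{-1} = 1_{F^0(\rg(l))}$, so $lgl^{-1} \in G^1$. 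Openness of $\rg$ on $G$ follows from Lemma~\ref{lem:fibre_top_groupoid} (which gives openness of $\s$ on $G$) together with the inversion homeomorphism. To identify $H$ with $L/G$, Lemma~\ref{lem:fibration_iso_objects} shows that $F^1\colon L^1 \onto H^1$ is an open surjection, and its fibres are exactly the left cosets $G^1\cdot l$ since $F^1(l_1) = F^1(l_2)$ forces $\s(l_1)=\s(l_2)$ and then $l_1 l_2^{-1} \in G^1$.

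For the converse, given a normal subgroup bundle $G \subseteq L$ on which $\rg$ is open, I would define $H^0 \defeq L^0$ and $H^1 \defeq L^1/{\sim}$ where $l_1 \sim l_2$ iff $l_1 l_2^{-1} \in G^1$, equipped with the quotient topology. Using that $G$ consists of isotropy arrows and is normal, the source, range, multiplication, inversion and unit maps descend to continuous structure maps on $H$, and the resulting functor $F\colon L \to H$ is a homeomorphism on objects whose fibre equals $G$; these are routine checks. By Lemma~\ref{lem:fibration_iso_objects}, it remains only to show that $F^1\colon L^1 \onto H^1$ is an open surjection.

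The openness of the quotient map $\pi = F^1$ is the main obstacle. For $U \subseteq L^1$ open, $\pi^{-1}(\pi(U)) = G^1\cdot U$ is the image of the open set $G^1 \times_{\s,L^0,\rg} U$ under the action map $m\colon G^1 \times_{\s,L^0,\rg} L^1 \to L^1$, $(g,l)\mapsto gl$. The plan is to factor $m$ through the homeomorphism $G^1 \times_{\s,L^0,\rg} L^1 \congto G^1 \times_{\rg,L^0,\rg} L^1$, $(g,l)\mapsto (g,gl)$, which is well defined since $g$ is isotropy (so $\rg(gl) = \rg(g) = \s(g) = \rg(l)$) and has continuous inverse $(g,l')\mapsto (g,g^{-1}l')$. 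Under this identification $m$ becomes the coordinate projection onto $L^1$, which is the pullback of $\rg\colon G^1 \to L^0$ along $\rg\colon L^1 \to L^0$, hence open precisely because $\rg$ on $G^1$ is open by hypothesis. This is where the range-openness assumption enters essentially.

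Uniqueness of the extension is then formal. Any two extensions $L\to H$ and $L\to H'$ with fibre $G$ yield, by the forward direction, the same equivalence relation on $L^1$; since the projections $F^1$ and $F'^1$ are open surjections onto the corresponding quotient topologies, the canonical bijection $H \congto H'$ on arrows is a homeomorphism, giving a unique isomorphism of groupoids over $L$.
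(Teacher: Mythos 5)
Your proof is correct, and its overall architecture matches the paper's: forward direction via the isotropy/normality computation and Lemma~\ref{lem:fibre_top_groupoid}, converse via the quotient topology on \(L^1/G\) and Lemma~\ref{lem:fibration_iso_objects}. The genuine difference is at the crucial step, the openness of the quotient map \(L^1\to L^1/G\): the paper simply invokes \cite{Meyer-Zhu:Groupoids}*{Proposition~9.34}, whereas you prove it directly by writing \(\pi^{-1}(\pi(U))=G^1\cdot U\) and factoring the action map through the shear homeomorphism \(G^1\times_{\s,L^0,\rg}L^1\congto G^1\times_{\rg,L^0,\rg}L^1\), \((g,l)\mapsto(g,gl)\), so that it becomes the pull-back of the open map \(\rg\colon G^1\to L^0\); this is a correct, self-contained and more elementary substitute, and it makes visible exactly where the range-openness hypothesis enters. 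Your uniqueness argument also differs slightly: you observe that both open surjections induce the same \(G\)-coset equivalence relation and hence the same quotient topology, while the paper deduces \(H\cong L/G\) from the principal-bundle statement of Proposition~\ref{pro:fibration_basic_action_GL}; both are fine. One presentational caveat: the "routine checks" that the structure maps of \(H\) are continuous (notably the multiplication on \(H^1\)) already use that \(\pi\) is open (so that products of \(\pi\) with itself are again quotient maps), so that openness argument should be established before, not after, those checks -- but since you do prove it, this is an ordering issue rather than a gap.
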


\begin{proof}
  All three groupoids in a groupoid extension have the same or
  homeomorphic object spaces: \(G^0=L^0\cong H^0\).  If \(g\in G^1\),
  then \(F^0(\s(g))=\s(F^1(g))=\rg(F^1(g))=F^0(\rg(g))\), so~\(G\) is
  a bundle of groups contained in~\(L\).  If \(g\in G^1\), \(l\in
  L^1\) with \(\s(g)=\rg(l)\), then \(l g l^{-1}\in G^1\) as well,
  that is, the subgroup bundle~\(G\) is normal in~\(L\).  The range
  and source maps of~\(L\) restrict to open mappings on~\(G\) by
  Lemma~\ref{lem:fibre_top_groupoid}.  Conversely, let \(G\subseteq
  L\) be a normal subgroup bundle.  This is a topological groupoid
  with the subspace topology if and only if the range map on~\(L\)
  restricts to an open map on~\(G\).  Assume this.  Since~\(G\) is a
  normal subgroup bundle, there is a unique multiplication on \(H^1
  \defeq L^1/ G\) so as to give a groupoid~\(H\) with object
  set~\(H^0\) and such that the quotient map \(L\to H\) is a functor.
  We equip~\(H^1\) with the quotient topology.  Then~\(H\) is a
  topological groupoid.  The quotient map \(L^1\to L^1/G = H^1\) is
  automatically an open surjection by
  \cite{Meyer-Zhu:Groupoids}*{Proposition~9.34}.  Hence \(G\into
  L\onto H\) is an extension of topological groupoids by
  Lemma~\ref{lem:fibration_iso_objects}.  If \(G\hookrightarrow L\to
  H\) is a groupoid extension, then~\(H\) is canonically homeomorphic
  to the quotient~\(L/G\) described above by
  Proposition~\ref{pro:fibration_basic_action_GL}, simplified using
  \(H^1 \times_{\s,H^0,F^0} L^0 \cong H^1\) as in the proof of
  Lemma~\ref{lem:fibration_iso_objects}.  Thus any groupoid extension
  comes from a unique normal subgroup bundle \(G\subseteq L\) on which the
  range map is open.
\end{proof}

\begin{example}
  \label{exa:isotropy-ext}
  Let~\(L\) be an étale groupoid and consider the interior \(G\defeq
  \Iso^\circ(L)\) of its isotropy bundle \(\{g\in L\mid
  \s(g)=\rg(g)\}\).  This is an open, normal group bundle.  So we
  may form a groupoid extension \(G\into L\onto H\) with \(H^1\defeq
  L^1/G\) as in Lemma~\ref{lem:extension_normal_subgroup_bundle}.
\end{example}

\begin{example}
  \label{exa:extensions}
  A \emph{central groupoid extension} \(G\into L\onto H\) is an
  extension where \(G=L^0\times \Gamma\) is a trivial group bundle
  such that the conjugation action of arrows in~\(L\) induces the
  trivial map on~\(\Gamma\).  More precisely, let \(\iota\colon
  L^0\times\Gamma\into L\) be the embedding, then we require
  \(\iota(\rg(l),\gamma)\cdot l=l\cdot\iota(\s(l),\gamma)\) for all
  \(\gamma\in \Gamma\) and \(l\in L\).  This forces~\(\Gamma\) to be
  Abelian.  Such extensions have been extensively studied, especially
  for \(\Gamma=\Torus\), which leads to twisted groupoid
  \(\Cst\)\nb-algebras, see \cites{Muhly-Williams:Continuous-traceII,
    Renault:Cartan.Subalgebras}.  These extensions also appear in the
  study of groupoid cohomology (see, for instance,
  \cite{Renault:Transverse_dynamical}) and are closely related to
  gerbes and thus to twisted \(K\)\nb-theory, see
  \cite{Tu-Xu-Laurent-Gengoux:Twisted_K}*{Remark 2.14}.
\end{example}

The class of groupoid fibrations where~\(F^0\) is not a homeomorphism
but only an open surjection also deserves special attention (compare
Remark~\ref{rem:Lie_groupoid_fibration}).  They should behave like
groupoid extensions where the ``kernel'' is no longer a group bundle.

\begin{lemma}
  \label{lem:F1_F0_open_surjective}
  Let \(F\colon L\to H\)
  be a groupoid fibration.  The map \(F^1\colon L^1\to H^1\)
  is surjective or open if and only if \(F^0\colon L^0\to H^0\)
  is surjective or open, respectively.
\end{lemma}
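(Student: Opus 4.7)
The plan is to factor $F^1$ through the pullback appearing in~\eqref{eq:groupoid_fibration} and then transfer the relevant property (openness or surjectivity) from $F^1$ to $F^0$ via this pullback decomposition. Let $P \defeq H^1 \times_{\s,H^0,F^0} L^0$ with coordinate projections $\pi_1\colon P \to H^1$ and $\pi_2\colon P\to L^0$. By definition of a groupoid fibration, the map $(F^1,\s)\colon L^1\to P$ is an open surjection, and by construction $F^1 = \pi_1\circ (F^1,\s)$.

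First I would observe the easy lemma that, whenever $q\colon A\twoheadrightarrow B$ is an open surjection, a map $g\colon B\to C$ is open (respectively surjective) if and only if $g\circ q$ is open (respectively surjective). The forward directions are clear; for the converses, use that for $V\subseteq B$ open, surjectivity of $q$ gives $g(V) = g(q(q^{-1}(V))) = (g\circ q)(q^{-1}(V))$, which is open if $g\circ q$ is open, and any $c\in g(B)$ equals $(g\circ q)(a)$ for some $a\in A$ if $g\circ q$ is surjective. Applying this with $q=(F^1,\s)$ and $g=\pi_1$, it suffices to show that $\pi_1\colon P\to H^1$ is open (respectively surjective) if and only if $F^0$ is open (respectively surjective).

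For this step, note that $\pi_1$ fits into a pullback square against $F^0$, along the map $\s\colon H^1\to H^0$, which is itself an open surjection since $H$ is a topological groupoid. One direction is the standard fact that pullbacks preserve openness and surjectivity of maps: if $F^0$ is open (respectively surjective), so is $\pi_1$. For the converse, suppose $\pi_1$ is open and let $U\subseteq L^0$ be open. Then $\pi_2^{-1}(U)$ is open in $P$, so $\pi_1(\pi_2^{-1}(U))$ is open in $H^1$. A direct unpacking of the fibre product shows
\[
\pi_1(\pi_2^{-1}(U)) = \{h\in H^1 \mid \s(h)\in F^0(U)\} = \s^{-1}(F^0(U)).
\]
Since $\s\colon H^1\onto H^0$ is an open surjection, a subset $V\subseteq H^0$ is open precisely when $\s^{-1}(V)$ is open in $H^1$; hence $F^0(U)$ is open. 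For surjectivity: if $\pi_1$ is surjective and $y\in H^0$, pick any $h\in\s^{-1}(y)$ (which is non-empty because $\s$ is surjective), find $(h,x)\in P$ with $\pi_1(h,x)=h$, and read off $F^0(x)=\s(h)=y$.

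The main obstacle, though minor, is simply making sure each factorisation argument is stated with the correct ``iff.'' No deeper input is needed beyond the defining property of a groupoid fibration, the fact that $\s\colon H^1\onto H^0$ is open and surjective, and stability of these two properties under pullback along open surjections (which is recorded in \cite{Meyer-Zhu:Groupoids} and used already in the proof of Lemma~\ref{lem:fibre_top_groupoid}).
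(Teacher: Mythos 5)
Your proof is correct and follows essentially the same route as the paper: factor \(F^1=\pi_1\circ(F^1,\s)\) through the pullback \(H^1\times_{\s,H^0,F^0}L^0\), use that \((F^1,\s)\) is an open surjection by the fibration hypothesis, and then compare \(\pi_1\) with \(F^0\) via the pullback along the open surjection \(\s\colon H^1\onto H^0\). The only difference is that you carry out explicitly the elementary verifications (the two-out-of-three and locality properties for open and surjective maps separately) that the paper cites abstractly and leaves as ``can be checked by hand.''
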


\begin{proof}
  Since \(\s\colon H^1\onto H^0\)
  is an open surjection, so is the coordinate projection
  \(H^1\times_{\s,F^0} L^0 \onto L^0\).
  Therefore, the coordinate projection
  \(H^1\times_{\s,F^0} L^0 \to H^1\)
  is an open surjection if and only if \(F^0\colon L^0\to H^0\)
  is an open surjection (this is the locality of covers for the
  pretopology of open surjections in the notation
  of~\cite{Meyer-Zhu:Groupoids}).  Even more, it can be checked by
  hand that~\(F^0\)
  is open or surjective, respectively, if and only if the coordinate
  projection \(H^1\times_{\s,F^0} L^0 \to H^1\)
  is.  If~\(F\)
  is a groupoid fibration, then the map
  \((F^1,\s)\colon L^1\onto H^1\times_{\s,F^0} L^0\)
  is an open surjection as well.  The two-out-of-three property for
  the pretopology of open surjections says that the composite map
  \(F^1\colon L^1\to H^1\times_{\s,F^0} L^0 \to H^1\)
  is an open surjection if and only if the projection
  \(H^1\times_{\s,F^0} L^0 \to H^1\)
  is one; once again, it can be checked that the statement remains
  true for open maps and surjective maps separately.  Since the
  coordinate projection \(H^1\times_{\s,F^0} L^0 \to H^1\)
  is open or surjective if and only if the map
  \(F^0\colon L^0\to H^0\)
  is so, \(F^1\) is open or surjective if and only if~\(F^0\) is so.
\end{proof}

\begin{example}
  \label{exa:surjective_not_enough}
  A (continuous) open, surjective functor \(F\colon L\to H\) need not
  be a fibration.  For instance, let~\(K\) be the pair groupoid on the
  \(2\)\nb-element set.  This is a finite, discrete groupoid with two
  objects and four arrows \(K^1\defeq \{a,b,\gamma,\gamma^{-1}\}\),
  where \(a,b\) are the two unit arrows and~\(\gamma\) is the
  non-trivial arrow with \(\s(\gamma)=a\), \(\rg(\gamma)=b\).  Let
  \(K_n\defeq \{a_n,b_n,\gamma_n,\gamma_n^{-1}\}\) be a copy of~\(K\)
  for each \(n\in \N\) and \(L\defeq \bigsqcup_{n\in \N} K_n\).  Let
  \(F\colon L\to \Z\) be the unique functor that sends
  \(\gamma_n\mapsto n\).  This functor between discrete groupoids is
  surjective, but not a fibration.
\end{example}

\begin{remark}
  The notion of an ``extension'' of Borel groupoids in
  \cite{Renault_AnantharamanDelaroche:Amenable_groupoids}*{Definition~5.2.7}
  is closely related to our definition of a groupoid fibration with an
  open surjection~\(F^0\).  In the world of Borel structures, the
  condition of being an open surjection is replaced by the condition
  of being surjective, so the requirement is that the maps \(F^0\) and
  \((F^1,\s)\) be surjective.

  Under mild extra conditions, amenability of Borel groupoids is
  preserved under extensions by
  \cite{Renault_AnantharamanDelaroche:Amenable_groupoids}*{Theorem~5.2.14}.
  Moreover,
  Renault~\cite{Renault:Topological_amenability_is_a_Borel_property}
  proves that Borel amenability is equivalent to topological
  amenability for locally compact and locally Hausdorff topological
  groupoids with Haar systems and Hausdorff unit space.  Therefore, if
  \(L\to H\) is a groupoid fibration with fibre~\(G\) and the map
  \(F^0\colon L^0\to H^0\) is surjective, then \(L\) is amenable if
  \(G\) and~\(H\) are.  We expect this to remain true without the
  surjectivity assumption on~\(F^0\), but have not examined the matter
  closely.
\end{remark}

\section{Étale groupoid fibrations and inverse semigroup gradings}
\label{sec:fibrations_gradings}

Let~\(H\)
be an étale groupoid.  We are going to show that groupoid
fibrations~\(L\to H\)
are essentially equivalent to the gradings by inverse semigroups used
in~\cite{Buss-Meyer:Actions_groupoids} to model groupoid actions on
other groupoids.  Since this section will not be needed in the rest of
the paper, we assume that the reader is familiar with the relevant
notions from~\cite{Buss-Meyer:Actions_groupoids}.  Recall that every
étale groupoid~\(H\)
is isomorphic to a groupoid of germs~\(S\ltimes Z\)
for some action of a unital inverse semigroup~\(S\)
on a space~\(Z\)
(see~\cite{Exel:Inverse_combinatorial}).  We assume~\(H\)
to be of this form.

\begin{theorem}
  \label{the:S-action_groupoid_fibration}
  A groupoid fibration \(F\colon L\to S\ltimes Z\) with fibre~\(G\)
  is equivalent to an \(S\)\nb-grading on~\(L\) with \(L_1=G\) -- so
  that~\(L\) is the transformation groupoid \(S\ltimes G\) for an
  action of~\(S\) on~\(G\) by partial equivalences -- together with
  a \(G\)\nb-invariant continuous map \(G^0=L^0\to Z\) that is
  \(S\)\nb-equivariant for the induced action of~\(S\) on~\(G^0/G\).
\end{theorem}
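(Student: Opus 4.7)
The plan is to use the description of arrows in the étale groupoid $S\ltimes Z$ as germs $[t,z]$ with $t\in S$ and $z\in\dom(t)$, and to translate between a groupoid fibration $F\colon L\to S\ltimes Z$ and the associated $S$-grading on $L$ via the open bisections $\tilde{t}\defeq\{[t,z]:z\in\dom(t)\}\subseteq (S\ltimes Z)^1$, together with the framework of $S$-actions on groupoids by partial equivalences developed in \cite{Buss-Meyer:Actions_groupoids}.

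For the forward direction, I would set $L_t\defeq (F^1)^{-1}(\tilde{t})$ for each $t\in S$. These subsets are open, their union is $L^1$ since $\bigcup_t\tilde{t}=(S\ltimes Z)^1$, they satisfy $L_t\cdot L_u\subseteq L_{tu}$ and $L_t^{-1}=L_{t^*}$ by the multiplicativity of $F^1$, and $L_1=G$ because the unit bisection consists of identities. By Lemma \ref{lem:Lh_locally_closed}, for every $h\in\tilde{t}$ the fibre $L_h$ is a groupoid equivalence between $G_{\rg(h)}$ and $G_{\s(h)}$; bundling these equivalences over $h\in\tilde{t}$ produces the partial equivalence of $G$ assigned to $t$ by the grading. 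The map $F^0\colon G^0=L^0\to Z$ is the required anchor: it is $G$-invariant since $F^0(\rg(g))=\rg(F^1(g))=F^0(\s(g))$ for $g\in G^1$, and its $S$-equivariance on $G^0/G$ is automatic from the fact that any $t\in S$ lifts to an arrow $[t,F^0(x)]$ whose range is $t\cdot F^0(x)$.

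Conversely, given an $S$-action on $G$ by partial equivalences together with an $S$-equivariant $G$-invariant map $\rho\colon G^0\to Z$, I would form $L\defeq S\ltimes G$ as in \cite{Buss-Meyer:Actions_groupoids}, set $F^0\defeq\rho$, and $F^1(l)\defeq[t,\rho(\s(l))]$ for $l\in L_t$. Functoriality is routine. To verify that $(F^1,\s)\colon L^1\to (S\ltimes Z)^1\times_{\s,Z,\rho} L^0$ is an open surjection, I would exploit the principal bundle structure on each slice $L_t$ coming from the partial equivalence, picking a local representative $t$ for any germ and pulling back a local section. Finally, unwinding the definitions shows the two constructions are inverse to each other up to canonical isomorphism.

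The main obstacle is handling the germ identifications in $S\ltimes Z$ coherently on both sides. Distinct elements $t,u\in S$ may produce the same germ $[t,z]=[u,z]$, so both the decomposition $L^1=\bigcup_t L_t$ and the assignment $F^1(l)\defeq[t,\rho(\s(l))]$ must respect this equivalence. The $S$-equivariance of $\rho$ is precisely the hypothesis that makes $F^1$ well defined across overlapping slices, and dually it is what allows the preimage slices $L_t$ to fit together into a bona fide $S$-grading rather than just a family of open subsets indexed by $S$.
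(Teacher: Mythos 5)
Your overall route is the same as the paper's (set \(L_t\defeq (F^1)^{-1}(t)\) in one direction, \(F^1(l)\defeq[t,\rho(\s(l))]\) in the other), but the forward direction has a genuine gap: you never verify the inclusion \(L_{tu}\subseteq L_t\cdot L_u\). This is the one grading property that actually uses the fibration hypothesis: for \(l\in L_{tu}\) one factors \(F^1(l)=h_1h_2\) with \(h_1\in t\), \(h_2\in u\), and uses the surjectivity of~\eqref{eq:groupoid_fibration} to lift \(h_2\) to some \(l_2\in L_u\) with \(\s(l_2)=\s(l)\), so that \(l=(l\,l_2^{-1})\cdot l_2\in L_t\cdot L_u\). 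Everything you do check (\(L_tL_u\subseteq L_{tu}\), \(L_t^{-1}=L_{t^*}\), \(\bigcup_t L_t=L^1\), \(L_1=G\)) holds for an arbitrary continuous functor \(L\to S\ltimes Z\), and a functor need not be a fibration (compare Example~\ref{exa:surjective_not_enough}); so an argument that never invokes surjectivity of~\eqref{eq:groupoid_fibration} cannot yield the saturation that makes the slices \(L_t\) partial equivalences of~\(G\) and exhibits~\(L\) as a transformation groupoid \(S\ltimes G\). Your appeal to Lemma~\ref{lem:Lh_locally_closed} does not fill this hole: that lemma (whose proof itself rests on the fibration condition) makes each single fibre \(L_h\) an equivalence of the fibres \(G_{\rg(h)}\), \(G_{\s(h)}\); ``bundling'' these over \(h\in\tilde t\) is not the same as showing that the slice \(L_t\) is a partial equivalence of~\(G\) over the open invariant subsets \(\s(L_t)\) and \(\rg(L_t)\). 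You also omit the compatibility axiom \(L_t\cap L_u=\bigcup_{v\le t,u}L_v\), although that one is immediate from the germ relation.

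In the converse direction your sketch misplaces where the hypotheses enter. Well-definedness of \(F^1(l)\defeq[t,\rho(\s(l))]\) on overlaps is not a consequence of equivariance of~\(\rho\) alone: one needs the grading property that \(l\in L_t\cap L_u\) forces \(l\in L_v\) for some \(v\le t,u\), and equivariance then gives \(\rho(\s(l))\in D_{v^*v}\), so the germs \([t,\rho(\s(l))]=[v,\rho(\s(l))]=[u,\rho(\s(l))]\) agree. More importantly, the surjectivity of \((\s,F^1)\) -- the substance of the fibration condition -- is exactly the statement \(\rho^{-1}(D_{t^*t})\subseteq\s(L_t)\), and this is where the \(S\)\nb-equivariance of~\(\rho\) is really used; your plan of pulling back local sections through the principal bundle structure of~\(L_t\) addresses openness, which is in any case automatic here because \(S\ltimes Z\) is étale and \(\s\) restricted to a bisection is a homeomorphism, but not surjectivity. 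Finally, ``unwinding the definitions'' understates the check that the two constructions are mutually inverse: one must show that \(F^1(l)\in t\) implies \(l\in L_t\), which requires producing an idempotent \(e\in S\) with \(te=ue\), \(\s(l)\in\s(L_e)\), and concluding \(l\in L_{ue}=L_{te}\subseteq L_t\).
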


\begin{proof}
  First let~\(F\) be a groupoid fibration.  We define an
  \(S\)\nb-grading on~\(L\) by \(L_t \defeq (F^1)^{-1}(t)\) for \(t\in
  S\), viewed as an open subset of \((S\ltimes Z)^1\).  More
  precisely, each \(t\in S\) is viewed as the set of germs \([t,x]\in
  S\ltimes Z\) with \(x\) in the domain \(\dom(t)=D_{t^*t}\subseteq
  Z\) of the \(S\)\nb-action.  The subspaces~\(L_t\) are open
  because~\(F^1\) is continuous.  The unit fibre~\(L_1\) of the
  grading is equal to the fibre~\(G\) of~\(F\) because the unit arrows
  of \(S\ltimes Z\) are exactly the germs of the form~\([1,x]\).  The
  following properties required for an \(S\)\nb-grading are trivial:
  \[
  L_t\cdot L_u\subseteq L_{tu},\qquad
  L_t^{-1} = L_{t^*},\qquad
  \bigcup_{t\in S} L_t=L^1.
  \]
  If \(l \in L_t\cap L_u\), then \(F^1(l)\in t\cap u\); by the
  definition of \(S\ltimes Z\), this means that \(F^1(l)\in v\) for
  some \(v\in S\) with \(v\le t, u\).  Thus
  \[
  L_t\cap L_u = \bigcup_{v\in S, v\le t,u} L_v.
  \]
  The only property of an \(S\)\nb-grading that requires the fibration
  condition is
  \[
  L_t\cdot L_u \supseteq L_{tu}
  \]
  for all \(t,u\in S\).  Let \(l\in L_{tu}\).  Then \(F^1(l)\in tu\),
  so we may factor \(F^1(l)=h_1h_2\) with \(h_1\in t\), \(h_2\in u\).
  Since \(\s(h_2)=\s(F^1(l))=F^0(\s(l))\)
  and~\eqref{eq:groupoid_fibration} is surjective, there is \(l_2\in
  L^1\) with \(\s(l_2)=\s(l)\) and \(F^1(l_2)=h_2\).  Then \(l_2\in
  L_u\) because \(h_2\in u\), and \(l_1\defeq l\cdot l_2^{-1}\in L_t\)
  because \(F^1(l_1)=h_1\in t\).  Thus \(l\in L_tL_u\) as desired.

  Since \(F^0(\s(g))=\s(F^1(g))=\rg(F^1(g))=F^0(\rg(g))\) for all
  \(g\in G^1\), the continuous map \(F^0\colon G^0=L^0\to Z\) is
  \(G\)\nb-invariant and hence descends to a continuous map \(G^0/G\to
  Z\).  We must show that this map is \(S\)\nb-equivariant.  We recall
  how the action of~\(S\) on~\(G^0/G\) by partial homeomorphisms is
  defined (see the comments before Remark~2.14
  in~\cite{Buss-Meyer:Actions_groupoids}).  For \(t\in S\), let
  \(U_{tt^*} \defeq \rg(L_t) = \s(L_{t^*})\); these are
  \(G\)\nb-invariant open subsets of~\(L^0\), which we view as open
  subsets of~\(G^0/G\).  If \(x\in U_{t^*t} = \s(L_t)\), then pick
  \(l\in L_t\) with \(\s(l)=x\) and define \(t\cdot [x] \defeq
  [\rg(l)]\), where the brackets mean that we take the \(G\)\nb-orbit.
  This does not depend on the choice of~\(l\) because all choices
  of~\(L\) are of the form \(g\cdot l\) with \(g\in G^1\) and we
  divided out the \(G\)\nb-action.  This is indeed a homeomorphism
  from~\([U_{t^*t}]\) onto~\([U_{tt^*}]\), and these partial
  homeomorphisms form an action of~\(S\).

  Since~\(F\) is a functor, \(F^1(l)\in t\) has range~\(F^0(\rg(l))\)
  and source~\(F^0(\s(l))\).  Since \(t\cdot F^0(\s(l))=F^0(\rg(l))\),
  the map \(G^0/G\to Z\) induced by~\(F^0\) is \(S\)\nb-equivariant.
  We have built an \(S\)\nb-grading and an \(S\)\nb-equivariant map
  from a groupoid fibration.

  Conversely, take an \(S\)\nb-grading on~\(L\) with \(L_1 = G\)
  and an \(S\)\nb-equivariant continuous map \(\rho\colon G^0/G\to Z\).
  We are going to define a groupoid fibration \(L\to S\ltimes Z\).  We
  let~\(F^0\) be the composite of~\(\rho\) with the orbit space
  projection \(L^0 = G^0\to G^0/G\).

  For \(l\in L\), there is \(t\in S\) with \(l\in L_t\).  We want to
  define~\(F^1(l)\in (S\ltimes Z)^1\) as the
  germ~\([t,\rho(\s(l))]\) of~\(t\) at \(\rho(\s(l))\).
  Since~\(\rho\) is \(S\)\nb-equivariant, \(\rho(\s(l))\) belongs to
  the domain~\(D_{t^*t}\) of the partial homeomorphism on~\(Z\)
  given by~\(t\), so \([t,\rho(\s(l))]\) is a well defined arrow
  in~\(S\ltimes Z\).  We must also check that \(F^1(l)\) does not
  depend on the choice of~\(t\).  Let \(l\in L_t\cap L_u\).  The
  assumptions for an \(S\)\nb-grading give \(v\in S\) with \(v\le
  t,u\) and \(l\in L_v\).  Then \(\rho(\s(l))\in D_{v^*v}\), so
  \([t,\rho(\s(l))] = [v,\rho(\s(l))] = [u,\rho(\s(l))]\).  The
  map~\(F^1\) is continuous because its restriction to~\(L_t\) is
  continuous for each \(t\in S\).  It is compatible with range maps
  by the equivariance condition \(t\cdot \rho(\s(l)) =
  \rho(\rg(l))\).  Multiplicativity follows from \(L_tL_u\subseteq
  L_{tu}\), so~\(F\) is a continuous functor.

  If \((x,[t,z])\in L^0\times_{F^0,Z,\s} (S\ltimes Z)^1\), then
  \(\rho(x)=z\in D_{t^*t}\).  Since~\(\rho\) is \(S\)\nb-equivariant,
  this implies \(x\in \rho^{-1}(D_{t^*t}) = U_{t^*t} = \s(L_{t^*t}) =
  \s(L_t)\).  Thus there is \(l\in L_t\) with \(\s(l)=x\).  That is,
  the map
  \[
  L^1\xrightarrow{(\s,F^1)} L^0\times_{F^0,Z,\s} (S\ltimes Z)^1
  \]
  is surjective.

  This map is open because~\(S\ltimes Z\) is étale and \(\s\colon
  L^1\onto L^0\) is open.  We now prove this claim in detail.  It
  suffices to check that the restriction of~\((\s,F^1)\) to~\(L_t\)
  is open because the subsets \(L_t\subseteq L^1\) form an open
  cover of~\(L\).  The \(F^1\)\nb-image of~\(L_t\) is contained in
  the bisection of~\(S\ltimes Z\) associated to~\(t\).  Since the
  source map of the étale groupoid~\(S\ltimes Z\) restricts to a
  homeomorphism on any bisection, the map
  in~\eqref{eq:groupoid_fibration} restricted to~\(L_t\) is open if
  and only if \(\s\colon L_t\onto L^0\) is open; this is assumed for
  all topological groupoids.

  Next, we check that the \(S\)\nb-grading on~\(L\) associated to the
  functor~\(F\) is the given one; that is, \(F^1(l)\in t\) if and only
  if \(l\in L_t\).  By construction, if \(l\in L_t\) then \(F^1(l)\in
  t\).  Conversely, let \(l\in L\) satisfy \(F^1(l)\in L_t\).  There
  is \(u\in S\) with \(l\in L_u\); so \(F^1(l)=[u,\rho(\s(l))] \in
  t\).  Hence there is an idempotent \(e\in S\) with \(\rho(\s(l))\in
  D_e\) and \(te=ue\).  Then \(\s(l)\in U_e=\s(L_e)\) and hence \(l=l
  \cdot 1_{\s(l)}\in L_u\cdot U_e = L_{ue} = L_{te} = L_t\cdot U_e
  \subseteq L_t\) as desired.  In particular, \(F^1(l)\) is a unit,
  that is, belongs to the bisection \(1\in S\), if and only if \(l\in
  L_1=G\).  Thus~\(G\) is the fibre of the groupoid fibration~\(F\).

  We have now turned an \(S\)\nb-grading
  with a compatible map \(L^0\to Z\)
  into a fibration \(L\to S\ltimes Z\)
  and vice versa.  And we have checked that when we turn an
  \(S\)\nb-grading
  into a groupoid fibration and back, this gives the same
  \(S\)\nb-grading
  we started with.  Conversely, let us start with a groupoid fibration
  \(F\colon L\to S\ltimes Z\),
  turn it into an \(S\)\nb-grading
  \(L_t = (F^1)^{-1}(t)\)
  on~\(L\)
  with compatible map \(G^0/G\to Z\),
  and then construct a groupoid fibration~\(\tilde{F}\)
  from this.  The new groupoid fibration has the same map~\(F^0\)
  on objects, and it has \((\tilde{F}^1)^{-1}(t) = (F^1)^{-1}(t)\)
  for all \(t\in S\).
  This implies \(\tilde{F}^1=F^1\)
  because elements of~\(t\)
  are distinguished by their source or range.
\end{proof}

\begin{remark}
  \label{rem:openness_redundant}
  The proof of Theorem~\ref{the:S-action_groupoid_fibration} shows
  that the map in~\eqref{eq:groupoid_fibration} is open for any
  functor \(F\colon L\to H\) if~\(H\) is an étale groupoid.
  So~\(F\) is a groupoid fibration if and only if the map
  in~\eqref{eq:groupoid_fibration} is surjective.

  Thus the fibrations between \'etale groupoids defined
  in~\cite{Deaconu-Kumjian-Ramazan:Fell_groupoid_morphism} are the
  same as groupoid fibrations in our sense with the extra property
  that~\(F^1\) is an open surjection.  This is equivalent to~\(F^0\)
  being an open surjection by Lemma~\ref{lem:F1_F0_open_surjective}.
\end{remark}

\begin{example}
  \label{exa:linking-groupoid}
  Let~\(X\) be a groupoid equivalence between two topological
  groupoids \(G\) and~\(H\).  Its linking groupoid~\(L\) is a
  topological groupoid with unit space \(G^0\sqcup H^0\) and arrow
  space \(G^1\sqcup X\sqcup X^*\sqcup H^1\), where~\(X^*\) denotes the
  dual (or inverse) equivalence of~\(X\).  Here \(G\) acts on the left
  and~\(H\) on the right of~\(X\).  The groupoid structure is the
  canonical one involving the groupoid structures of \(G\) and~\(H\)
  and the structure of the equivalence bibundle~\(X\).  Let \(K\defeq
  \{a,b,\gamma,\gamma^{-1}\}\) be the pair groupoid on the
  \(2\)\nb-element set as in Example~\ref{exa:surjective_not_enough}.
  There is an obvious functor \(F\colon L\to K\), where~\(F^0\)
  maps~\(G^0\) to~\(a\) and~\(H^0\) to~\(b\) and \(F^1\colon
  L^1\to K^1\) maps~\(G^1\) to~\(a\), \(H^1\) to~\(a\), \(X\)
  to~\(\gamma^{-1}\) and~\(X^*\) to~\(\gamma\).  This is a groupoid
  fibration with fibre \(G\sqcup H\).  Both \(F^0\) and~\(F^1\) are
  open surjections.
\end{example}

\section{Locally Hausdorff and locally compact groupoids}
\label{sec:locally_Hausdorff_compact}

The main result in this section says that~\(L\) inherits certain
topological properties from \(G\) and~\(H\).  Furthermore, we relate
some properties of~\(H\) to the property of~\(G\) being open or closed
in~\(L\).

\begin{theorem}
  \label{the:locally_Hausdorff_compact}
  Let \(F\colon L\to H\) be a groupoid fibration with fibre~\(G\).  If
  both \(H\) and~\(G\) are Hausdorff or locally Hausdorff,
  respectively, then so is~\(L\).  If \(G\) and~\(H\) are locally
  Hausdorff and locally compact, then so is~\(L\).
\end{theorem}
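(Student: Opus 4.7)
Since $L^0 = G^0$ by construction, the three properties transfer immediately to $L^0$, and it suffices to treat $L^1$.  The central tool I would exploit throughout is Proposition~\ref{pro:fibration_basic_action_GL}: the map $\pi \defeq (F^1,\s)\colon L^1\onto B$, with $B \defeq H^1\times_{\s,H^0,F^0} L^0$, is the bundle projection of a principal $G$-bundle, and the homeomorphism $\phi\colon G^1\times_{\s,G^0,\rg} L^1 \congto L^1\times_{\pi,B,\pi} L^1$, $(g,l)\mapsto(gl,l)$, is the structural input I will use repeatedly.

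For the Hausdorff case I would show that the diagonal $\Delta\subseteq L^1\times L^1$ is closed via a two-step reduction.  First, since $L^0=G^0$ and $H^1$ are Hausdorff, $H^1\times L^0$ is Hausdorff and hence so is the subspace $B$; this makes $L^1\times_B L^1 = (\pi\times\pi)^{-1}(\Delta_B)$ closed in $L^1\times L^1$, so it is enough to show $\Delta$ is closed inside $L^1\times_B L^1$.  Transporting via $\phi$, the diagonal $\Delta$ corresponds to $u(G^0)\times_{G^0} L^1$, and $u(G^0)$ is closed in the Hausdorff space $G^1$ as the equalizer of the continuous maps $\mathrm{id}_{G^1}$ and $u\circ\s$.

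For local Hausdorffness I would localise this argument at each $l_0\in L^1$.  Choose Hausdorff open neighbourhoods $V_H\ni F^1(l_0)$ in $H^1$, $V_{H^0}\ni F^0(\s(l_0))$ in $H^0$, and $V_{L^0}\ni \s(l_0)$ in $L^0$, arranged so that $\s(V_H)\subseteq V_{H^0}$ and $F^0(V_{L^0})\subseteq V_{H^0}$; then $V_B\defeq V_H\times_{V_{H^0}} V_{L^0}$ is a Hausdorff open neighbourhood of $\pi(l_0)$ in $B$.  Pick a Hausdorff open $V_G\ni 1_{\rg(l_0)}$ in $G^1$, and using continuity and idempotence of $u\circ\s$, shrink $V_G$ so that $u(\s(V_G))\subseteq V_G$; this makes $V_G\cap u(G^0)$ the equalizer of $\mathrm{id}$ and $u\circ\s$ inside the Hausdorff $V_G$, hence closed there.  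Continuity of $(l_1,l_2)\mapsto l_1 l_2^{-1}$ from $L^1\times_B L^1$ to $G^1$, sending $(l_0,l_0)$ to $1_{\rg(l_0)}\in V_G$, then lets me choose an open neighbourhood $V_L\ni l_0$ inside $\pi^{-1}(V_B)$ so small that $l_1 l_2^{-1}\in V_G$ whenever $l_1,l_2\in V_L$ satisfy $\pi(l_1)=\pi(l_2)$.  Replaying the Hausdorff argument inside $V_L\times V_L$ then shows $V_L$ is Hausdorff.

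For local compactness I would first note that local compactness of $H^1$ and $L^0=G^0$ provides compact Hausdorff neighbourhoods $K_H\subseteq V_H$ and $K_{L^0}\subseteq V_{L^0}$ of $F^1(l_0)$ and $\s(l_0)$; then $K_B\defeq (K_H\times K_{L^0})\cap B$ is compact, because the defining relation $\s(h)=F^0(x)$ cuts out a closed subset of the Hausdorff $V_{H^0}\times V_{H^0}$, hence of $K_H\times K_{L^0}$.  Similarly choose a compact Hausdorff $K_G\subseteq V_G$ around $1_{\rg(l_0)}$.  The action map $\mu\colon G^1\times_{G^0}L^1\to L^1$, $(g,l)\mapsto gl$, is an open surjection, being the composite of $\phi$ with the first projection $p_1\colon L^1\times_B L^1\to L^1$, which is itself the pull-back of the open surjection~$\pi$.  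The plan is to produce a compact neighbourhood of $l_0$ in $L^1$ as the image under $\mu$ of a compact subset of $K_G\times_{G^0} L^1$ lying over $K_B$.  This construction is the step I expect to be the main obstacle: a principal $G$-bundle in this purely topological setting need not admit a continuous local section of $\pi$, so the required compact piece of $L^1$ must be assembled from the homeomorphism $\phi$, the Hausdorff open $V_L$ constructed above, and the compacts $K_G$ and $K_B$, rather than by pulling back along an auxiliary section.
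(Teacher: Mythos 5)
Your treatment of the Hausdorff and locally Hausdorff cases is correct and is essentially the paper's own argument: you transport the diagonal of $L^1$ through the principal-bundle homeomorphism~\eqref{eq:fibration_principal_bundle} to $u(G^0)\times_{\s,G^0,\rg}L^1$ and use that the units form a (relatively) closed subset of (a Hausdorff chart of) $G^1$, while closedness of $L^1\times_B L^1$ comes from Hausdorffness of (a chart of) $B$. The paper packages the localisation through the criterion that for an open surjection $f\colon X\onto Y$ the base is (locally) Hausdorff if and only if $X\times_Y X$ is (locally) closed in $X\times X$, whereas you localise by hand with the charts $V_B$, $V_G$, $V_L$; both routes work, and your shrinking of $V_G$ so that $u(\s(V_G))\subseteq V_G$ (using idempotence of $u\circ\s$) is a fine way to make the equaliser argument local.

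The local compactness part, however, is a genuine gap, which you in fact flag yourself. The plan of taking ``the image under $\mu$ of a compact subset of $K_G\times_{G^0}L^1$ lying over $K_B$'' is circular as stated: to exhibit such a compact subset whose image is a neighbourhood of $l_0$ you would need a compact piece of $L^1$ mapping onto a neighbourhood of $\pi(l_0)$ in $B$, which is precisely the compact neighbourhood of $l_0$ you are trying to build, and, as you note, there is no continuous local section of $\pi$ to pull $K_B$ back along. The missing idea is to prove compactness directly by a net argument rather than by a construction. Inside your Hausdorff neighbourhood $V_L$, choose a compact neighbourhood $D'\subseteq \pi(V_L)$ of $\pi(l_0)$ (possible since $\pi$ is open and $K_B$ is compact Hausdorff) and set $D\defeq\{k\in V_L\mid \pi(k)\in D'\}$, which is relatively closed in $V_L$. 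Given a net in $D$, pass to a subnet whose image converges in $D'$, lift the convergent net of images through the open map $\pi$ to a convergent net in $V_L$ (see \cite{Williams:crossed-products}*{Proposition~1.15}), and use~\eqref{eq:fibration_principal_bundle} to write the original net as $g_i\cdot k'_i$ with $g_i\in K_G$ (this is where your choice of $V_L$, forcing differences into $V_G\supseteq K_G$, is used); compactness of $K_G$ yields a convergent subnet of $(g_i)$, hence of the original net, whose limit lies in $D$ by relative closedness. Thus $D$ is a compact Hausdorff neighbourhood of $l_0$; this is exactly how the paper finishes, and without an argument of this kind your outline does not establish local compactness of $L^1$.
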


\begin{proof}
  The proofs for the Hausdorff and locally Hausdorff case are the
  same, merely adding or removing the word ``locally'' where needed.
  The main tool is the following.  Let \(f\colon X\onto Y\)
  be a continuous open surjection.  Then~\(Y\)
  is (locally) Hausdorff if and only if
  \(\{(x_1,x_2)\in X\times X\mid f(x_1)=f(x_2)\}\)
  is (locally) closed in \(X\times X\) (see
  \cite{Buss-Meyer:Actions_groupoids}*{Proposition~2.15} and
  \cite{Meyer-Zhu:Groupoids}*{Proposition~9.18}).

  Assume first that \(G^i\) and~\(H^i\) are (locally) Hausdorff for
  \(i=0,1\).  Hence \(L^0=G^0\) is (locally) Hausdorff.  The
  space~\(L^1\) is (locally) Hausdorff if and only if the diagonal in
  \(L^1\times L^1\) is (locally) closed; this is the above criterion
  applied to~\(\Id_{L^1}\).

  Since~\(G^1\) is (locally) Hausdorff, the diagonal in \(G^1\times
  G^1\) is (locally) closed.  Its preimage under the continuous map
  \(G^1\to G^1\times G^1\), \(g\mapsto (g,1_{\s(g)})\), is
  \(G^0\subseteq G^1\).  Since preimages of (locally) closed subsets are
  (locally) closed, \(G^0\subseteq G^1\) is (locally) closed.  Then the
  preimage of \(G^0\subseteq G^1\) under the coordinate projection
  \(G^1\times_{\s,L^0,\rg} L^1\to G^1\) is also (locally) closed in
  \(G^1\times_{\s,L^0,\rg} L^1\).  The
  homeomorphism~\eqref{eq:fibration_principal_bundle} identifies this
  with the diagonal in~\(L^1\) as a subspace of the fibre product
  \(L^1\times_{H^1\times_{H^0} L^0} L^1\).

  The criterion above also applies to the open surjection
  in~\eqref{eq:groupoid_fibration}.  The space
  \(H^1\times_{\s,H^0,F^0} L^0\)
  is (locally) Hausdorff because it is a subspace of the (locally)
  Hausdorff space \(H^1\times L^0\).
  Hence \(L^1\times_{H^1\times_{H^0} L^0} L^1\)
  is (locally) closed in \(L^1\times L^1\).

  If~\(A\) is (locally) closed in~\(B\) and~\(B\) is (locally) closed
  in~\(C\), then~\(A\) is (locally) closed in~\(C\).  Thus the results
  of the previous two paragraphs together say that the diagonal is
  (locally) closed in \(L^1\times L^1\) as desired.

  Now assume that \(G^i\)
  and~\(H^i\)
  are locally Hausdorff and locally compact for \(i=0,1\).
  We have already seen that \(L^0\)
  and~\(L^1\)
  are locally Hausdorff.  And \(L^0=G^0\)
  is locally compact as well.  We must show that each
  \(l\in L^1\)
  has a compact, Hausdorff neighbourhood; then any neighbourhood
  of~\(l\) contains a compact, Hausdorff neighbourhood.  Let
  \(A\subseteq H^1\times_{H^0} L^0\)
  be a compact, Hausdorff neighbourhood of~\((F^1(l),\s(l))\).
  Then \(\hat{A}\defeq (F^1,\s)^{-1}(A)\)
  is a neighbourhood of~\(l\),
  so we may restrict attention to this subspace of~\(L^1\).
  Choose a compact, Hausdorff neighbourhood~\(B\)
  of~\(1_{\rg(l)}\)
  in~\(G^1\).
  The homeomorphism~\eqref{eq:fibration_principal_bundle} maps
  \(B\times_{\s,L^0,\rg}\hat{A}\)
  onto a neighbourhood of~\((l,l)\)
  in \(L^1\times_{H^1\times_{H^0} L^0} L^1\).
  Hence there is an open neighbourhood~\(C\)
  of~\(l\)
  in~\(\hat{A}\)
  such that all \((l_1,l_2)\in C\times C\)
  with \((F^1,\s)(l_1)=(F^1,\s)(l_2)\)
  have \(l_1l_2^{-1}\in B\).
  We may assume~\(C\)
  Hausdorff because we already know that~\(L^1\)
  is locally Hausdorff.  Since \((F^1,\s)\)
  is open, the subset \((F^1,\s)(C)\)
  is open in \(A\subseteq H^1\times_{H^0} L^0\),
  so it contains a compact neighbourhood~\(D'\)
  of \((F^1(l),\s(l))\).
  Since~\(A\)
  is Hausdorff, compact subsets are closed.  So
  \(D\defeq \{k\in C \mid (F^1,\s)(k)\in D'\}\)
  is relatively closed in~\(C\).

  Let~\((k_i)_{i\in I}\) be a net in~\(D\).  We claim that some subnet
  converges in~\(C\).  First, since~\(D'\) is compact, we can choose a
  subnet~\((k'_j)_{j\in J}\) such that the net \((F^1,\s)(k'_j)\)
  converges in~\(D'\).  To simplify notation, we assume that already
  \((F^1,\s)(k_i)\) converges.  Since the map
  in~\eqref{eq:groupoid_fibration} is open, we may lift
  \((F^1,\s)(k_i)\) to a convergent net~\((k'_j)_{j\in J}\) in~\(C\)
  (see \cite{Williams:crossed-products}*{Proposition~1.15}).  Lifting
  means that the index set~\(J\) maps to~\(I\) by a cofinal map and
  \((F^1,\s)(k_{i(j)}) = (F^1,\s)(k'_j)\).  Once again, we simplify
  notation by assuming that the net~\(k'\) is indexed by the same
  directed set~\(I\), so we have \((F^1,\s)(k_i) = (F^1,\s)(k'_i)\).
  Hence~\eqref{eq:fibration_principal_bundle} gives \(g_i\in G^1\)
  with \(g_i\cdot k'_i = k_i\).  Since \(k_i,k'_i\in C\), even
  \(g_i\in B\).  Since~\(B\) is compact, we may choose a convergent
  subnet of~\((g_i)\).  As before, we simplify notation by assuming
  that~\((g_i)\) itself converges.  Since the multiplication is
  continuous and \((g_i)\) and~\((k'_i)\) converge, it follows
  that~\((k_i)\) converges towards some limit point in~\(C\).

  Since~\(D\) is relatively closed in~\(C\), the limits of nets
  in~\(D\) that converge in~\(C\) belong to~\(D\).  Thus every net
  in~\(D\) has a convergent subnet, that is, \(D\) is compact.
  It is Hausdorff as well by construction.
\end{proof}

The following proposition describes when~\(H\) is Hausdorff.  This is
unrelated to \(L\) or~\(G\) being Hausdorff because in the example
after Lemma~\ref{lem:action_space_trafo-groupoid}, \(L\) and~\(G\) are
always Hausdorff, but~\(H\) is only locally Hausdorff.

\begin{proposition}
  \label{pro:condition-Hausdorff}
  Let \(F\colon L\to H\) be a fibration of topological groupoids with
  Hausdorff object spaces.  If~\(H\) is Hausdorff, then~\(G^1\) is
  closed in~\(L^1\).  Conversely, if~\(F^0\) is open and surjective
  and~\(G^1\) is closed in~\(L^1\), then~\(H\) is Hausdorff.
\end{proposition}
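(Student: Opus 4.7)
The plan is to reduce both directions to the following general fact about topological groupoids: for any topological groupoid~$H$ with Hausdorff object space, the arrow space~$H^1$ is Hausdorff if and only if the image $u(H^0)$ of the unit map is closed in~$H^1$.  Granting this for a moment, the forward direction is routine: if~$H^1$ is Hausdorff, then $u(H^0)$ is closed in~$H^1$ because it is the equaliser of the continuous maps $\Id_{H^1}$ and $u\circ s\colon H^1\to H^1$ into the Hausdorff space~$H^1$.  The very definition of the fibre of~$F$ gives the identification $G^1=(F^1)^{-1}(u(H^0))$, and continuity of~$F^1$ then transfers closedness from $u(H^0)$ to~$G^1\subseteq L^1$.

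For the converse, Lemma~\ref{lem:F1_F0_open_surjective} upgrades the hypothesis on~$F^0$ to the statement that $F^1\colon L^1\to H^1$ is an open surjection, hence a quotient map.  Using again $G^1=(F^1)^{-1}(u(H^0))$, the quotient-map property immediately translates the assumed closedness of~$G^1$ in~$L^1$ into closedness of~$u(H^0)$ in~$H^1$, after which the general fact quoted above yields that~$H^1$ is Hausdorff.

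It remains to verify the non-trivial direction of that general fact.  The trick is to use the continuous map $\mu\colon H^1\times_{s,H^0,s} H^1\to H^1$, $(h_1,h_2)\mapsto h_1 h_2^{-1}$: a short direct computation shows that $\mu^{-1}(u(H^0))$ coincides with the diagonal~$\Delta_{H^1}$ as a subspace of the fibre product.  If $u(H^0)$ is closed in~$H^1$, this identifies $\Delta_{H^1}$ as a closed subset of $H^1\times_s H^1$.  Since~$H^0$ is Hausdorff and $s\colon H^1\onto H^0$ is an open surjection, the fibre product $H^1\times_s H^1$ is itself closed in $H^1\times H^1$ by the open-surjection criterion invoked in the proof of Theorem~\ref{the:locally_Hausdorff_compact}.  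Combining these two closedness statements gives that $\Delta_{H^1}$ is closed in $H^1\times H^1$, so~$H^1$ is Hausdorff as desired.

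The main conceptual obstacle is recognising the reformulation: Hausdorffness of~$H^1$ is equivalent to closedness of the unit subspace (given $H^0$ Hausdorff), and the fibre~$G^1$ of a groupoid fibration is exactly $(F^1)^{-1}(u(H^0))$.  Once these two observations are in place, the argument reduces to a formal invocation of the quotient-map nature of~$F^1$, with no need to analyse the induced ``action'' of~$H$ on~$G$ or the fine structure of the fibre product in~\eqref{eq:groupoid_fibration}.
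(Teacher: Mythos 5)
Your proof is correct and takes essentially the same route as the paper: both directions rest on the identification \(G^1=(F^1)^{-1}(u(H^0))\), on Lemma~\ref{lem:F1_F0_open_surjective} to turn the hypothesis on \(F^0\) into \(F^1\) being an open surjection, and on the criterion that \(H\) is Hausdorff if and only if its unit space is closed in \(H^1\) (given that \(H^0\) is Hausdorff), your quotient-map step being the same argument as the paper's observation that \(F^1(L^1\setminus G^1)=H^1\setminus H^0\) is open. The only difference is that the paper cites this criterion from \cite{Buss-Exel-Meyer:Reduced}*{Lemma~5.2}, whereas you prove its nontrivial direction directly via \((h_1,h_2)\mapsto h_1h_2^{-1}\) on \(H^1\times_{\s,H^0,\s}H^1\) together with the open-surjection criterion for Hausdorffness; that self-contained verification is correct.
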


\begin{proof}
  The topological groupoid~\(H\) is Hausdorff if and only if its unit
  space~\(H^0\) is Hausdorff and closed in~\(H^1\) by
  \cite{Buss-Exel-Meyer:Reduced}*{Lemma~5.2}.  By definition, \(G^1\)
  is the inverse image of the units of~\(H\) under~\(F^1\).
  So~\(G^1\) is closed in~\(L^1\) if~\(H\) is Hausdorff.  Conversely,
  if \(F^1\) is open and surjective and~\(G^1\) is closed, then
  \(F^1(L^1\backslash G^1) = H^1\backslash H^0\) is open in~\(H^1\)
  because~\(F^1\) is open.  Since we assume~\(H^0\) to be Hausdorff,
  \cite{Buss-Exel-Meyer:Reduced}*{Lemma~5.2} shows that~\(H^1\) is
  Hausdorff.  Lemma~\ref{lem:F1_F0_open_surjective} shows that~\(F^0\)
  is an open surjection if and only if~\(F^1\) is.
\end{proof}

\begin{example}
  \label{exa:Hausdorffness-isotropy-quotient}
  Let \(G\into L\onto H=L/G\) be the extension (hence fibration)
  associated to an étale groupoid~\(L\) and its open isotropy
  subgroupoid~\(G\) as in~§\ref{sec:groupoid_extensions}.  By
  Proposition~\ref{pro:condition-Hausdorff}, \(H\)~is Hausdorff if and
  only if~\(G^1\) is closed in~\(L^1\) (compare
  \cite{Sims-Williams:primitive_ideals}*{Proposition~2.5}).  The
  space~\(L^1\) need not be Hausdorff for this to hold.
\end{example}

\begin{remark}
  \label{rem:H_etale}
  A groupoid~\(H\) is étale if and only if~\(H^0\) is open in~\(H^1\)
  (see \cite{Resende:Etale_groupoids}*{Theorem~1.4} and recall our
  standing assumption that \(\s\) and~\(\rg\) be open).  As in the
  proof of Proposition~\ref{pro:condition-Hausdorff}, this implies
  that~\(G^1\) is open in~\(L^1\) if~\(G\) is the fibre of a fibration
  \(F\colon L\to H\).  Conversely, if~\(F^0\) is an open surjection
  and~\(G^1\) is open in~\(L^1\), then \(F^1(G^1) = \{1_x \mid x\in
  H^0\}\) is open in~\(H^1\), so that~\(H\) is étale.
\end{remark}

\section{Haar systems and groupoid fibrations}
\label{sec:Haar}

Let \(F\colon L\to H\) be a groupoid fibration with fibre~\(G\).  We
assume that \(G\) and~\(H\) are locally Hausdorff, locally compact
groupoids with Hausdorff object spaces.  Then~\(L\) is also locally
Hausdorff and locally compact by
Theorem~\ref{the:locally_Hausdorff_compact}.

\begin{theorem}
  \label{the:inherit_Haar_measures}
  Let \((\lambda^x)_{x\in G^0}\)
  and~\((\mu^y)_{y\in H^0}\)
  be Haar systems on \(G\)
  and~\(H\).
  These induce a Haar system~\((\nu^x)_{x\in L^0}\) on~\(L\),
  given by
  \begin{equation}
    \label{eq:Haar_L}
    \int_{L^1} f(l)\, \dd\nu^x(l)
    = \int_{H^1} \int_{L^1} f(l)
    \,\dd \dot\lambda^{(h,x)}(l)\,\dd \mu^{F^0(x)}(h)
  \end{equation}
  for the continuous family of measures~\(\dot\lambda\) along the
  fibres of the open surjection \((F^1,\rg)\colon L^1\onto
  H^1\times_{\rg,H^0,F^0} L^0\) given by \(\int_L f(l) \,\dd
  \dot\lambda^{(h,x)}(l) = \int_G f(k g) \,\dd \lambda^{\s(k)}(g)\)
  for any \(k \in L^1\) with \(F^1(k)=h\) and \(\rg(k)=x\).
\end{theorem}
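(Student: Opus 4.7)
The plan is to organize the verification into four steps: well-definedness of the auxiliary family \(\dot\lambda\), its continuity, the support and left-invariance of~\(\nu^x\), and the continuity of \(x\mapsto \int f\,\dd\nu^x\).

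First, to see that \(\dot\lambda^{(h,x)}\) does not depend on the choice of~\(k\), I use the right-multiplication analogue of Proposition~\ref{pro:fibration_basic_action_GL}, which follows by applying inversion: right multiplication makes \(L^1\) into a principal \(G\)\nb-bundle with bundle projection \((F^1,\rg)\). Thus two lifts \(k,k'\) of \((h,x)\) differ by a unique \(g_0\in G^1\) with \(k'=k g_0\), \(\rg(g_0)=\s(k)\), \(\s(g_0)=\s(k')\). The substitution \(g'=g_0 g\) in the integral, combined with the left invariance of the Haar system~\(\lambda\) on~\(G\) (whereby pushforward along left translation by~\(g_0\) sends \(\lambda^{\s(g_0)}\) to \(\lambda^{\rg(g_0)}\)), immediately yields \(\int f(k'g)\,\dd\lambda^{\s(k')}(g)=\int f(kg')\,\dd\lambda^{\s(k)}(g')\).

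The main obstacle is the second step: showing that \((h,x)\mapsto \int f\,\dd\dot\lambda^{(h,x)}\) is continuous on \(H^1\times_{\rg,H^0,F^0} L^0\) for every \(f\in \Cont_c(L^1)\). The difficulty is that the definition involves the implicit choice of a lift~\(k\), so naive continuity is not apparent. I would handle this by a net argument: given \((h_i,x_i)\to(h,x)\) and a lift \(k\) of \((h,x)\), I use that the open surjection \((F^1,\rg)\colon L^1\onto H^1\times_{\rg,H^0,F^0} L^0\) allows the lifting of convergent nets (\cite{Williams:crossed-products}*{Proposition~1.15}) to obtain \(k_i\to k\) with \((F^1,\rg)(k_i)=(h_i,x_i)\). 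Since \(\s(k_i)\to \s(k)\) in~\(G^0\) and \(g\mapsto f(k_i g)\) converges uniformly on compact sets to \(g\mapsto f(kg)\) (using local Hausdorffness and the fact that \(k_i g\to kg\) uniformly for \(g\) in any fixed compact set, plus a standard cut-off argument), continuity of~\(\lambda\) gives continuity of the fibre integral. A subtlety here is that \(g\mapsto f(kg)\) may not lie in \(\Cont_c(G^{\s(k)})\) globally because of non-Hausdorffness, but one reduces to local Hausdorff pieces by a partition of unity argument as in \cite{Buss-Meyer:Actions_groupoids}.

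Granted the continuous family~\(\dot\lambda\), the remaining steps are routine. The measure~\(\nu^x\) is concentrated on \(L^x\defeq \rg^{-1}(x)\) because each \(\dot\lambda^{(h,x)}\) is, as a pushforward of the full-support measure \(\lambda^{\s(k)}\) along the homeomorphism \(g\mapsto kg\) from \(G^{\s(k)}\) onto \(\{l\in L^1\mid F^1(l)=h,\ \rg(l)=x\}\); taking the union over \(h\in H^{F^0(x)}\) and using full support of~\(\mu^{F^0(x)}\) gives full support of \(\nu^x\) on \(L^x\). Continuity of \(x\mapsto \int f\,\dd\nu^x\) follows from the continuity of \(\dot\lambda\) and~\(\mu\) by a Fubini-type argument on the iterated integral, using that the inner integrand has compact support in~\(h\). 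For left invariance, given \(l\in L^1\) and \(f\in \Cont_c(L^1)\), I unfold
\[
\int f(l m)\,\dd\nu^{\s(l)}(m)
= \int_{H^1}\int_G f(lk'g)\,\dd\lambda^{\s(k')}(g)\,\dd\mu^{F^0(\s(l))}(h'),
\]
where \(k'\) is any lift of \((h',\s(l))\). Since \(F^1(lk')=F^1(l)h'\) and \(\rg(lk')=\rg(l)\), the element \(lk'\) is a valid lift for \((F^1(l)h', \rg(l))\), so the inner integral equals \(\int f\,\dd\dot\lambda^{(F^1(l)h',\rg(l))}\). The substitution \(h=F^1(l)h'\) and left invariance of~\(\mu\) (applied using \(\rg(F^1(l))=F^0(\rg(l))\)) convert this to \(\int f\,\dd\nu^{\rg(l)}\), completing the proof.
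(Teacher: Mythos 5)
Your construction and most of your verifications coincide with the paper's own proof: the same factorisation of \(\rg\colon L^1\to L^0\) through the principal \(G\)\nb-bundle projection \((F^1,\rg)\colon L^1\onto H^1\times_{\rg,H^0,F^0}L^0\) followed by the projection to \(L^0\), the same definition of \(\dot\lambda\) with independence of the lift~\(k\) via left invariance of~\(\lambda\), the same pull-back of~\(\mu\) along~\(F^0\), and essentially the same invariance computation (the paper phrases it as \(L\)\nb-equivariance of the two maps and \(L\)\nb-invariance of the two families of measures, which unwinds to exactly your substitution \(h=F^1(l)h'\)).

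The genuine gap is in the step you yourself call the main obstacle, the continuity of~\(\dot\lambda\). Your net argument rests on the claim that \(g\mapsto f(k_ig)\) converges uniformly on compacta to \(g\mapsto f(kg)\). For the functions one must treat here --- quasi-continuous~\(f\), that is, elements of \(\Cont_c(U)\) for a Hausdorff open subset \(U\subseteq L^1\) extended by zero --- this can already fail pointwise: since~\(L^1\) is only locally Hausdorff, the net \(k_ig\) may converge simultaneously to a point of~\(U\) where~\(f\) is nonzero and to \(kg\notin U\), so \(f(k_ig)\not\to f(kg)=0\). A partition of unity shrinks supports but does not remove this phenomenon, because the obstruction is non-uniqueness of limits, not the size of the support; so the dominated-convergence-style conclusion does not follow as stated. (A smaller point: lifting convergent nets along an open surjection only produces a subnet, which suffices for a continuity proof but should be said.) The paper avoids pointwise arguments altogether: it pulls~\(\lambda\) back to a continuous family on the fibres of the multiplication map \(L^1\times_{\s,G^0,\rg}G^1\to L^1\) using Lemma~1.2 of \cite{Renault:Representations} and then descends along the orbit-space projection of the principal \(G\)\nb-bundle, that is, along \((F^1,\rg)\), using Lemma~1.3 there, applying both results over Hausdorff open subsets of the locally Hausdorff base --- which is also what continuity of~\(\dot\lambda\) means in this setting. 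Your continuity step needs to be repaired along these lines; the remaining steps (support of~\(\nu^x\), the Fubini-type continuity of~\(\nu\), and left invariance) are fine.
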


\begin{proof}
  The Haar systems for \(G\)
  and~\(H\)
  are continuous families of measures along the fibres of the maps
  \(\rg\colon G^1 \onto G^0\)
  and \(\rg\colon H^1 \onto H^0\).
  We need a continuous family of measures along the fibres of
  \(\rg\colon L^1 \onto L^0\).
  To use our data, we are going to factorise \(\rg\colon L^1 \onto L^0\)
  into two maps related to \(G\) and~\(H\).

  We apply the inversion in~\(L\) to the principal bundle in
  Proposition~\ref{pro:fibration_basic_action_GL} to see that the
  right action of~\(G\) on~\(L^1\) also gives a principal bundle,
  with bundle projection
  \begin{equation}
    \label{eq:Haar_1}
    (F^1,\rg)\colon L^1\onto H^1\times_{\rg,H^0,F^0} L^0.
  \end{equation}
  The Haar system for~\(G\) induces a continuous family of measures on
  the fibres of any principal \(G\)\nb-bundle,
  see~\cite{Renault:Representations}.  In our case, this gives the
  following family of measures~\(\dot\lambda\).  For \((h,x)\in
  H^1\times_{\s,H^0,F^0} L^0\), choose some
  \(k\in L^1\) with \(F^1(k)=h\) and \(\rg(k)=x\).
  Then the map \(G^{\s(k)} \congto (F^1,\rg)^{-1}(h,x)\),
  \(g\mapsto k\cdot g\), is a homeomorphism.  Thus we may
  transfer the measure~\(\lambda^{\s(k)}\) to a
  measure~\(\dot\lambda^{(h,x)}\) on this fibre.  This measure does
  not depend on the choice of~\(k\) because~\((\lambda^x)_{x\in
    G^0}\) is left invariant and~\(k\) is unique up to right
  multiplication by some \(g_0\in G^1\).  We are going to use
  \cite{Renault:Representations}*{Lemmas 1.2 and 1.3} to check that
  the family~\(\dot\lambda\) is continuous.

  First we pull back~\((\lambda^x)\)
  to the family of measures \(l\mapsto \lambda^{\s(l)}\)
  on the fibres of the map
  \(m\colon L^1\times_{\s,G^0,\rg} G^1\onto L^1\),
  \((l,g)\mapsto l\cdot g\).
  This family is continuous by \cite{Renault:Representations}*{Lemma
    1.2}.  The map~\(m\)
  is a \(G\)\nb-equivariant
  map if the \(G\)\nb-action
  on \(L^1\times_{\s,G^0,\rg} G^1\)
  is defined by \((l,g_1)\cdot g_2\defeq (l,g_1g_2)\)
  and on~\(L^1\)
  as usual.  Both \(G\)\nb-actions
  are parts of principal bundles, and the induced map on orbit spaces
  is the orbit space projection of~\(L^1\).
  \cite{Renault:Representations}*{Lemma 1.3} says that the induced
  family of measures for the orbit space projection \(L^1\onto L^1/G\)
  is also continuous.  More precisely, Renault defines principal
  bundles using free and proper actions; we use basic actions instead,
  that is, we only require a homeomorphism
  \(X\times_{\s,G^0,\rg} G^1 \congto X\times_{X/G} X\),
  \((x,g)\mapsto (x,x\cdot g)\).  What we call a principal bundle is one
  in Renault's notation if and only if the orbit space is Hausdorff by
  \cite{Meyer-Zhu:Groupoids}*{Corollary~9.35}.  Since
  \(H^1\times_{\s,H^0,F^0} L^0\)
  is locally Hausdorff, we may apply Renault's result to the
  restrictions of our principal bundle to all Hausdorff open subsets
  of \(H^1\times_{\s,H^0,F^0} L^0\).
  This is exactly the meaning of ``continuity'' for a family of
  measures over a locally Hausdorff base space.

  The groupoid~\(L\) acts on \(H^1\times_{\s,H^0,F^0} L^0\) with
  anchor map \((h,x)\mapsto x\) and multiplication \(l\cdot (h,\s(l))
  = (F^1(l)\cdot h,\rg(l))\).  The map \((F^1,\rg)\)
  in~\eqref{eq:Haar_1} is \(L\)\nb-equivariant.  The family of
  measures~\(\dot\lambda\) on the fibres of this map is
  \(L\)\nb-invariant because~\(\lambda\) is \(G\)\nb-invariant.

  Next we construct a continuous family of measures along the fibres
  of
  \begin{equation}
    \label{eq:Haar_2}
    H^1\times_{\rg,H^0,F^0} L^0 \onto L^0,\qquad (h,x)\mapsto x.
  \end{equation}
  We simply take the measure~\(\mu^{F^0(x)}\) on the fibre of~\(x\).
  This family is the pull-back of~\((\mu^y)_{y\in H^0}\) along \(F^0\colon
  L^0\to H^0\), so it is continuous by
  \cite{Renault:Representations}*{Lemma 1.2}.  The map
  in~\eqref{eq:Haar_2} is also \(L\)\nb-equivariant.  The family of
  measures~\((\mu^{F^0(x)})_{x\in L^0}\) is \(L\)\nb-invariant
  because~\((\mu^y)_{y\in H^0}\) is \(H\)\nb-invariant.

  Now we combine the continuous families of measures in
  \eqref{eq:Haar_1} and~\eqref{eq:Haar_2} to a continuous family of
  measures~\((\nu^x)_{x\in L^0}\)
  along the fibres of the composite map \(\rg\colon L^1\onto L^0\).
  The integral of a Borel function \(f\colon L^1\to\C\)
  with quasi-compact support against this family of measures is given
  by~\eqref{eq:Haar_L}.
  The integration over~\(\dot\lambda\) in~\eqref{eq:Haar_L}
  maps quasi-continuous functions on~\(L^1\)
  to quasi-continuous functions on \(H^1\times_{\rg,H^0,F^0} L^0\),
  and the second integration over~\(\mu\)
  maps quasi-continuous functions on \(H^1\times_{\rg,H^0,F^0} L^0\)
  to (quasi)continuous functions on~\(H^0\).
  Hence~\(\nu\) is a continuous family of measures.  The
  \(L\)\nb-invariance
  of \(\dot\lambda\)
  and~\(\mu\)
  implies that~\(\nu\)
  is \(L\)\nb-invariant.
  It is also clear that the support of~\(\nu^x\) is all of~\(L^x\).
\end{proof}

\begin{example}
  Our theorem applies, in particular, to extensions of locally
  Hausdorff and locally compact groupoids \(G\into L\onto H\)
  with the same unit space \(G^0=L^0\congto H^0\).  For an extension
  of locally compact groups \(G\into L\onto H=L/G\),
  Theorem~\ref{the:inherit_Haar_measures} gives the usual formula for
  the Haar measures on~\(L\) in terms of the Haar measures on \(G\)
  and~\(H\).  For a twist, that is, \(G=L^0\times \Torus\) with
  trivial conjugation action of~\(L\) on~\(\Torus\), it is already
  observed in~\cite{Muhly-Williams:Continuous-traceII} that a Haar
  system on~\(H\) induces one on~\(L\).  Implicitly, this uses
  the normalised Haar measure on the compact group~\(\Torus\).

  For the transformation groupoid (or semidirect product) \(L=H\ltimes
  G\) of a classical action of a locally compact group~\(H\) on a
  Hausdorff, locally compact groupoid~\(G\), the existence of the Haar
  system on~\(L\) is proved in
  \cite{Kaliszewski-Muhly-Quigg-Williams:Coactions_Fell}*{Proposition~6.4}
  under an extra ``invariance'' condition for the \(H\)\nb-action
  on~\(G\).  This condition rules out some basic examples such as the
  \(ax+b\)-group \(\R\ltimes \R_{>0}\).  It is not necessary by
  Theorem~\ref{the:inherit_Haar_measures}, which needs no condition on
  the Haar systems of \(G\) and~\(H\) and even allows \(G\) and~\(H\)
  to be locally Hausdorff.
\end{example}

\section{Crossed products}
\label{sec:crossed}

As before, let \(F\colon L\to H\) be a groupoid fibration with
fibre~\(G\).  We assume that \(G\) and~\(H\) are locally Hausdorff,
locally compact groupoids with Hausdorff object spaces and with Haar
systems \(\lambda\) and~\(\mu\), respectively.  Then~\(L\) is locally
Hausdorff and locally compact by
Theorem~\ref{the:locally_Hausdorff_compact}.  Let~\(\nu\) be the
canonical Haar system on~\(L\) constructed in
Theorem~\ref{the:inherit_Haar_measures}.   The available
disintegration theory of representations also requires that all our
groupoids are second countable and all Fell bundles separable, but
these assumptions may probably be removed with better technology, on
which we are working at the moment.

Let~\(\Banb\)
be a Fell bundle over~\(L\).
By convention, \emph{all Fell bundles are separable, saturated and upper
semicontinuous}.
As in~\cite{Buss-Meyer:Actions_groupoids}, we denote the space of
quasi-continuous sections of~\(\Banb\)
on~\(L\)
by~\(\Sect(L,\Banb)\);
these are finite linear combinations of compactly supported continuous
sections on Hausdorff open subsets of~\(L\),
extended by~\(0\)
outside.
With standard formulas for convolution and involution,
\(\Sect(L,\Banb)\) becomes a \Star{}algebra.  It carries a canonical
bornology, that is, a collection of bounded subsets such that the
convolution and involution are bounded, see
\cite{Buss-Meyer:Actions_groupoids}*{Appendix~B}.  We call a seminorm
or representation on~\(\Sect(L,\Banb)\) bounded if it is bounded on
all bounded subsets.  The so-called inductive limit topology is the
locally convex topology generated by the bounded seminorms.

The \emph{section \(\Cst\)\nb-algebra} \(\Cst(L,\Banb)\) for the Fell
bundle \(\Banb\to L\) is the completion of~\(\Sect(L,\Banb)\) for the
maximal bounded \(\Cst\)\nb-seminorm on~\(\Sect(L,\Banb)\) or,
equivalently, for the maximal \(\Cst\)\nb-seminorm that is continuous
in the inductive limit topology on~\(\Sect(L,\Banb)\).

\emph{We assume that the Disintegration Theorem holds for~\(\Banb\).}
It says that any bounded representation of~\(\Sect(L,\Banb)\) comes
from a representation of the Fell bundle~\(\Banb\).  This implies that
any bounded \(\Cst\)\nb-seminorm is already dominated by a certain
norm called the \(I\)\nb-norm; hence the maximum of all bounded
\(\Cst\)\nb-norms on~\(\Sect(L,\Banb)\) exists.

We may restrict~\(\Banb\) to a Fell bundle over \(G\subseteq L\) and
get a bornological \Star{}algebra \(\Sect(G,\Banb|_G) =
\Sect(G,\Banb)\) and a \(\Cst\)\nb-algebra \(\Cst(G,\Banb|_G)=
\Cst(G,\Banb)\) in the same way.  Similarly, we may restrict to the
subgroupoids \(G_y\subseteq G\) for \(y\in H^0\)
(see~§\ref{sec:equivalences_from_fibration}) and construct
\(\Cst\)\nb-algebras \(\Cst(G_y,\Banb)\).

\emph{We also assume that the Equivalence Theorem holds for
  \(\Cst(G_y,\Banb)\), that is, the groupoid equivalences~\(L_h\) in
  Lemma~\textup{\ref{lem:Lh_locally_closed}} induce Morita--Rieffel
  equivalences between \(\Cst(G_{\rg(h)},\Banb)\) and
  \(\Cst(G_{\s(h)},\Banb)\).}

The Disintegration Theorem and the Equivalence Theorem have been shown
for several classes of Fell bundles over groupoids: for Green twisted
actions of non-Hausdorff groupoids on continuous fields of
\(\Cst\)\nb-algebras over~\(L^0\) in~\cite{Renault:Representations};
for arbitrary (separable and saturated) upper semicontinuous Fell
bundles over Hausdorff groupoids
in~\cite{Muhly-Williams:Equivalence.FellBundles}; and for ordinary
actions of non-Hausdorff groupoids in
\cite{Muhly-Williams:Renaults_equivalence}.  So far, there seems to be
no source that covers Fell bundles and non-Hausdorff groupoids
simultaneously.  We are working on proving these results in this
generality; for now, we must assume these basic technical
results to hold for our proofs to work.

We now construct a pre-Fell bundle over~\(H\), which we will later
complete to a Fell bundle using appropriate \(\Cst\)\nb-norms.  For
\(h\in H^1\), let \(L_h\defeq (F^1)^{-1}(h)\subseteq L^1\) as in
Lemma~\ref{lem:Lh_locally_closed}.  Since~\(H\) is locally Hausdorff,
points in~\(H^1\) are closed.  So~\(L_h\) is closed in~\(L^1\) and
hence locally Hausdorff and locally compact.  Therefore, the space
\(\FellH_h\defeq \Sect(L_h,\Banb)\) of quasi-continuous sections
\(L_h\to \Banb\) is well defined.  We define an involution
\[
\FellH_h\to \FellH_{h^{-1}},\qquad
f^*(l) \defeq \conj{f(l^{-1})},
\]
using that the inversion map restricts to a homeomorphism from~\(L_h\)
to~\(L_{h^{-1}}\) by Lemma~\ref{lem:Lh_locally_closed}.  This map is
bounded, conjugate-linear, and
involutive, that is, \(f^{**} = f\).  We want to define a convolution
\(\FellH_{h_1}\times \FellH_{h_2} \to \FellH_{h_1h_2}\) for
\(h_1,h_2\in H\) by
\begin{equation}
  \label{eq:convolution_FellH}
  (f_1*f_2)(l) \defeq
  \int_{L^1} f_1(l_1)\cdot f_2(l_1^{-1}l) \dd\dot\lambda^{(h_1,\rg(l))}(l_1)
\end{equation}
for \(f_1\in\FellH_{h_1}\), \(f_2\in\FellH_{h_2}\).  We explain why
this formula works.  We integrate over the \(L\)\nb-invariant family
of measures~\(\dot\lambda\) on the fibres of the map
\[
(F^1,\rg)\colon L^1\onto H^1\times_{\rg,H^0,F^0} L^0
\]
as in Theorem~\ref{the:inherit_Haar_measures}.  The support
of~\(\dot\lambda^{(h_1,\rg(l))}\) is the set of all \(l_1\in L^1\)
with \(F^1(l_1)=h_1\) and \(\rg(l_1)=\rg(l)\) or, equivalently,
\(l_1\in L_{h_1}\) with \(\rg(l_1)=\rg(l)\).  Then
\(F^1(l_1^{-1}l) = h_1^{-1}h = h_2\), so the integral
in~\eqref{eq:convolution_FellH} only sees values of~\(f_1\)
on~\(L_{h_1}\) and of~\(f_2\) on~\(L_{h_2}\), respectively.  The
product \(f_1(l_1)\cdot f_2(l_1^{-1}l)\) belongs to \(\Banb_{l_1}\cdot
\Banb_{l_1^{-1}l} \subseteq\Banb_l\).  Thus \((f_1*f_2)(l)\) is a well
defined element of~\(\Banb_l\).

The same argument as for the convolution in~\(\Sect(L,\Banb)\) shows
that \(f_1*f_2\) is quasi-continuous, that is, belongs to
\(\Sect(L_{h_1 h_2},\Banb)\) (see
\cite{Muhly-Williams:Renaults_equivalence}*{Proposition~4.4}).  It
suffices to prove this if both \(f_1\) and~\(f_2\) are compactly
supported continuous functions on some Hausdorff open subsets of
\(L_{h_1}\) and~\(L_{h_2}\), respectively.  We may further use
partitions of unity to decompose \(f_1\) and~\(f_2\) into functions
with smaller supports, so that the product of the Hausdorff open
subsets on which \(f_1\) and~\(f_2\) live is again Hausdorff
in~\(L_{h_1h_2}\).  Then the continuity of~\(\dot\lambda\) implies
that~\(f_1*f_2\) is continuous with
compact support on the relevant Hausdorff open subset
of~\(L_{h_1h_2}\).  Since we may choose the same partition of unity
for all functions \(f_1,f_2\) with a given support, the convolution is
bounded; that is, if~\(f_i\) for \(i=1,2\) run through bounded subsets
of \(\Sect(L_{h_i},\Banb)\), then the set of products \(f_1*f_2\) is
bounded.

The convolution in~\eqref{eq:convolution_FellH} is bilinear.  It is
associative because~\(\dot\lambda\) is \(L\)\nb-invariant:
\begin{align*}
  (f_1*f_2)*f_3(l)
  &= \int (f_1*f_2)(l_2) f_3(l_2^{-1} l)
  \dd\dot\lambda^{(h_1h_2,\rg(l))}(l_2)
  \\ &= \iint f_1(l_1) f_2(l_1^{-1}l_2) f_3(l_2^{-1} l)
  \dd\dot\lambda^{(h_1,\rg(l_2))}(l_1)
  \dd\dot\lambda^{(h_1h_2,\rg(l))}(l_2),\\
  f_1*(f_2*f_3)(l)
  &= \int f_1(l_1) (f_2*f_3)(l_1^{-1} l)
  \dd\dot\lambda^{(h_1,\rg(l))}(l_1)
  \\ &= \iint f_1(l_1) f_2(l_2) f_3(l_2^{-1} l_1^{-1} l)
  \dd\dot\lambda^{(h_2,\s(l_1))}(l_2)
  \dd\dot\lambda^{(h_1,\rg(l))}(l_1)
  \\&= \iint f_1(l_1) f_2(l_1^{-1}l_2) f_3(l_2^{-1} l)
  \dd\dot\lambda^{(h_1h_2,\rg(l_1))}(l_2)
  \dd\dot\lambda^{(h_1,\rg(l))}(l_1).
\end{align*}
The identity \(f_1^**f_2^* = (f_2*f_1)^*\) follows from the
\(L\)\nb-invariance of~\(\dot\lambda\):
\begin{multline*}
  (f_1^**f_2^*)(l)
  = \int f_1^*(l_1) f_2^*(l_1^{-1}l) \dd\dot\lambda^{(h_1,\rg(l))}(l_1)
  = \int \conj{f_1(l_1^{-1}) f_2(l^{-1}l_1)}
  \dd\dot\lambda^{(h_1,\rg(l))}(l_1)
  \\= \int \conj{f_1(l_1^{-1}l^{-1}) f_2(l_1)}
  \dd\dot\lambda^{(h_1,\s(l))}(l_1)
  = (f_2*f_1)^*(l).
\end{multline*}

Instead of a topology on the bundle~\((\FellH_h)_{h\in H^1}\), we
specify its space of quasi-continuous sections.  For a Hausdorff, open
subset \(U\subseteq H^1\), let \(V\defeq (F^1)^{-1}(U)\), which is an
open subset of~\(L^1\).  To a quasi-continuous section \(\xi\in
\Sect(V,\Banb)\), we assign a section~\(\tilde\xi\)
of~\((\FellH_h)_{h\in H^1}\) over~\(U\) by \(\tilde\xi(h)\defeq
\xi|_{L_h}\).  We take this as the space of continuous sections with
compact support of the bundle~\((\FellH_h)_{h\in H^1}\) over the
Hausdorff subset~\(U\).  The space of quasi-continuous sections
of~\((\FellH_h)_{h\in H^1}\) over an arbitrary open subset
\(U\subseteq H^1\) is defined as for Banach bundles, as the space of
finite linear combinations of continuous sections over Hausdorff open
subsets.  This is canonically isomorphic to
\(\Sect((F^1)^{-1}(U),\Banb)\) by
\cite{Buss-Meyer:Actions_groupoids}*{Proposition B.2}.  This finishes
the construction of the pre-Fell bundle~\((\FellH_h)_{h\in H^1}\)
over~\(H\).

The assignment \(\xi\mapsto \tilde\xi\) used above also preserves the
algebraic structure, that is, we have \(\widetilde{\xi*\eta} =
\tilde\xi*\tilde\eta\) and \((\tilde\xi)^* = \widetilde{\xi^*}\) for
all \(\xi,\eta\in \Sect(L,\Banb)\).  The equality \((\tilde\xi)^* =
\widetilde{\xi^*}\) follows easily from the definitions of the
involutions.  The equality \(\widetilde{\xi*\eta} =
\tilde\xi*\tilde\eta\) uses how the Haar system~\(\nu\) in
Theorem~\ref{the:inherit_Haar_measures} is built.  Let \(h\in H^1\).
Then \(\widetilde{\xi*\eta}(h)\) is the restriction of~\(\xi*\eta\) to
\(L_h\subseteq L^1\).  And for \(l\in L_h\), that is, \(l\in L^1\) with
\(F^1(l)=h\), this equals
\begin{multline*}
  \bigl( \widetilde{\xi*\eta}(h)\bigr)(l)
  = \xi*\eta(l)
  = \int_L \xi(l_1)\eta(l_1^{-1}l)\,\dd\nu^{\rg(l)}(l_1)
  \\= \int_H \int_L \xi(l_1)\eta(l_1^{-1}l)
  \,\dd\dot\lambda^{(x,\rg(l))}(l_1)\,\dd\mu^{F^0(\rg(l))}(x).
\end{multline*}
The assumption \(F^1(l)=h\) gives \(F^0(\rg(l))=\rg(h)\), so that this
is equal to the section \((\tilde\xi*\tilde\eta)(h) \in
\Sect(L_h,\Banb)\) evaluated at the point~\(l\)
via~\eqref{eq:convolution_FellH}; hence \(\widetilde{\xi*\eta} =
\tilde\xi*\tilde\eta\).  Therefore, the map \(\xi\mapsto \tilde\xi\)
gives a \Star{}algebra isomorphism \(\Sect(L,\Banb)\congto
\Sect(H,\FellH)\).

Next we complete~\((\FellH_h)_{h\in H^1}\) to a
\(\Cst\)\nb-algebraic Fell bundle~\((\bar{\FellH}_h)_{h\in H^1}\).
The groupoids~\(G_y\) inherit Haar systems from~\(G\)
by restricting~\((\lambda^x)\)
to \(x\in (F^0)^{-1}(y)\).
Our space~\(\FellH_{1_y}\)
is exactly the space~\(\Sect(G_y,\Banb)\)
of quasi-continuous sections of the restriction of~\(\Banb\)
to~\(G_y\).
Thus we may complete~\(\Sect(G_y,\Banb)\)
to a \(\Cst\)\nb-algebra
using the maximal bounded \(\Cst\)\nb-norm.

The \(\Cst\)\nb-algebras
\(\Cst(G_y,\Banb)\)
for \(y\in H^0\)
are the fibres of an upper semicontinuous field on~\(H^0\)
with section algebra \(\Cst(G,\Banb)\) or, more precisely,
\(\Cst(G,\Banb|_G)\).
To construct this field, we use the \(G\)\nb-invariant
function \(F^0\colon G^0 = L^0 \to H^0\).
The resulting non-degenerate \Star{}homomorphism
\((F^0)^*\colon \Cont_0(H^0)\to \Mult(\Cst(G,\Banb))\)
takes values in the centre because~\(F^0\)
is \(G\)\nb-invariant.
Thus it turns \(\Cst(G,\Banb)\) into a
\(\Cont_0(H^0)\)-\(\Cst\)\nb-algebra.  The fibre at \(y\in H^0\) for
this \(\Cont_0(H^0)\)-\(\Cst\)\nb-algebra structure
is~\(\Cst(G_y,\Banb)\) because
\(\Sect(G,\Banb)/\Cont_0(H^0\setminus\{y\})\cdot \Sect(G,\Banb) \cong
\Sect(G_y,\Banb)\).

Recall that~\(L_h\) is an equivalence between the groupoids
\(G_{\rg(h)}\) and~\(G_{\s(h)}\).  The formulas we used to define the
pre-Hilbert bimodule \(\FellH_h=\Sect(L_h,\Banb)\) are the usual ones
for an equivalence of Fell bundles.  By assumption, Renault's
Equivalence Theorem holds for the groupoids~\(G_y\) for all \(y\in
H^0\), see \cite{Renault:Representations}*{Corollaire 5.4} and
\cite{Muhly-Williams:Renaults_equivalence}*{Theorem 5.5}.  Hence we
may complete~\(\FellH_h\) to an imprimitivity bimodule
\(\bar{\FellH}_h = \Cst(L_h,\Banb)\) between
\(\Cst(G_{\rg(h)},\Banb)\) and~\(\Cst(G_{\s(h)},\Banb)\).  The
convolution products \(\FellH_{h_1}\times\FellH_{h_2}\to
\FellH_{h_1h_2}\) and the involutions \(\FellH_h\to \FellH_{h^{-1}}\)
defined in~\eqref{eq:convolution_FellH} extend to the
completions~\(\bar\FellH_h\).  The proof uses the \cstar{}identity and
that these convolution products are the left or right bimodule actions
on~\(\FellH_h\) if restricted to units.  Hence the spaces~\(\FellH_h\)
for \(h\in H\) are the fibres of a (saturated) Fell
bundle~\(\bar{\FellH}\) over~\(H\) if we specify a suitable space of
quasi-continuous sections.  This is done via the previously defined
map \(\xi\mapsto \tilde\xi\), which identifies \(\Sect(L,\Banb)\cong
\Sect(H,\FellH)\).  This determines a unique topology on the
bundle~\(\bar\FellH\) with fibres~\(\FellH_h\) and turns it into a
Fell bundle (see
\cite{BussExel:Fell.Bundle.and.Twisted.Groupoids}*{Propositions 2.4
  and~2.7}).

By construction, the map \(\xi\mapsto \tilde\xi\) identifies
\(\Sect(L,\Banb)\) with the dense \Star{}subalgebra
\(\Sect(H,\FellH)\) of the section \(\Cst\)\nb-algebra
\(\Cst(H,\bar{\FellH})\).  This extends to an isomorphism
\(\Cst(L,\Banb)\congto \Cst(H,\FellH)\) of \cstar{}algebras:

\begin{theorem}[Iterated crossed-product decomposition]
  \label{the:sections_full}
  Let \(L\) and \(H\) be second countable, locally Hausdorff and
  locally compact groupoids and let \(F\colon L\to H\) be a groupoid
  fibration with fibre~\(G\).  Assume that \(G\) and~\(H\) carry Haar
  systems, and endow~\(L\) with the Haar system constructed in
  Theorem~\textup{\ref{the:inherit_Haar_measures}}.  Let~\(\Banb\) be
  a saturated, separable Fell bundle over~\(L\).
  Then~\(\Cst(L,\Banb)\) is isomorphic to the section
  \(\Cst\)\nb-algebra of a saturated Fell bundle over~\(H\) with
  fibres \(\Cst(L_h,\Banb)\) at \(h\in H^1\) and unit
  fibre~\(\Cst(G,\Banb)\).  In particular, \(\Cst(L)\) is isomorphic
  to the section \cstar{}algebra of a saturated Fell bundle over~\(H\)
  with fibres~\(\Cst(L_h)\) at \(h\in H^1\) and unit
  fibre~\(\Cst(G)\).
\end{theorem}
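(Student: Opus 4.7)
The plan is to promote the algebraic \Star{}isomorphism \(\Sect(L,\Banb) \congto \Sect(H,\FellH)\), already constructed immediately before the theorem statement, to a \(\Cst\)\nb-isomorphism of the maximal completions. A first routine observation is that the map \(\xi \mapsto \tilde\xi\) is bornological in both directions: the bornology of quasi-continuous sections on~\(L\) is generated by compactly supported continuous sections on Hausdorff open subsets of~\(L^1\), and the same holds for~\(H\), with the bundle topology on \(\bar\FellH\) defined precisely so that quasi-continuous sections over \((F^1)^{-1}(U)\subseteq L^1\) correspond to quasi-continuous sections over \(U\subseteq H^1\). Hence bounded \(\Cst\)\nb-seminorms on one side correspond to bounded \(\Cst\)\nb-seminorms on the other; the theorem will follow once the maximal ones are shown to agree.

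The next step is to use the Disintegration Theorem, assumed for~\(\Banb\), to reduce bounded representations of \(\Sect(L,\Banb)\) to Fell-bundle representations of \(\Banb\) itself, and the analogous Disintegration Theorem for~\(\bar\FellH\) to reduce bounded representations of \(\Sect(H,\bar\FellH)\) to Fell-bundle representations of~\(\bar\FellH\). The main work is then to establish a bijective correspondence between these two classes of representations. In one direction, starting from a representation of~\(\Banb\) over~\(L\), restriction to the fibre gives a representation of~\(\Banb|_G\); combined with the slicing \(L^1 = \bigsqcup_{h\in H^1} L_h\) and the fact from Lemma~\ref{lem:Lh_locally_closed} that each~\(L_h\) is a groupoid equivalence between \(G_{\rg(h)}\) and~\(G_{\s(h)}\), one obtains, via the assumed Equivalence Theorem, unitary intertwiners that constitute the required representation of each~\(\bar\FellH_h\). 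The cocycle and involution conditions for a representation of~\(\bar\FellH\) translate directly from multiplicativity and inversion in~\(L\), and from the \(L\)\nb-invariance of the family~\(\dot\lambda\) used to define~\(\bar\FellH\).

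The inverse construction takes a representation of~\(\bar\FellH\), disintegrates its restriction to the unit fibre \(\Cst(G,\Banb)\) into a representation of \(\Banb|_G\) over~\(G\), and uses the bimodule actions of \(\bar\FellH_h = \Cst(L_h,\Banb)\) to extend this to a representation of all of~\(\Banb\) on the same Hilbert bundle over \(G^0 = L^0\). A direct computation, substituting the iterated integral formula~\eqref{eq:Haar_L} for the Haar system on~\(L\) into the disintegrated representation, shows that \(\xi \in \Sect(L,\Banb)\) and its image \(\tilde\xi \in \Sect(H,\FellH)\) act as the same operator under corresponding representations; hence the two maximal bounded \(\Cst\)\nb-seminorms coincide, giving the desired isomorphism, and specialising to the trivial line bundle yields the statement for \(\Cst(L)\).

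The main technical obstacle is the bookkeeping in the non-Hausdorff, locally compact setting: the Disintegration Theorem and the Equivalence Theorem are both assumed as inputs, but one must still verify that the Hilbert bundles, unitary intertwiners, and bimodule actions assembled in the two directions are genuinely measurable and continuous in the appropriate sense over a locally Hausdorff base, and that the two passages are mutually inverse. The critical matching point is that the \(L\)\nb-invariance of~\(\dot\lambda\) appearing in the convolution~\eqref{eq:convolution_FellH} on~\(\FellH\) lines up exactly with the iterated integral~\eqref{eq:Haar_L} defining the Haar system on~\(L\); this is precisely the place where the full force of Theorem~\ref{the:inherit_Haar_measures} enters, and it is what guarantees that the algebraic \Star{}isomorphism \(\xi \mapsto \tilde\xi\) intertwines the two notions of representation.
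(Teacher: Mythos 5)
Your overall strategy --- extend the dense \Star{}isomorphism \(\Sect(L,\Banb)\cong\Sect(H,\FellH)\) by disintegrating a bounded representation of \(\Sect(L,\Banb)\) over \(L\) and reassembling it into a representation of the completed bundle \(\bar\FellH\) over \(H\) --- is the same as the paper's, but your proposal skips the step where essentially all of the work lies.  The difficulty, which the paper flags explicitly, is the \(\Cst\)\nb-completion of the fibres \(\FellH_h\) ``in the direction of~\(G\)'': given the disintegrated data \((\alpha,(\Hils_x),\Pi)\), you must show that the operators obtained by integrating a section over~\(L_h\) are bounded by the norm of \(\Cst(L_h,\Banb)\).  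Your appeal to the Equivalence Theorem does not do this: that theorem is what permits completing \(\FellH_h\) to an imprimitivity bimodule in the first place, but it says nothing about why the concrete integrated operators on the given Hilbert space \(\LL^2(L^0,(\Hils_x),\alpha)\) respect the completed norms.  Moreover, your phrase ``restriction to the fibre gives a representation of \(\Banb|_G\)'' hides a nontrivial claim: integrating such a restriction requires a \(G\)\nb-quasi-invariant measure, and \(G^1\) may be an \(\alpha\circ\nu\)-nullset in \(L^1\), so neither the quasi-invariance of~\(\alpha\) for~\(G\) nor the compatibility of the modular functions is automatic.  The paper establishes exactly these points: Lemmas \ref{lem:left_action_G_L} and~\ref{lem:G_to_L_no_Fell} construct a morphism \(\Cst(G,\Banb)\to\Mult(\Cst(L,\Banb))\), Theorem~\ref{the:quasi-invariant} proves \(G\)\nb-quasi-invariance and the modular identity~\eqref{eq:modular_functions_equal}, and Lemma~\ref{lem:norm_estimate} then obtains the required bound by factorising \(\pi(\psi)=T_{\psi_1}^*\tilde{T}_{\psi_2}\) and computing \(\tilde{T}_{\psi_2}^*\tilde{T}_{\psi_2}=\pi_G(\varphi)\), where \(\varphi\) is essentially the \(\Cst\)\nb-valued inner product of \(\psi|_{L_h}\) averaged over~\(H\).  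Without some version of these arguments, your ``bijective correspondence of representations'' is asserted rather than proved.

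Two smaller points.  The opening claim that bounded \(\Cst\)\nb-seminorms ``correspond'' on the two sides because \(\xi\mapsto\tilde\xi\) is bornological covers only the easy direction (restriction from \(\Sect(H,\bar{\FellH})\) to its dense subalgebra \(\Sect(L,\Banb)\)); a bounded seminorm on \(\Sect(L,\Banb)\) need not a priori be continuous for the fibrewise \(\Cst\)\nb-norms, which is precisely the issue above.  And you invoke a Disintegration Theorem for \(\bar{\FellH}\) over~\(H\), which is neither among the paper's standing assumptions nor needed: once the estimate of Lemma~\ref{lem:norm_estimate} is available, the representation extends by \(I\)\nb-norm continuity to \(\Sect(H,\bar{\FellH})\) and hence to \(\Cst(H,\bar{\FellH})\), with no disintegration over~\(H\) required.
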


Before we prove this theorem, we interpret it.  We view a fibration
\(F\colon L\to H\) with fibre~\(G\) as a continuous action by groupoid
equivalences of~\(H\) on~\(G\) with transformation groupoid~\(L\),
see~§\ref{sec:equivalences_from_fibration}.  As in
\cites{Buss-Meyer:Crossed_products, Buss-Meyer:Actions_groupoids,
  Buss-Meyer-Zhu:Higher_twisted}, we view a saturated Fell
bundle~\(\Banb\) over~\(L\) as an action (by \cstar{}algebra
equivalences) of~\(L\) on the \cstar{}algebra \(A\defeq
\Cst(L^0,\Banb)\) of the restriction of~\(\Banb\) to~\(L^0\), which we
called \emph{unit fibre} above.  We view the section \cstar{}algebra
\(\Cst(L,\Banb)\) as the ``crossed product'' \(A\rtimes L\) of this
action.  We may restrict the action (that is, the Fell bundle)
from~\(L\) to \(G\subseteq L\).  Theorem~\ref{the:sections_full} says
that there is a new action (in the form of a Fell bundle) of~\(H\) on
\(A\rtimes G=\Cst(G,\Banb)\) such that
\[
(A\rtimes G)\rtimes H\cong A\rtimes L.
\]

We begin to prove Theorem~\ref{the:sections_full}.  The proof will be
finished after Lemma~\ref{lem:norm_estimate}.

The \(\Cst\)\nb-algebra~\(\Cst(L,\Banb)\) is the completion
of~\(\Sect(L,\Banb)\) for the maximal bounded \(\Cst\)\nb-seminorm
on~\(\Sect(L,\Banb)\).  Similarly, the section \(\Cst\)\nb-algebra of
the Fell bundle~\(\bar{\FellH}\) over~\(H\) is the completion of
\(\Sect(H,\bar{\FellH})\) in the maximal bounded \(\Cst\)\nb-seminorm.
By construction, \(\Sect(L,\Banb)\) is a dense \Star{}subalgebra in
\(\Sect(H,\bar{\FellH})\), and the inclusion map is bounded.  It
remains to prove that this dense inclusion extends to an isomorphism
between the \(\Cst\)\nb-completions.  Equivalently, any bounded
\Star{}representation of~\(\Sect(L,\Banb)\) on a Hilbert space extends
to a bounded \Star{}representation of \(\Sect(H,\bar{\FellH})\).  This
result looks plausible, but the proof is rather technical, and it took
us some time to finish this argument.

We prove that any bounded Hilbert space representation
of~\(\Sect(L,\Banb)\) is bounded in norm by a variant of the
\(I\)\nb-norm on \(\Sect(H,\bar{\FellH})\).  This is
non-trivial because of the \(\Cst\)\nb-completion~\(\bar{\FellH}\)
of~\(\FellH\) in the direction
of~\(G\).  A first step is to construct a morphism
\(\Cst(G,\Banb)\to\Cst(L,\Banb)\), that is, a nondegenerate
\Star{}homomorphism from~\(\Cst(G,\Banb)\) to the multiplier
\(\Cst\)\nb-algebra of~\(\Cst(L,\Banb)\).

\begin{lemma}
  \label{lem:left_action_G_L}
  Define
  \[
  (\varphi*\psi)(l) \defeq
  \int_G \varphi(g)\psi(g^{-1} l)\,\dd\lambda^{\rg(l)}(g)
  \]
  for \(\varphi\in\Sect(G,\Banb)\), \(\psi\in\Sect(L,\Banb)\), \(l\in
  L\), where~\(\lambda\) denotes the Haar system on~\(G\).  This
  defines a \Star{}homomorphism from \(\Sect(G,\Banb)\) into the
  multiplier \Star{}algebra of \(\Sect(L,\Banb)\), that is, the
  convolution is bilinear and satisfies \((\varphi_1 *\varphi_2) *\psi
  = \varphi_1* (\varphi_2 *\psi)\), \(\varphi * (\psi_1 *\psi_2) =
  (\varphi * \psi_1) *\psi_2\), and \(\psi_1^* * (\varphi *\psi_2) =
  (\varphi^**\psi_1)^* *\psi_2\) for
  \(\varphi,\varphi_1,\varphi_2\in\Sect(G,\Banb)\) and
  \(\psi,\psi_1,\psi_2\in\Sect(L,\Banb)\).
\end{lemma}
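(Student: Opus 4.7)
The plan is to check one property at a time, verifying first that the convolution formula produces a valid element of $\Sect(L,\Banb)$ and then establishing the four algebraic identities.

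First, I would check well-definedness. For $l\in L^1$ and $g\in G^{\rg(l)}$, the element $g^{-1}l\in L^1$ is defined (since $\s(g^{-1})=\rg(g)=\rg(l)$), and $\varphi(g)\cdot \psi(g^{-1}l)$ lies in $\Banb_g\cdot \Banb_{g^{-1}l}\subseteq \Banb_l$. So the integrand is a section of $\Banb_l$. Quasi-continuity of $(\varphi*\psi)$ follows by the same argument as for convolution in $\Sect(L,\Banb)$: reduce to $\varphi,\psi$ supported in Hausdorff open sets via partitions of unity and invoke continuity of the Haar system $\lambda$ (compare \cite{Muhly-Williams:Renaults_equivalence}*{Proposition 4.4}). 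Bilinearity and boundedness are immediate from linearity of integration and the boundedness of $\lambda$ on bounded sets.

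For identity (2), $(\varphi_1*\varphi_2)*\psi=\varphi_1*(\varphi_2*\psi)$, I would expand both sides, apply Fubini on the double integral over $G$, and use the left-invariance of the Haar system $\lambda$ on $G$ to reindex $g_2\mapsto g_1^{-1}g_2$ (exactly as in the standard proof of associativity of groupoid convolution). For identity (3), $\varphi*(\psi_1*\psi_2)=(\varphi*\psi_1)*\psi_2$, the argument is similar but uses the $L$-invariance of $\nu$ established in Theorem~\ref{the:inherit_Haar_measures}, applied to left translation by $g\in G\subseteq L$: after substituting $l_1\mapsto g^{-1}l_1'$ in the inner integral coming from $\psi_1*\psi_2$, one interchanges the integrations over $g$ and $l_1'$ and recognises the inner integral as $(\varphi*\psi_1)(l_1')$.

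The main obstacle is identity (4), $\psi_1^**(\varphi*\psi_2)=(\varphi^**\psi_1)^**\psi_2$, because $\psi^*(l)=\conj{\psi(l^{-1})}$ and a naïve change of variables $g\mapsto g^{-1}$ in $\lambda^x(g)$ would require a modular function. My strategy is to avoid any such substitution and instead use the principal $G$-bundle structure. Expanding $(\varphi^**\psi_1)^*(l_1)$ gives, after taking involutions inside the integral, $\int_G \psi_1^*(l_1 g)\varphi(g^{-1})\dd\lambda^{\s(l_1)}(g)$. Now rewrite the integral over $G^{\s(l_1)}$ as an integral over $L$ with respect to $\dot\lambda^{(F^1(l_1),\rg(l_1))}$ via the homeomorphism $G^{\s(l_1)}\congto (F^1,\rg)^{-1}(F^1(l_1),\rg(l_1))$, $g\mapsto l_1g$, from the proof of Theorem~\ref{the:inherit_Haar_measures}: with $l_1'\defeq l_1 g$, the above becomes $\int_L \psi_1^*(l_1')\varphi(l_1'^{-1}l_1)\dd\dot\lambda^{(F^1(l_1),\rg(l_1))}(l_1')$. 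Substituting into $(\varphi^**\psi_1)^**\psi_2(l)$ and decomposing $\dd\nu^{\rg(l)}(l_1)=\int \dd\dot\lambda^{(h,\rg(l))}(l_1)\dd\mu^{F^0(\rg(l))}(h)$ yields a triple integral over $H\times L\times L$ in which both $l_1$ and $l_1'$ sit in the same fibre $(F^1,\rg)^{-1}(h,\rg(l))$. Fubini lets me swap $l_1$ and $l_1'$; then converting the inner integral back from $\dot\lambda$ to $\lambda$ via the inverse identification gives $\psi_1^*(l_1')(\varphi*\psi_2)(l_1'^{-1}l)$, and reassembling $\nu$ from $\dot\lambda$ and $\mu$ reproduces $\psi_1^**(\varphi*\psi_2)(l)$.

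Throughout, all applications of Fubini are justified by reducing to compactly supported continuous sections on Hausdorff open subsets, on which the relevant measures are Radon and the integrands compactly supported; no technical subtleties beyond those already handled for the convolutions in $\Sect(L,\Banb)$ and $\Sect(H,\FellH)$ arise.
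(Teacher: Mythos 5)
Your proposal is correct and takes essentially the same route as the paper's proof: bilinearity and the two associativity identities from left invariance of the Haar systems, and for the key adjoint identity the decomposition of \(\nu\) into \(\mu\) and the \(L\)\nb-invariant fibrewise family \(\dot\lambda\), so that no modular function is needed. Your re-parametrisation \(l_1'=l_1g\) of the inner \(G\)\nb-integral followed by the symmetric swap of the two \(\dot\lambda\)\nb-variables is exactly the paper's combined substitution \((l_2,g)\mapsto(l_2g,g^{-1})\), which it presents as a chain of three measure-tracking homeomorphisms for fixed \(h\).
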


\begin{proof}
  Bilinearity is trivial, and \((\varphi_1 *\varphi_2) *\psi =
  \varphi_1* (\varphi_2 *\psi)\) and \(\varphi * (\psi_1 *\psi_2) =
  (\varphi * \psi_1) *\psi_2\) follow from the left invariance of the
  Haar systems on \(G\) and~\(L\).  We prove \(\psi_1^* * (\varphi
  *\psi_2) = (\varphi^**\psi_1)^* *\psi_2\) in detail.  Let \(l\in
  L\).  Let~\(\dot\lambda\) and the Haar systems \(\lambda\), \(\mu\)
  and~\(\nu\) on \(G\), \(H\) and~\(L\) be as in
  Theorem~\ref{the:inherit_Haar_measures}.  By definition,
  \begin{multline*}
    \psi_1^* * (\varphi *\psi_2) (l) =
    \iint
    \psi_1^*(l_2) \varphi(g) \psi_2(g^{-1} l_2^{-1} l)
    \,\dd\lambda^{\s(l_2)}(g) \,\dd\nu^{\rg(l)}(l_2)\\
    = \iiint
    \psi_1(l_2^{-1})^* \varphi(g) \psi_2(g^{-1} l_2^{-1} l)
    \,\dd\lambda^{\s(l_2)}(g) \,\dd\dot\lambda^{(h,\rg(l))}(l_2)
    \,\dd\mu^{F^0(\rg(l))}(h),
  \end{multline*}
  \begin{multline*}
    (\varphi^* * \psi_1)^* * \psi_2 (l) =
    \iint
    \psi_1(\tilde{g}^{-1} \tilde{l}_2^{-1})^* \varphi(\tilde{g}^{-1})
    \psi_2(\tilde{l}_2^{-1} l)
    \,\dd\lambda^{\s(\tilde{l}_2)}(\tilde{g})
    \,\dd\nu^{\rg(l)}(\tilde{l}_2)\\
    = \iiint
    \psi_1((\tilde{l}_2\tilde{g})^{-1})^* \varphi(\tilde{g}^{-1})
    \psi_2(\tilde{l}_2^{-1} l)
    \,\dd\lambda^{\s(\tilde{l}_2)}(\tilde{g})
    \,\dd\dot\lambda^{(h,\rg(l))}(\tilde{l}_2)
    \,\dd\mu^{F^0(\rg(l))}(h).
  \end{multline*}
  The equality of these two triple integrals is proved by three
  substitutions for fixed~\(h\).  First, we use the homeomorphism
  \(L_h^{\rg(l)}\times_{\s,L^0,\rg} G \congto L_h^{\rg(l)} \times
  L_h^{\rg(l)}\), \((l_2,g)\mapsto (l_2,l_2 g)\).  Secondly, we use
  the homeomorphism \(L_h^{\rg(l)} \times L_h^{\rg(l)} \congto G
  \times_{\rg,L^0,\s} L_h^{\rg(l)}\), \((l_2,l_3)\mapsto (l_3^{-1}
  l_2,l_3)\), which maps \((l_2,l_2 g)\mapsto (g^{-1},l_2 g)\).
  Finally, we use the coordinate flip \(G \times_{\rg,L^0,\s}
  L_h^{\rg(l)} \congto L_h^{\rg(l)} \times_{\s,L^0,\rg} G\),
  \((\tilde{g},\tilde{l}_2) \mapsto(\tilde{l}_2,\tilde{g})\).
  Altogether, these substitutions take \((l_2,g)\mapsto
  (\tilde{l}_2,\tilde{g})\) with \(\tilde{l}_2 \defeq l_2 g\) and
  \(\tilde{g} \defeq g^{-1}\).  Since the family of
  measures~\(\dot\lambda\) is left \(L\)\nb-invariant and
  \(\dot\lambda^{1,x} = \lambda^x\), the measure
  \(\lambda^{\s(l_2)}\times \dot\lambda^{(h,\rg(l))}\) on
  \(L_h^{\rg(l)}\times_{\s,L^0,\rg} G\) is transformed by these
  substitutions first to \(\dot\lambda^{(h,\rg(l))}\times
  \dot\lambda^{(h,\rg(l))}\), secondly to \(\lambda^{\s(l_2))}\times
  \dot\lambda^{(h,\rg(l))}\) on \(G \times_{\rg,L^0,\s}
  L_h^{\rg(l)}\), and finally to \(\dot\lambda^{(h,\rg(l))} \times
  \lambda^{\s(l_2)}\) on \(L_h^{\rg(l)} \times_{\s,L^0,\rg} G\).
  Hence the substitutions above identify the two triple integrals
  expressing \(\psi_1^* * (\varphi *\psi_2) (l)\) and \((\varphi^* *
  \psi_1)^* * \psi_2 (l)\).
\end{proof}

To construct a morphism \(\Cst(G,\Banb)\to\Mult(\Cst(L,\Banb))\) from
the above lemma, we must extend multipliers defined only
on~\(\Sect(L,\Banb)\) to~\(\Cst(L,\Banb)\).  This can be done assuming
a stronger form of the Disintegration Theorem for densely defined
representations.  To reduce the number of technical assumptions, we
construct this morphism only for the special case of a trivial Fell
bundle, where the relevant technical result is already proved.  This
special case provides some information on quasi-invariant measures and
modular functions which, together with the Disintegration Theorem for
representations by globally defined bounded operators, gives the
morphism \(\Cst(G,\Banb)\to\Mult(\Cst(L,\Banb))\) also for Fell bundle
coefficients.

\begin{lemma}
  \label{lem:G_to_L_no_Fell}
  There is a unique non-degenerate \Star{}homomorphism \(\iota\colon
  \Cst(G)\to\Mult(\Cst(L))\) such that \(\iota(\phi)\psi = \phi*\psi\)
  for \(\phi\in\Sect(G)\), \(\psi\in\Sect(L)\).
\end{lemma}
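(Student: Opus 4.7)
The plan is to extend the algebraic multiplier action from Lemma~\ref{lem:left_action_G_L} by showing that for each $\phi\in\Sect(G)$, the operator $\psi\mapsto \phi*\psi$ on $\Sect(L)$ is bounded in the $\Cst(L)$-norm, with operator norm at most $\norm{\phi}_{\Cst(G)}$. The key tool is the Disintegration Theorem for plain groupoid representations of~$L$, which is available in this trivial Fell bundle setting by \cite{Renault:Representations}.

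Concretely, let $\pi\colon \Cst(L)\to \Bound(\Hils)$ be any non-degenerate representation. The Disintegration Theorem realises $\pi$ as the integrated form of a unitary representation $U$ of the groupoid $L$ on a measurable Hilbert bundle over~$L^0$ with respect to a quasi-invariant measure. Restricting $U$ to the subgroupoid $G\subseteq L$ and integrating against the Haar system~$\lambda$ of~$G$ yields a non-degenerate representation $\pi_G\colon \Cst(G)\to \Bound(\Hils)$ on the same Hilbert space; the ambient quasi-invariant measure on $L^0=G^0$ is automatically quasi-invariant for $(G,\lambda)$ because $\nu$ is built from $\lambda$ and $\mu$ in~\eqref{eq:Haar_L}. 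A Fubini-type computation using formula~\eqref{eq:Haar_L} for~$\nu$ then shows that
\begin{equation*}
  \pi_G(\phi)\pi(\psi) = \pi(\phi*\psi)
  \qquad\text{for all}\ \phi\in\Sect(G),\ \psi\in\Sect(L);
\end{equation*}
indeed, the integral defining $\phi*\psi$ against $\nu$ decomposes into the inner $\lambda$-integral giving $\pi_G(\phi)$ followed by the $\nu$-integral giving $\pi(\psi)$. This yields the bound $\norm{\phi*\psi}_{\Cst(L)}\le \norm{\phi}_{\Cst(G)}\cdot\norm{\psi}_{\Cst(L)}$, so $\iota$ extends by continuity to a bounded \Star{}homomorphism $\iota\colon \Cst(G)\to \Mult(\Cst(L))$; the multiplier relations transfer from Lemma~\ref{lem:left_action_G_L} by density.

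For non-degeneracy, I use an approximate unit $(\phi_i)$ in $\Sect(G)$ consisting of positive sections supported on shrinking Hausdorff open neighbourhoods of the unit space $G^0\subseteq G^1$ and normalised so that $\int_G \phi_i\dd\lambda^x \to 1$ uniformly on compact subsets of~$G^0$. For $\psi\in\Sect(L)$ supported on a Hausdorff open subset, continuity of $\lambda$ gives $\phi_i*\psi\to\psi$ in the inductive limit topology on $\Sect(L)$, hence in the $\Cst(L)$-norm. Thus $\iota(\Cst(G))\cdot\Cst(L)$ is dense in $\Cst(L)$. Uniqueness is immediate from the density of $\Sect(G)\subseteq\Cst(G)$ and $\Sect(L)\subseteq\Cst(L)$ together with the prescription of $\iota$ on $\Sect(G)$.

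The main obstacle is verifying the compatibility identity $\pi_G(\phi)\pi(\psi)=\pi(\phi*\psi)$ rigorously from the disintegration. One must confirm that restricting~$U$ to~$G$ and integrating against~$\lambda$ produces a genuine non-degenerate bounded representation of $\Cst(G)$, and that the interchange of $\lambda$- and $\mu$-integrals implicit in the identification is legitimate for quasi-continuous sections that need not be supported on Hausdorff subsets of~$L^1$. These are bookkeeping issues around the measurable structure inherited from~\eqref{eq:Haar_L}, but once they are handled the identity is forced by the definition of $\iota(\phi)\psi$ in Lemma~\ref{lem:left_action_G_L}.
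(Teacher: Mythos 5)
Your overall strategy (disintegrate a representation of \(\Cst(L)\), restrict the groupoid representation to \(G\), integrate against \(\lambda\), and deduce the norm bound) is the strategy the paper uses \emph{later}, for Fell-bundle coefficients, \emph{after} Theorem~\ref{the:quasi-invariant} is available. Used at the stage of Lemma~\ref{lem:G_to_L_no_Fell} it has a genuine gap: your two central claims -- that the quasi-invariant measure \(\alpha\) for \(L\) is ``automatically'' quasi-invariant for \((G,\lambda)\), and that \(\pi_G(\phi)\pi(\psi)=\pi(\phi*\psi)\) follows from a Fubini-type computation with~\eqref{eq:Haar_L} -- are exactly the content of Theorem~\ref{the:quasi-invariant}, which the paper \emph{deduces from} this lemma, not the other way around. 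Neither claim is bookkeeping. Quasi-invariance for \(G\) cannot be obtained by restricting Radon--Nikodym derivatives, because \(G^1=(F^1)^{-1}(H^0)\) is typically an \(\alpha\circ\nu\)-null subset of \(L^1\) (already when \(H\) is a non-\'etale group), so the equivalence \(\alpha\circ\nu\sim\alpha\circ\tilde\nu\) says nothing about \(\alpha\circ\lambda\) versus \(\alpha\circ\tilde\lambda\). And the integrated forms of the disintegrated representations carry the factors \(\Delta^{1/2}\) and \(\Delta_G^{1/2}\) as in~\eqref{eq:integrate_rep_Fell}; your ``Fubini'' identity requires the cocycle relation \(\Delta_G(g)\Delta(l)=\Delta(gl)\) almost everywhere, i.e.\ precisely~\eqref{eq:modular_functions_equal}, which is not known at this point and does not follow from the construction of \(\nu\) alone. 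As written, the argument is circular relative to the paper's logical order, or at best leaves the hardest step unproved.

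The paper's proof avoids this by never disintegrating \(\pi\) at this stage: it takes a faithful representation \(\pi\) of the separable algebra \(\Cst(L)\), defines \(\pi_G(\phi)\) algebraically on the dense subspace \(\pi(\Sect(L))\Hils\) by \(\pi_G(\phi)\bigl(\sum\pi(\psi_i)\xi_i\bigr)=\sum\pi(\phi*\psi_i)\xi_i\), checks well-definedness using the adjointability identity \(\psi_1^**(\phi*\psi_2)=(\phi^**\psi_1)^**\psi_2\) from Lemma~\ref{lem:left_action_G_L}, and then invokes Renault's result on densely defined representations (\cite{Renault:Representations}*{Proposition~4.2}) to conclude that these operators are bounded and give a \Star{}homomorphism \(\Cst(G)\to\Bound(\Hils)\) landing in \(\Mult(\Cst(L))\). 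No modular functions enter. If you want to salvage your route, you would first have to prove quasi-invariance of \(\alpha\) for \(G\) and the identity~\eqref{eq:modular_functions_equal} by some independent argument; the paper's point is that the cheapest way to get these facts is to prove the lemma first by the purely \(\Cst\)\nb-algebraic argument above. Your norm estimate, uniqueness, and approximate-unit argument for non-degeneracy are fine once the compatibility identity is actually established.
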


\begin{proof}
  Since~\(\Cst(L)\) is separable, it has a faithful representation
  \(\pi\colon \Cst(L)\to\Bound(\Hils)\) on a separable Hilbert
  space~\(\Hils\).  The extension of~\(\pi\) to~\(\Mult(\Cst(L))\)
  remains faithful.  We want to define a densely defined
  representation~\(\pi_G\) of~\(\Sect(G)\) on~\(\Hils\) by
  \(\pi_G(\phi) \xi = \sum_{i=1}^n \pi(\phi*\psi_i)\xi_i\) if
  \(\phi\in\Sect(G)\) and \(\xi = \sum_{i=1}^n \pi(\psi_i)\xi_i\) with
  \(\psi_i\in\Sect(L)\), \(\xi_i\in\Hils\) for \(i=1,\dotsc,n\).  This
  is well defined, that is, \(\pi_G(\phi)\xi\) does not depend on how
  we decompose~\(\xi\) because
  \[
  \biggl< \sum_{j=1}^m \pi(\tau_j)\eta_j \biggm|
  \sum_{i=1}^n \pi(\phi*\psi_i)\xi_i \biggr>
  = \biggl< \sum_{j=1}^m \pi(\phi^**\tau_i)\eta_i \biggm|
  \sum_{i=1}^n \pi(\psi_i)\xi_i \biggr>
  \]
  for any \(\tau_j\in\Sect(L)\), \(\eta_j\in\Hils\).  The right hand
  side no longer depends on the decomposition of~\(\xi\) and
  determines \(\sum_{i=1}^n \pi(\phi*\psi_i)\xi_i\) because vectors of
  the form \(\sum_{j=1}^m \pi(\tau_j)\eta_j\) are dense in~\(\Hils\).

  The densely defined representation~\(\pi_G\) of~\(\Sect(G)\)
  on~\(\Hils\) satisfies the assumptions in
  \cite{Renault:Representations}*{Proposition 4.2}.  Hence the
  operators~\(\pi_G(\phi)\) extend to bounded operators on~\(\Hils\)
  that define a \Star{}homomorphism \(\pi_G\colon
  \Cst(G)\to\Bound(\Hils)\).  Since \(\pi_G(\phi)\pi(\psi) =
  \pi(\phi*\psi)\) for all \(\phi\in\Sect(G)\), \(\psi\in\Sect(L)\)
  and \(\pi_G(\Cst(G))\) is a \Star{}algebra, it is contained in
  \(\Mult(\Cst(L))\subseteq \Bound(\Hils)\).
\end{proof}

Let \(\pi\colon \Cst(L,\Banb)\to\Bound(\Hils)\) be a faithful
representation on a separable Hilbert space~\(\Hils\).  The
Disintegration Theorem gives an \(L\)\nb-quasi-invariant
measure~\(\alpha\) on~\(L^0\), an \(\alpha\)\nb-measurable field of
Hilbert spaces~\((\Hils_x)_{x\in L^0}\) over~\(L^0\), and a measurable
representation \(\Pi_l\colon \Banb_l \to
\Bound(\Hils_{\s(l)},\Hils_{\rg(l)})\), \(l\in L^1\), of the Fell
bundle, such that \(\Hils \cong \LL^2(L^0,(\Hils_x),\alpha)\) is the
space of \(\LL^2\)\nb-sections of the field~\((\Hils_x)\)
over~\(L^0\), and
\begin{equation}
  \label{eq:integrate_rep_Fell}
  \pi(\psi) \xi(x)
  = \int_{L^x} \Pi_l(\psi(l)) \xi(\s(l))
  \sqrt{\frac{\dd(\alpha\circ\tilde\nu)}{\dd(\alpha\circ\nu)}}(l)
  \,\dd\nu^x(l)
\end{equation}
for all \(\psi\in\Sect(L,\Banb)\), \(\xi\in\Hils\), and
\(\alpha\)\nb-almost all \(x\in L^0\).  Here \(\tilde\nu(l) =
\nu(l^{-1})\), and the quasi-invariance of~\(\alpha\) says that the
measures \(\alpha\circ\tilde\nu\) and~\(\alpha\circ\nu\) on~\(L^1\)
are equivalent, so that the Radon--Nikodym derivative
\(\frac{\dd(\alpha\circ\tilde\nu)}{\dd(\alpha\circ\nu)}\) is defined
\(\alpha\circ\nu\)-almost everywhere.  This theorem is proved in
\cites{Renault:Representations, Muhly-Williams:Renaults_equivalence,
  Muhly-Williams:Equivalence.FellBundles} for many cases, but no proof
for arbitrary Fell bundles over locally Hausdorff groupoids seems to
be published yet.  More precisely, there is an \(L\)\nb-invariant
\(\alpha\)\nb-nullset \(E\subseteq L^0\) such that the
representation~\(\Pi_l\) is defined for all \(l\in L^1\) with
\(\s(l),\rg(l)\notin E\), and has the properties required of a
representation for all such~\(l\); most notably, \(\Pi_{l_1 l_2}(b_1
b_2) = \Pi_{l_1}(b_1) \Pi_{l_2}(b_2)\) for composable \(l_1,l_2\in
L^1\) and \(b_i\in \Banb_{l_i}\) for \(i=1,2\) and \(\Pi_{l^{-1}}(b^*)
= \Pi_l(b)^*\) for \(l\in L^1\), \(b\in\Banb_l\) provided the source
and range objects of \(l,l_1,l_2\) do not belong to~\(E\).  It is
important that the nullsets where things do not work are of this
particular form.

\begin{theorem}
  \label{the:quasi-invariant}
  A quasi-invariant measure on~\(L\) is also quasi-invariant
  for~\(G\), and
  \begin{equation}
    \label{eq:modular_functions_equal}
    \frac{\dd(\alpha\circ\tilde\lambda)}{\dd(\alpha\circ\lambda)}(g)
    \frac{\dd(\alpha\circ\tilde\nu)}{\dd(\alpha\circ\nu)}(l)
    = \frac{\dd(\alpha\circ\tilde\nu)}{\dd(\alpha\circ\nu)}(g l)
  \end{equation}
  holds for almost all \((g,l)\in G^1\times_{\s,G^0,\rg} L^1\) with
  respect to the measure defined by \(\Sect(G^1\times_{\s,G^0,\rg}
  L^1) \ni f\mapsto \iint f(g,l) \,\dd\nu^{\s(g)}(l)
  \,\dd(\alpha\circ\lambda)(g)\).
\end{theorem}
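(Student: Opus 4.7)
The plan is to derive both assertions from the Disintegration Theorem applied to a compatible pair of representations of \(\Cst(L)\) and \(\Cst(G)\), using the morphism \(\iota\) of Lemma~\ref{lem:G_to_L_no_Fell}.  First I would realise \(\alpha\) as the quasi-invariant measure of some faithful separable representation \(\pi\colon \Cst(L)\to \Bound(\Hils)\), for instance by taking \(\pi\) to be an \(\LL^2\)-representation of \(\Cst(L)\) built from \(\alpha\) on \(L^0\).  The Disintegration Theorem then identifies \(\Hils\cong \LL^2(L^0,(\Hils_x),\alpha)\) and writes \(\pi\) via~\eqref{eq:integrate_rep_Fell} with modular function \(D_L=\dd(\alpha\circ\tilde\nu)/\dd(\alpha\circ\nu)\).

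Composing \(\pi\) with the non-degenerate morphism \(\iota\colon \Cst(G)\to \Mult(\Cst(L))\) gives a non-degenerate representation \(\pi_G\defeq \pi\circ\iota\) of \(\Cst(G)\) on the same Hilbert space~\(\Hils\).  Applying the Disintegration Theorem to \(\pi_G\) yields a \(G\)\nb-quasi-invariant measure \(\alpha_G\) on \(L^0=G^0\) together with a disintegration \(\Hils\cong \LL^2(L^0,(\Hils'_x),\alpha_G)\) and modular function \(\dd(\alpha_G\circ\tilde\lambda)/\dd(\alpha_G\circ\lambda)\).  Essential uniqueness of direct-integral decompositions of a separable Hilbert space over a standard Borel base forces \(\alpha_G\sim\alpha\); quasi-invariance is preserved under passage to an equivalent measure (the modular function acquires the coboundary \((f\circ\s)/(f\circ\rg)\) with \(f=\dd\alpha/\dd\alpha_G\)), so \(\alpha\) itself is \(G\)\nb-quasi-invariant, with a well defined modular function \(D_G=\dd(\alpha\circ\tilde\lambda)/\dd(\alpha\circ\lambda)\).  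After absorbing \(f\) into the field isomorphism, the disintegration of \(\pi_G\) takes the form \(\pi_G(\phi)\xi(x)=\int_G \Pi_g(\phi(g))\xi(\s(g))D_G(g)^{1/2}\,\dd\lambda^x(g)\).

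For the identity~\eqref{eq:modular_functions_equal}, I would expand both sides of \(\pi(\phi\ast\psi)=\pi_G(\phi)\pi(\psi)\) (from Lemma~\ref{lem:left_action_G_L}) applied to a vector \(\xi\in\Hils\), for \(\phi\in\Sect(G)\) and \(\psi\in\Sect(L)\).  The left-hand side, unfolded via the \(\nu\)-disintegration of \(\pi\) together with the substitution \(l=g l'\) and the left invariance of \(\nu\), becomes an iterated integral over \((g,l')\in G^1\times_{\s,L^0,\rg}L^1\) whose integrand carries the weight \(D_L(g l')^{1/2}\).  The right-hand side, unfolded via the \(\lambda\)-disintegration of \(\pi_G\) followed by the \(\nu\)-disintegration of \(\pi\), produces the same iterated integral with weight \(D_G(g)^{1/2}D_L(l')^{1/2}\).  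Letting \(\phi\), \(\psi\) and \(\xi\) range over dense subsets forces the two weights to agree almost everywhere with respect to the measure \(\lambda\otimes\nu\circ\alpha\) indicated in the statement, which is~\eqref{eq:modular_functions_equal}.

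The main obstacle is the uniqueness step in the middle paragraph: two applications of the Disintegration Theorem to the same separable representation a priori yield only equivalent measure classes and measurably isomorphic fields on the unit space, and one must carefully absorb the Radon--Nikodym derivative \(f=\dd\alpha/\dd\alpha_G\) into the Hilbert field identification on an invariant conull set so that the resulting formula for \(\pi_G\) is expressed in terms of \(\alpha\) and \(D_G\).  Over the locally (rather than globally) Hausdorff base \(L^0\) this bookkeeping is standard but delicate.
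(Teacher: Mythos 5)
Your proposal is correct and follows essentially the same route as the paper: the paper also takes the regular representation associated to \(\alpha\), composes with the morphism \(\Cst(G)\to\Mult(\Cst(L))\) from Lemma~\ref{lem:G_to_L_no_Fell}, identifies the disintegrations by noting that both representations restrict to the same representation of \(\Cont_0(L^0)\), and obtains~\eqref{eq:modular_functions_equal} by comparing the two expansions of \(\pi_G(\varphi)\pi(\psi)\xi = \pi(\varphi*\psi)\xi\) after the substitution \(l = g l'\). Your extra care with the uniqueness of the direct-integral decomposition (and absorbing \(\dd\alpha/\dd\alpha_G\)) is exactly the bookkeeping the paper compresses into the statement that the \(\Cont_0(L^0)\)-representations coincide; note also that faithfulness of \(\pi\) is not needed, and the key relation \(\pi_G(\varphi)\pi(\psi)=\pi(\varphi*\psi)\) comes from Lemma~\ref{lem:G_to_L_no_Fell} rather than Lemma~\ref{lem:left_action_G_L}.
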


Both Radon--Nikodym derivatives in~\eqref{eq:modular_functions_equal}
are characters almost everywhere.  If they are continuous,
then~\eqref{eq:modular_functions_equal} is equivalent to
\(\frac{\dd(\alpha\circ\tilde\lambda)}{\dd(\alpha\circ\lambda)}(g) =
\frac{\dd(\alpha\circ\tilde\nu)}{\dd(\alpha\circ\nu)}(g)\) for \(g\in
G^1\).  Without continuity, this may be meaningless because \(G^1\)
may be an \(\alpha\circ\nu\)-null set.

\begin{proof}
  Any quasi-invariant measure~\(\alpha\) on~\(L^0\) appears in some
  representation of the trivial Fell bundle~\(\C\) and thus comes from
  some representation of~\(\Cst(L)\): we may take the ``regular
  representation'' associated to~\(\alpha\).
  Lemma~\ref{lem:G_to_L_no_Fell} shows that this representation
  of~\(\Cst(L)\) induces a representation of~\(\Cst(G)\).  The
  representations of \(\Cont_0(G^0)=\Cont_0(L^0)\) associated to the
  representations of \(\Cst(G)\) and~\(\Cst(L)\) coincide.  Hence the
  Disintegration of the representation of~\(\Cst(G)\) gives the same
  measure (class)~\(\alpha\) and the same measurable field of Hilbert
  spaces over~\(G^0\) as for~\(\Cst(L)\).  Thus~\(\alpha\) is also
  quasi-invariant for~\(G\).  The representation of~\(\Sect(G)\) is
  given by an integral formula like~\eqref{eq:integrate_rep_Fell}, but
  with functions on~\(G\) and the modular function
  \(\frac{\dd(\alpha\circ\tilde\lambda)}{\dd(\alpha\circ\lambda)}\).
  Thus
  \[
  \pi_G(\varphi)\pi(\psi) \xi(x)
  = \iint \varphi(g)\psi(l)\xi(\s(l))
  \frac{\dd(\alpha\circ\tilde\lambda)}{\dd(\alpha\circ\lambda)}(g)
  \frac{\dd(\alpha\circ\tilde\nu)}{\dd(\alpha\circ\nu)}(l)
  \,\dd\nu^{\s(g)}(l) \,\dd\lambda^x(g).
  \]
  The construction of the morphism \(\Cst(G)\to\Mult(\Cst(L))\) in
  Lemma~\ref{lem:G_to_L_no_Fell} and one obvious substitution give
  \[
  \pi_G(\varphi)\pi(\psi) \xi(x)
  = \pi(\varphi* \psi) \xi(x)
  = \iint \varphi(g)\psi(l)\xi(\s(l))
  \frac{\dd(\alpha\circ\tilde\nu)}{\dd(\alpha\circ\nu)}(g l)
  \,\dd\nu^{\s(g)}(l) \,\dd\lambda^x(g).
  \]
  Both formulas coincide for \(\alpha\)\nb-almost all \(x\in L^0\) if
  and only if~\eqref{eq:modular_functions_equal} holds.
\end{proof}

The measure~\(\alpha\) on \(G^0=L^0\) is quasi-invariant for~\(G\) by
Theorem~\ref{the:quasi-invariant}.  The representation~\(\Pi_l\)
restricts to a representation of~\(\Banb|_G\).  This works even if
\(G\subseteq L\) is an \(\alpha\circ\nu\)-nullset because of the
special form of the set where~\(\Pi\) is not defined.  The
measure~\(\alpha\) on~\(G^0\), the measurable field of Hilbert
spaces~\((\Hils_x)_{x\in G^0}\), and~\(\Pi|_G\) form a representation
of~\(\Banb|_G\).  It integrates to a \Star{}representation~\(\pi_G\)
of~\(\Sect(G,\Banb|_G)\), given by
\[
\pi_G(\varphi)\xi(x) \defeq
\int_{G^x} \Pi_g(\varphi(g)) \xi(\s(g))
\sqrt{\frac{\dd(\alpha\circ\tilde\nu)}{\dd(\alpha\circ\nu)}}(g)
\,\dd\lambda^x(g)
\]
for all \(\varphi\in\Sect(G,\Banb)\), \(\xi\in\Hils\), and \(x\in
G^0\setminus E\) for the nullset~\(E\) above.

Let \(\varphi\in\Sect(G,\Banb)\), \(\psi\in\Sect(L,\Banb)\),
\(\xi\in\Hils\), and \(x\in G^0\setminus E\).  Then
\begin{align*}
  &\phantom{{}={}} \pi_G(\varphi)\pi(\psi)\xi(x)
  \\&= \int_{G^x} \int_{L^{\s(g)}}
  \Pi_g(\varphi(g)) \Pi_l(\psi(l)) \xi(\s(l))
  \sqrt{\frac{\dd(\alpha\circ\tilde\lambda)}{\dd(\alpha\circ\lambda)}}(g)
  \sqrt{\frac{\dd(\alpha\circ\tilde\nu)}{\dd(\alpha\circ\nu)}}(l)
  \,\dd\nu^{\s(g)}(l) \,\dd\lambda^x(g)
  \\&= \int_{L^x} \int_{G^x}
  \Pi_{l'}(\varphi(g) \psi(g^{-1} l')) \xi(\s(l'))
  \,\dd\lambda^x(g)
  \sqrt{\frac{\dd(\alpha\circ\tilde\nu)}{\dd(\alpha\circ\nu)}}(l')
  \,\dd\nu^x(l)
  = \pi(\phi*\psi)\xi(x).
\end{align*}
We compute as in the proof of Theorem~\ref{the:quasi-invariant}, but
now we know~\eqref{eq:modular_functions_equal} and deduce backwards
that \(\pi_G(\varphi)\pi(\psi) = \pi(\varphi*\psi)\).  Thus~\(\pi_G\)
gives a \Star{}homomorphism \(\Cst(G,\Banb)\to\Mult(\Cst(L,\Banb))\)
as in the proof of Lemma~\ref{lem:G_to_L_no_Fell}.  It is
nondegenerate because the convolution maps
\(\Sect(G,\Banb)\otimes\Sect(L,\Banb)\) to a dense subspace
of~\(\Sect(L,\Banb)\).

\begin{lemma}
  \label{lem:norm_estimate}
  Let \(\psi\in\Sect(L,\Banb)\).  Then
  \(\norm{\pi(\psi)}_{\Bound(\Hils)} \le \norm{\psi}_H^{1/2}
  \norm{\psi^*}_H^{1/2}\) with
  \[
  \norm{\psi}_H \defeq
  \sup_{y\in H^0} \int_{H^y} \norm{\psi|_{L_h}}_{\Cst(L_h)} \,\dd\mu^y(h).
  \]
\end{lemma}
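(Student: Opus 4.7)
The plan is to recognise the asserted bound as the standard Cauchy--Schwarz $I$\nobreakdash-norm estimate for an integrated representation of the Fell bundle $\bar{\FellH}$ over~$H$ already constructed above, applied to the section $\tilde\psi\in\Sect(H,\bar{\FellH})$ corresponding to~$\psi$ under the $*$\nobreakdash-isomorphism $\xi\mapsto\tilde\xi$. One first checks that $\norm{\psi}_H$ is precisely the range-side $I$\nobreakdash-norm $\sup_y\int_{H^y}\norm{\tilde\psi(h)}_{\Cst(L_h)}\,\dd\mu^y(h)$ of~$\tilde\psi$, while $\norm{\psi^*}_H$ is its source-side counterpart: the identity $\widetilde{\psi^*}=(\tilde\psi)^*$ together with the isometry of the Fell bundle involution $\Cst(L_h)\cong \Cst(L_{h^{-1}})$ and the substitution $h\mapsto h^{-1}$ identify $\norm{\psi^*}_H$ with $\sup_y\int_{H_y}\norm{\tilde\psi(h)}_{\Cst(L_h)}\,\dd\mu_y(h)$. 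Thus the lemma reduces to the claim that $\pi$, regarded as a representation of $\Sect(H,\bar{\FellH})$ via the $*$\nobreakdash-isomorphism, is an integrated Fell bundle representation, since the desired inequality is the textbook $I$\nobreakdash-norm estimate for such representations.

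To establish the latter, I would disintegrate~$\pi$ using the Disintegration Theorem assumed for~$\Banb$, obtaining the $L$\nobreakdash-quasi-invariant measure~$\alpha$ on~$L^0$, the measurable Hilbert field $(\Hils_x)_{x\in L^0}$ and the fibrewise representation~$\Pi$ of~$\Banb$ with~$\pi$ given by~\eqref{eq:integrate_rep_Fell}. Setting $\bar\alpha\defeq F^0_*\alpha$ and disintegrating $\alpha=\int\alpha_y\,\dd\bar\alpha(y)$ along~$F^0$, I put $\Hils_y\defeq \LL^2((F^0)^{-1}(y),(\Hils_x),\alpha_y)$, so that $\Hils\cong\int^\oplus\Hils_y\,\dd\bar\alpha(y)$; the Haar decomposition~\eqref{eq:Haar_L} then implies that $\bar\alpha$ is $H$\nobreakdash-quasi-invariant. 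For each $h\in H^1$, the inner integral
\begin{equation*}
T_h\xi(x)\defeq \int_{L_h^x}\Pi_l(\psi(l))\,\xi(\s(l))\,\sqrt{D(l)}\,\dd\dot\lambda^{(h,x)}(l),\qquad D\defeq \tfrac{\dd(\alpha\circ\tilde\nu)}{\dd(\alpha\circ\nu)},
\end{equation*}
produced by decomposing~\eqref{eq:integrate_rep_Fell} via Theorem~\ref{the:inherit_Haar_measures}, should define a bounded linear map $T_h\colon\Hils_{\s(h)}\to\Hils_{\rg(h)}$. The Equivalence Theorem for the groupoid equivalence~$L_h$ (Lemma~\ref{lem:Lh_locally_closed}), combined with the quasi-invariance identity~\eqref{eq:modular_functions_equal} of Theorem~\ref{the:quasi-invariant}, shows that $\sqrt{D(l)}$ implements the correct intertwining of the $\Cst(G_{\s(h)},\Banb)$- and $\Cst(G_{\rg(h)},\Banb)$-actions dictated by the bimodule $\Cst(L_h)$, and yields $\norm{T_h}\le\norm{\tilde\psi(h)}_{\Cst(L_h)}$. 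Fubini together with~\eqref{eq:Haar_L} finally identifies $\pi(\psi)$ with the integral of $(T_h)_{h\in H^1}$ against~$\mu$, exhibiting~$\pi$ as an integrated representation of~$\bar{\FellH}$. The standard $I$\nobreakdash-norm estimate for such representations then gives the claim.

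The principal obstacle is the bound $\norm{T_h}\le \norm{\tilde\psi(h)}_{\Cst(L_h)}$ together with the compatibility of the family $(T_h)$ with the bimodule structure. The two Hilbert spaces $\Hils_{\s(h)}$ and~$\Hils_{\rg(h)}$ are extracted from the single disintegration of~$\pi$ rather than constructed through an explicit Rieffel induction along~$L_h$, so one must verify directly that they carry the representations of $\Cst(G_{\s(h)},\Banb)$ and $\Cst(G_{\rg(h)},\Banb)$ required by the equivalence, with the factor $\sqrt{D(l)}$ serving as the requisite cocycle; the identity~\eqref{eq:modular_functions_equal} is precisely what secures this compatibility. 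Once this matching is confirmed, the Equivalence Theorem applies and the $I$\nobreakdash-norm argument completes the proof.
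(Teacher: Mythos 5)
Your overall strategy -- disintegrate $\pi$ over $L$, reassemble it fibrewise over $H$, and then invoke the Cauchy--Schwarz $I$\nobreakdash-norm estimate -- differs from the paper, which never attempts a disintegration over $H$: instead it splits $\psi=\psi_1\cdot\psi_2$ with $\psi_1(h,x)=\norm{\psi|_{L_h}}^{1/2}_{\Cst(L_h)}$, factors $\pi(\psi)=T_{\psi_1}^*\tilde T_{\psi_2}$ through the single auxiliary Hilbert space $\LL^2(H^1\times_{\rg,H^0,F^0}L^0,\pr_2^*(\Hils_x),\alpha\circ\mu)$, and computes $\tilde T_{\psi_2}^*\tilde T_{\psi_2}=\pi_G(\varphi)$ for one global element $\varphi\in\Sect(G,\Banb)$, whose norm is evaluated via the $\Cst(L_h)$-valued inner products; this uses only the already constructed morphism $\Cst(G,\Banb)\to\Mult(\Cst(L,\Banb))$ and the identity~\eqref{eq:modular_functions_equal}, and deliberately avoids any fibrewise (over $H^0$ or over $h\in H^1$) measure-theoretic statements.

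The gap in your proposal is precisely the step you label the principal obstacle, and as stated it is not merely unproved but false. You place the full weight $\sqrt{D(l)}$, $D=\frac{\dd(\alpha\circ\tilde\nu)}{\dd(\alpha\circ\nu)}$, inside the operator $T_h$ and then claim $\norm{T_h}\le\norm{\tilde\psi(h)}_{\Cst(L_h)}$. Already in the simplest case where $G$ is trivial and $L=H$ is a non-unimodular locally compact group one has $D=\Delta_H^{-1}$, $\Cst(L_h)\cong\C$, and $\norm{T_h}=\abs{\psi(h)}\,\Delta_H(h)^{-1/2}\neq\abs{\psi(h)}$; so the bound fails. To repair this you would have to factor the $L$\nobreakdash-modular function as an $H$\nobreakdash-part pulled back along $F^1$ times a fibrewise part compatible with the $\Cst(G_{\s(h)},\Banb)$-$\Cst(G_{\rg(h)},\Banb)$ bimodule structure, prove that $F^0_*\alpha$ is $H$\nobreakdash-quasi-invariant with that $H$\nobreakdash-part as its modular function, and show (for $\bar\alpha$-almost all $y$, with nullsets controlled uniformly in $h$) that the disintegrated spaces $\Hils_y$ carry the integrated representations of $\Cst(G_y,\Banb)$ needed to invoke the Equivalence Theorem fibrewise. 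None of this follows from~\eqref{eq:modular_functions_equal}, which only relates $\Delta$ and $\Delta_G$ under left $G$\nobreakdash-translation; it says nothing about a factorization in the $H$\nobreakdash-direction. These missing steps are exactly the technical core that the paper's global factorization is designed to circumvent, so the proposal as written does not yield the lemma.
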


\begin{proof}[Proof of Lemma~\textup{\ref{lem:norm_estimate}}]
  Let \(\psi= \psi_1\cdot \psi_2\) with pointwise multiplication,
  where \(\psi_1\) is a measurable function \(H^1\times_{\rg,H^0,F^0}
  L^0 \to [0,\infty)\), viewed as a function on~\(L^1\) through the
  map~\((F^1,\rg)\), and~\(\psi_2\) is an
  \(\alpha\circ\nu\)-measurable section of~\(\Banb\).  We choose
  \(\psi_1(h,x) \defeq \norm{\psi|_{L_h}}_{\Cst(L_h)}^{1/2}\) and
  \(\psi_2(l) \defeq \psi(l)/\psi_2(F^1(l),\rg(l))\).  The
  function~\(\psi_1\) is upper semicontinuous and hence Borel, and
  hence~\(\psi_2\) is a Borel section.

  We are going to decompose \(\pi(\psi) = T_{\psi_1}^* \circ
  \tilde{T}_{\psi_2}\) for two operators
  \[
  T_{\psi_1}, \tilde{T}_{\psi_2}\colon \LL^2(L^0,(\Hils_x),\alpha)
  \rightrightarrows
  \LL^2(H^1\times_{\rg,H^0,F^0} L^0,\pr_2^*(\Hils_x), \alpha\circ\mu).
  \]
  Here
  \[
  \alpha\circ\mu(f) = \int_{L^0} \int_{H^{F^0(x)}}
  f(h,x) \,\dd\mu^{F^0(x)}(h) \,\dd\alpha(x)
  \]
  and \(\pr_2^*(\Hils_x)\) means the \(\alpha\circ\mu\)-measurable
  field of Hilbert spaces over \(H^1\times_{\rg,H^0,F^0} L^0\) with
  fibre~\(\Hils_x\) at~\((h,x)\).

  The operator~\(T_{\psi_1}\) is defined by
  \[
  (T_{\psi_1} \xi)(h,x) \defeq \psi_1(h,x)\cdot \xi(x)
  \]
  for \(\xi\in \LL^2(L^0,(\Hils_x),\alpha)\) and \((h,x)\in
  H^1\times_{\rg,H^0,F^0} L^0\).  There is a canonical isomorphism
  between \(\LL^2(H^1\times_{\rg,H^0,F^0} L^0,\pr_2^*(\Hils_x),
  \alpha\circ\mu)\) and the tensor product \(\LL^2(H^1,\rg,\mu)
  \otimes_{L^\infty(H^0,F^0_*\alpha)} \LL^2(L^0,(\Hils_x), \alpha)\),
  where we view the measurable field of Hilbert spaces
  \(\LL^2(H^x,\mu^x)\) as a Hilbert module \(\LL^2(H^1,\rg,\mu)\)
  over~\(L^\infty(H^0,F^0_*\alpha)\), which acts on
  \(\LL^2(L^0,(\Hils_x), \alpha)\) by pointwise multiplication
  through~\(F^0\).  With this identification, \(T_{\psi_1}(\xi) =
  \psi_1\otimes \xi\).  This allows to compute \(T_{\psi_1}^*\)
  and~\(T_{\psi_1}^* T_{\psi_1}\).  First,
  \[
  (T_{\psi_1}^* \omega)(x) =
  \int_{H^{F^0(x)}} \conj{\psi_1(h,x)} \omega(h,x)
  \,\dd\mu^{F^0(x)}(h)
  \]
  for \(\omega\in \LL^2(H^1\times_{\rg,H^0,F^0} L^0,\pr_2^*(\Hils_x),
  \alpha\circ\mu)\) and \(x\in L^0\).  Secondly, \(T_{\psi_1}^*
  T_{\psi_1}\) multiplies a section in \(\LL^2(L^0,(\Hils_x),\alpha)\)
  pointwise with the function
  \[
  x\mapsto \int_{H^{F^0(x)}} \abs{\psi_1(h,x)}^2
  \,\dd\mu^{F^0(x)}(h).
  \]
  So
  \[
  \norm{T_{\psi_1}}^2 = \sup_{x\in L^0} \int_{H^{F^0(x)}}
  \abs{\psi_1(h,x)}^2 \,\dd\mu^{F^0(x)}(h).
  \]
  When we choose~\(\psi_1\) as above, we get \(\norm{T_{\psi_1}} =
  \norm{\psi}_H^{1/2}\).

  Let \(\Delta(l) \defeq
  \frac{\dd(\alpha\circ\tilde\nu)}{\dd(\alpha\circ\nu)}(l)\) for \(l\in
  L^1\).  We define
  \[
  \tilde{T}_{\psi_2} \xi(h,x) \defeq
  \int_{L_h^x} \Pi_l(\psi_2(l)) \xi(\s(l)) \Delta(l)
  \,\dd\dot\lambda^{(h,x)}(l)
  \]
  for \(\xi\in \LL^2(L^0,(\Hils_x),\alpha)\) and \((h,x)\in
  H^1\times_{\rg,H^0,F^0} L^0\).  The function~\(\Delta\) is defined
  \(\alpha\nu\)-almost everywhere on~\(L^1\).  Therefore, there is an
  \(\alpha\mu\)\nb-nullset in \(H^1\times_{\rg,H^0,F^0} L^0\) so
  that~\(\Delta(l)\) is defined for \(\dot\lambda^{(h,x)}\)\nb-almost
  all \(l\in L_h^x\) for~\((h,x)\) outside this nullset.  Hence the
  integrand above exists on sufficiently many points to be meaningful.

  We assume that~\(\psi_2\) has quasi-compact support until we have
  proved estimates that allow to extend to more general functions.
  Then the integral defining~\(\tilde{T}_{\psi_2}\xi (h,x)\) is over a
  quasi-compact subset and hence finite, and the resulting function on
  \(H^1\times_{\rg,H^0,F^0} L^0\) has quasi-compact support and hence
  belongs to the Hilbert space \(\LL^2(H^1\times_{\rg,H^0,F^0}
  L^0,\pr_2^*(\Hils_x), \alpha\circ\mu)\).  By definition,
  \begin{align*}
    T_{\psi_1}^* \tilde{T}_{\psi_2}\xi(x)
    &= \int_{H^{F^0(x)}} \int_{L_h^x} \conj{\psi_1(h,x)} \Pi_l(\psi_2(l))
    \xi(\s(l)) \Delta(l) \,\dd\dot\lambda^{(h,x)}(l)
    \,\dd\mu^{F^0(x)}(h)
    \\&= \pi(\psi_1\cdot \psi_2)\xi(x).
  \end{align*}
  Hence
  \begin{equation}
    \label{eq:norm_estimate_reduction}
    \norm{\pi(\psi)} \le \norm{T_{\psi_1}}
    \norm{\tilde{T}_{\psi_2}}.
  \end{equation}
  It remains to estimate \(\norm{\tilde{T}_{\psi_2}}\).  For this
  purpose, we compute \(\tilde{T}_{\psi_2}^*\) and
  \(\tilde{T}_{\psi_2}^* \tilde{T}_{\psi_2}\).

  Let \(\omega\in \LL^2(H^1\times_{\rg,H^0,F^0} L^0,\pr_2^*(\Hils_x),
  \alpha\circ\mu)\) and \(\xi\in \LL^2(L^0,(\Hils_x),\alpha)\) with
  quasi-compact support.  Then
  \begin{align*}
    \bigbraket{\omega}{\tilde{T}_{\psi_2} \xi}
    &= \iiint \bigbraket{\omega(h,x)}{\Pi_l(\psi_2(l)) \xi(\s(l)) \Delta(l)}
    \,\dd\dot\lambda^{(h,x)}(l) \,\dd\mu^{F^0(x)}(h) \,\dd\alpha(x)
    \\&= \int \bigbraket{\omega(F^1(l),\rg(l))}{\Pi_l(\psi_2(l))
      \xi(\s(l))} \Delta(l) \,\dd(\nu\alpha)(l)
    \\&= \int \bigbraket{\omega(F^1(l^{-1}),\s(l))}{\Pi_{l^{-1}}(\psi_2(l^{-1}))
    \xi(\rg(l))} \Delta(l) \,\dd(\nu\alpha)(l)
    \\&= \iint \bigbraket{\Pi_l(\psi_2(l^{-1})^*) \omega(F^1(l^{-1}),\s(l))}
    {\xi(\rg(l))} \Delta(l) \,\dd\nu^x(l) \,\dd\alpha(x).
  \end{align*}
  Here the first step uses the definition of~\(\tilde{T}_{\psi_2}\);
  the second step uses the construction of the Haar system on~\(L\)
  and the definition of the composite measure \(\nu\circ\alpha\)
  on~\(L^1\); the third step uses the substitution \(l\mapsto
  l^{-1}\), which multiplies with the Radon--Nikodym
  derivative~\(\Delta(l)^{-2}\), and uses that~\(\Delta\) is a
  character almost everywhere to simplify \(\Delta(l^{-1}) \Delta(l)^2
  = \Delta(l)\); the last step expands \(\dd(\nu\alpha)(l)=
  \dd\nu^x(l) \,\dd\alpha(x)\) and uses that~\(\Pi\) is a
  \Star{}representation.  Thus \(\braket{\omega}{\tilde{T}_{\psi_2}
    \xi} = \braket{\tilde{T}_{\psi_2}^*\omega}{\xi}\) with
  \[
  \tilde{T}_{\psi_2}^*\omega(x)
  = \int \Pi_l(\psi_2^*(l)) \omega(F^1(l^{-1}),\s(l))
  \Delta(l) \,\dd\nu^x(l).
  \]
  Let \(\Delta_G(g) \defeq
  \frac{\dd(\alpha\circ\tilde\lambda)}{\dd(\alpha\circ\lambda)}(g)\)
  for \(g\in G^1\).  Then
  \begin{align*}
    &\phantom{{}={}}\tilde{T}_{\psi_2}^* \tilde{T}_{\psi_2} \xi(x)
    \\&= \iint \Pi_{l_1}(\psi_2^*(l_1)) \Pi_{l_2}(\psi_2(l_2))
    \xi(\s(l_2)) \Delta(l_1) \Delta(l_2)
    \,\dd\dot\lambda^{(F^1(l_1^{-1}),\s(l_1))}(l_2) \,\dd\nu^x(l_1)
    \\&= \iint
    \Pi_{l_1 l_2}(\psi_2^*(l_1) \psi_2(l_2))
    \xi(\s(l_1 l_2)) \Delta(l_1)\Delta(l_2)
    \,\dd\dot\lambda^{(F^1(l_1^{-1}),\s(l_1))}(l_2) \,\dd\nu^x(l_1)
    \\&= \iint
    \Pi_g(\psi_2^*(l_1) \psi_2(l_1^{-1} g)) \xi(\s(g))
    \Delta(l_1) \Delta(l_1^{-1} g)
    \,\dd\lambda^x(g) \,\dd\nu^x(l_1).
  \end{align*}
  The first step uses the definitions.  The second step uses
  that~\(\Pi\) is multiplicative and \(\s(l_1 l_2) = \s(l_2)\).  The
  third step substitutes \(g \defeq l_1 l_2\) for~\(l_2\), using the
  left \(L\)\nb-invariance of~\(\dot\lambda\).  Since \(F^1(l_2) =
  F^1(l_1^{-1})\), this gives elements of~\(G\).  Since
  \(\Delta(l^{-1}) = \Delta(l)^{-1}\) \(\alpha\circ\nu\)-almost
  everywhere and \(\Delta_G(g^{-1}) = \Delta_G(g)^{-1}\)
  \(\alpha\circ\lambda\)-almost everywhere, we may rewrite
  \(\Delta(l_1) \Delta(l_1^{-1} g) = \Delta(l_1) \Delta(g^{-1}
  l_1)^{-1} = \Delta(l_1) \Delta(l_1)^{-1} \Delta_G(g^{-1})^{-1} =
  \Delta_G(g)\) by~\eqref{eq:modular_functions_equal}; this holds
  almost everywhere with respect to the relevant measure.  Thus
  \(\tilde{T}_{\psi_2}^* \tilde{T}_{\psi_2} = \pi_G(\varphi)\) with
  \begin{align*}
    \varphi(g) &\defeq \int
    \psi_2^*(l) \psi_2(l^{-1} g) \,\dd\nu^{\rg(g)}(l)
    \\&= \int_{H^{F^0(\rg(g))}} \int_{L_h^{\rg(g)}}
    \psi_2^*(l) \psi_2(l^{-1} g)
    \,\dd\dot\lambda^{(h,\rg(g))}(l) \,\dd\mu^{F^0(\rg(g))}(h).
  \end{align*}
  The \(\Cst\)\nb-norm of this element of \(\Sect(G,\Banb)\) is a
  supremum over the \(\Cst\)\nb-norms of its restrictions in
  \(\Sect(G_y,\Banb)\) for \(y\in H^0\).  For \(g\in G_y\),
  \[
  \varphi(g) = \int_{H^y}
  \bigbraket{\psi_2|_{L_{h^{-1}}}}{\psi_2|_{L_{h^{-1}}}}_{\Cst(L_h)}(g)
  \,\dd\mu^y(h).
  \]
  Hence
  \begin{multline*}
    \norm{\varphi}_{\Cst(G,\Banb)} =
    \sup_{y\in H^0} \norm{\varphi|_{G_y}}_{\Cst(G_y,\Banb)}
    = \sup_{y\in H^0} \int_{H^y}
    \norm{\psi_2|_{L_{h^{-1}}}}^2_{\Cst(L_{h^{-1}},\Banb)} \,\dd\mu^y(h)
    \\= \sup_{y\in H^0} \int_{H^y}
    \norm{\psi_2^*|_{L_h}}^2_{\Cst(L_h,\Banb)} \,\dd\mu^y(h).
  \end{multline*}
  The last step uses that the involution is an isometry between
  \(\Cst(L_{h^{\pm1}},\Banb)\).  With the choice of~\(\psi_2\) above,
  this gives
  \[
  \norm{\tilde{T}_{\psi_2}}
  = \norm{\tilde{T}_{\psi_2}^* \tilde{T}_{\psi_2}}^{1/2}
  = \norm{\varphi}_{\Cst(G,\Banb)}^{1/2}
  = \norm{\psi^*}_H^{1/2}.
  \]
  In particular, \(\tilde{T}_{\psi_2}\) is bounded and well defined on
  \(\LL^2(L^0,(\Hils_x),\alpha)\) although the relevant~\(\psi_2\)
  need not have quasi-compact support.  Putting the norm estimates for
  \(T_{\psi_1}\) and~\(\tilde{T}_{\psi_2}\)
  into~\eqref{eq:norm_estimate_reduction} gives the desired estimate
  for~\(\pi(\psi)\).
\end{proof}

Since \(\Sect(L,\Banb) \cong \Sect(H,\FellH)\) is dense
in~\(\Sect(H,\bar{\FellH})\), Lemma~\ref{lem:norm_estimate} implies
that the faithful representation~\(\pi\) of~\(\Cst(L,\Banb)\) extends
uniquely to a representation of~\(\Sect(H,\bar{\FellH})\) that is
bounded for the \(I\)\nb-norm.  Hence we may extend further to the
\(\Cst\)\nb-algebra~\(\Cst(H,\bar{\FellH})\).  The resulting
representation of~\(\Cst(H,\bar{\FellH})\) maps the dense subalgebra
\(\Sect(H,\FellH) = \Sect(L,\Banb)\) into \(\pi(\Cst(L,\Banb))\).
Hence it maps~\(\Cst(H,\bar{\FellH})\) to~\(\pi(\Cst(L,\Banb)) \cong
\Cst(L,\Banb)\) as well.  We already know that~\(\Cst(L,\Banb)\)
maps into~\(\Cst(H,\bar{\FellH})\) via the assignment \(\xi\mapsto \tilde\xi\).
The map backwards is its inverse and shows that
\(\Cst(L,\Banb) \cong \Cst(H,\bar{\FellH})\).  This finishes the proof
of Theorem~\ref{the:sections_full}.

\begin{remark}
  Theorem~\ref{the:sections_full} has a large overlap with
  \cite{Buss-Meyer:Actions_groupoids}*{Theorem~5.5}, which asserts a
  similar crossed product decomposition for actions of inverse
  semigroups on groupoids.  The result
  in~\cite{Buss-Meyer:Actions_groupoids} implies
  Theorem~\ref{the:sections_full} in case the groupoid~\(H\) is étale.
  Here we use Theorem~\ref{the:S-action_groupoid_fibration} to replace
  the fibration by an action of the inverse semigroup of bisections
  of~\(H\), without changing the section algebras of the Fell bundles.
  Any inverse semigroup action can be replaced by an action of a
  topological groupoid,
  compare~\cite{Buss-Exel-Meyer:InverseSemigroupActions}; but these
  groupoids may have a non-Hausdorff object space and thus lie outside
  the scope of Theorem~\ref{the:sections_full}, which only covers
  inverse semigroup actions that come from actions of groupoids with
  Hausdorff, locally compact object spaces.
\end{remark}

\begin{remark}
  \label{rem:fibration_crucial_Ramazan}
  Without the groupoid fibration condition, \(L_h\) need not be an
  equivalence between \(G_{\rg(h)}\) and~\(G_{\s(h)}\) and so the Fell
  bundle over~\(H\) constructed above need not be saturated.  The Fell
  bundle constructed in
  \cite{Deaconu-Kumjian-Ramazan:Fell_groupoid_morphism}*{Theorem 3.4}
  under weaker assumptions is not saturated.  Actually, the
  assumptions that are made
  in~\cite{Deaconu-Kumjian-Ramazan:Fell_groupoid_morphism}*{Theorem
    3.4} are not used in the proof: \emph{any} continuous functor
  \(L\to H\) between étale groupoids yields a (possibly non-saturated)
  Fell bundle over~\(H\) with fibres~\(L_h\) as above.
  Theorem~\ref{the:sections_full} should also carry over to this
  situation, but we did not check details carefully.
\end{remark}

\begin{remark}
  \label{rem:continuous_field}
  Some results about crossed products and Fell bundles are only proved
  if~\(\Banb\) restricts to a continuous field on the unit
  space~\(L^0\).  The Fell bundle~\(\FellH\) over~\(H\) has this
  property if~\(\Banb\) has it and \(F^0\colon L^0\to H^0\) is open.
\end{remark}

\begin{remark}
  \label{rem:act_Prim}
  An action of~\(H\) on a \(\Cst\)\nb-algebra, even in the form of a
  Fell bundle, induces a continuous action of~\(H\) on the primitive
  ideal space of the unit fibre
  (see~\cite{Ionescu-Williams:Remarks_ideal_structure}).  We briefly
  explain this construction.  We first assume~\(H\) to be Hausdorff.

  The \(\Cont_0(H^0)\)-\(\Cst\)-algebra structure on~\(\Cst(G,\Banb)\)
  with fibres~\(\Cst(G_y,\Banb)\) induces a continuous map~\(\psi\)
  from the primitive ideal space \(\Prim \Cst(G,\Banb)\) to~\(H^0\).
  This is the anchor map of the action.  When we pull back
  \(\Cst(G,\Banb)\) along \(\rg,\s\colon H^1\rightrightarrows H^0\) to
  \(\Cont_0(H^1)\)-\(\Cst\)\nb-algebras \(\rg^*(\Cst(G,\Banb))\) and
  \(\s^*(\Cst(G,\Banb))\), these have the primitive ideal spaces
  \(H^1\times_{\rg,H^0,\psi} \Prim \Cst(G,\Banb)\) and
  \(H^1\times_{\s,H^0,\psi} \Prim \Cst(G,\Banb)\), respectively.  The
  space of \(\Cont_0\)\nb-sections of the Fell bundle
  \((\Cst(L_h,\Banb))_{h\in H^1}\) is a \(\Cont_0(H^1)\)-linear
  imprimitivity bimodule between these two pull-backs of
  \(\Cst(G,\Banb)\).  By the Rieffel correspondence, this
  imprimitivity bimodule induces a homeomorphism over~\(H^1\) between
  the primitive ideal spaces
  \[
  H^1\times_{\s,H^0,\psi} \Prim \Cst(G,\Banb) \congto
  H^1\times_{\rg,H^0,\psi} \Prim \Cst(G,\Banb).
  \]
  This homeomorphism is of the form \((h,\mathfrak{p})\mapsto
  (h,h\cdot \mathfrak{p})\) for a continuous action of~\(H\) on
  \(\Prim \Cst(G,\Banb)\).

  If~\(H\) is not Hausdorff, then we should replace~\(H^1\) by a
  disjoint union \(\tilde{H}\defeq \bigsqcup_{U\in\mathcal{U}} U\) for
  a cover by Hausdorff, open subsets.  Then \(\Cont_0(\tilde{H})\)
  makes sense, and we get a map
  \[
  \tilde{H}\times_{\s,H^0,\psi} \Prim \Cst(G,\Banb) \congto
  \tilde{H}\times_{\rg,H^0,\psi} \Prim \Cst(G,\Banb)
  \]
  as above.  Furthermore, the maps coming from
  \(U_1,U_2\in\mathcal{U}\) coincide on the intersection \(U_1\cap
  U_2\), so that we get a well defined, continuous action
  of~\(H\) on \(\Prim \Cst(G,\Banb)\) even if~\(H\) is only locally
  Hausdorff.
\end{remark}

\section{Special cases and applications}
\label{sec:applications}

We now exhibit some special cases of Theorem~\ref{the:sections_full}.
In this section, we tacitly assume all groupoids to be second
countable, locally Hausdorff, locally compact, with Hausdorff unit
space and with a Haar system, and we tacitly assume all
\cstar{}algebras and Fell bundles to be separable.

\subsection{Action on the arrow space of a non-Hausdorff groupoid}
\label{sec:non-Hausdorff_act_arrow}

Let~\(H\) be a groupoid.  To make the problem non-trivial, assume that
the arrow space~\(H^1\) is non-Hausdorff.  We continue the
construction in~§\ref{sec:translation_arrows}.  Let \(H^1 =
\bigcup_{U\in\mathcal{U}} U\) be a cover by Hausdorff, open subsets
and let \(X\defeq \bigsqcup_{U\in\mathcal{U}} U\) with the canonical
map \(p\colon X\onto H^1\).  We have constructed a groupoid fibration
\((\s p)^*(H^0) \to H\) with fibre~\(p^*(H^1)\).
Theorem~\ref{the:sections_full} gives a Fell bundle over~\(H\) with
unit fibre~\(\Cst(p^*(H^1))\) and section \(\Cst\)\nb-algebra
\(\Cst((\s p)^*(H^0))\).  The \(\Cst\)\nb-algebra of the \v{C}ech
groupoid~\(p^*(H^1)\) is the standard way to turn the locally
Hausdorff space~\(H^1\) into a \(\Cst\)\nb-algebra.  It is a Fell
algebra with spectrum~\(H^1\) and trivial Dixmier--Douady invariant
(see~\cite{Huef-Kumjian-Sims:Dixmier-Douady}).  As
in~\cite{Buss-Meyer:Actions_groupoids}, the action of~\(H\) on \(\Prim
\Cst(G)\) is the translation action of~\(H\) on~\(H^1\); this follows
from the description of the action in Remark~\ref{rem:act_Prim}.  All
this justifies viewing the Fell bundle over~\(H\) constructed above as
a good \(\Cst\)\nb-algebraic description of the action of~\(H\)
on~\(H^1\).

Similarly, \(\Cst((\s p)^*(H^0))\) is a Fell algebra with
spectrum~\(H^0\) and trivial Dixmier--Douady invariant.  Since~\(H^0\)
is Hausdorff, it is even a continuous trace \(\Cst\)\nb-algebra, and
Morita--Rieffel equivalent to \(\Cst(H^0)=\Cont_0(H^0)\).  This
generalises the well known Morita--Rieffel equivalence \(\Cst(H\ltimes
H^1) \sim \Cont_0(H^0)\) for a Hausdorff groupoid.

More generally, let~\(X\) be a basic \(H\)\nb-space, that is, \(X\) is
a locally Hausdorff, locally compact space with a basic
\(H\)\nb-action (see \cites{Buss-Meyer:Actions_groupoids,
  Meyer-Zhu:Groupoids}).  Then we may cover~\(X\) by Hausdorff open
subsets,
producing a \v{C}ech groupoid~\(G\) that is equivalent to~\(X\).  The
action of~\(H\) on~\(X\) transfers to an ``action'' on~\(G\) in the
form of a groupoid fibration \(L\to H\) with fibre~\(G\).  Here \(L
\defeq p^*(H\ltimes X)\), where \(p\colon G^0\onto X\) is obtained
from the open covering of~\(X\).  Theorem~\ref{the:sections_full}
gives a Fell
bundle over~\(H\) with unit fibre~\(\Cst(G)\) and section
\(\Cst\)\nb-algebra \(\Cst(p^*(H\ltimes X))\).  This Fell bundle
describes the \(H\)\nb-action on~\(X\) in \(\Cst\)\nb-algebraic terms.

The \(\Cst\)\nb-algebras \(\Cst(G)\) and \(\Cst(p^*(H\ltimes X))\) are
Fell algebras with trivial Dixmier--Douady invariants and spectrum
\(X\) and~\(H\backslash X\), respectively.  For \(\Cst(p^*(H\ltimes
X))\), the proof uses that \(H\ltimes X\) is isomorphic to the \v{C}ech
groupoid of the open surjection \(X\onto H\backslash X\) because the
\(H\)\nb-action on~\(X\) is basic.

The following proposition says that the action of~\(H\) on its arrow
space~\(H^1\) cannot be modelled by a classical action by
automorphisms:

\begin{proposition}
  \label{pro:no_classical_action}
  Let~\(H\) be a non-Hausdorff, locally Hausdorff groupoid.  There is
  no classical action of~\(H\) on a groupoid~\(G\) such that~\(G\) has
  Hausdorff object space, is equivalent to the space~\(H^1\), and has
  an open surjective anchor map \(\rg_H\colon G^0 \onto H^0\), and
  such that the transformation groupoid is equivalent to~\(H^0\).
\end{proposition}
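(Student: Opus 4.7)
The plan is to assume such a classical action exists and derive a contradiction. Form the transformation groupoid \(L\defeq H\ltimes G\) as in §\ref{sec:classical_action}, so \(L^0 = G^0\) and \(L^1 = H^1 \times_{\s,H^0,\rg_H} G^1\). The strategy is to show that \(L^1\) must be Hausdorff, because \(L\) is equivalent to the Hausdorff space~\(H^0\), and yet cannot be, because a pair of non-separable points in~\(H^1\) produces a pair of non-separable points in~\(L^1\).

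For Hausdorffness, since \(L\) is equivalent to the space~\(H^0\) viewed as a groupoid with only identity arrows, \(L\) is principal, its orbit space \(L^0/L\) is homeomorphic to~\(H^0\), and the canonical map \((\rg_L,\s_L)\colon L^1 \to L^0 \times_{L^0/L} L^0\) is a homeomorphism (using our standing assumption that \(\s\) and \(\rg\) are open). Since both \(L^0 = G^0\) and \(L^0/L \cong H^0\) are Hausdorff, the fibre product is closed in \(L^0 \times L^0\) and hence Hausdorff, so \(L^1\) is Hausdorff.

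For the contradiction, by non-Hausdorffness of~\(H^1\), pick distinct \(h_1,h_2 \in H^1\) that cannot be separated by disjoint open sets. Continuity of \(\s\colon H^1\to H^0\) into the Hausdorff space~\(H^0\) forces \(\s(h_1)=\s(h_2)=:y\). Using surjectivity of~\(\rg_H\), pick \(x\in G^0\) with \(\rg_H(x)=y\); then \((h_1,1_x)\) and \((h_2,1_x)\) are distinct points of~\(L^1\). Suppose they had disjoint open neighbourhoods \(U_1,U_2\) in~\(L^1\); by the subspace topology on \(L^1 \subseteq H^1\times G^1\), I may shrink so that \(U_i = (V_i\times W_i)\cap L^1\) with \(V_i\ni h_i\) open in~\(H^1\) and \(W_i\ni 1_x\) open in~\(G^1\). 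The condition \(U_1\cap U_2=\emptyset\) translates into
\[
\s(V_1\cap V_2)\cap \rg_H(W_1\cap W_2) = \emptyset
\]
in~\(H^0\), and both sides are open (since \(\s\), \(\s_G\), and \(\rg_H\colon G^0\to H^0\) are open); the right-hand side is an open neighbourhood of~\(y\). The key observation is that \(y\in\overline{\s(V_1\cap V_2)}\): for any open \(U\ni y\) in~\(H^0\), the sets \(V_i\cap\s^{-1}(U)\) are open neighbourhoods of~\(h_i\) and so must intersect by non-separability, producing a point in \(V_1\cap V_2\) whose image under~\(\s\) lies in~\(U\). Hence the two displayed open sets must meet, contradicting their disjointness, and therefore \((h_1,1_x)\) and \((h_2,1_x)\) are not separable in~\(L^1\).

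The step I expect to require the most care is the first one: justifying that the equivalence \(L\sim H^0\) forces \(L^1\) to be Hausdorff, since the equivalence is given abstractly while the topology on~\(L^1\) is inherited as a subspace of \(H^1\times G^1\). This rests on the standard fact that for a principal groupoid equivalent to its Hausdorff orbit space, \((\rg,\s)\) identifies the arrow space homeomorphically with the closed orbit-equivalence relation in \(L^0\times L^0\); once that identification is granted, the rest is topological bookkeeping in the style of Theorem~\ref{the:locally_Hausdorff_compact}.
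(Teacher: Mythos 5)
Your argument is correct, and its overall strategy coincides with the paper's: assume the classical action exists, note that its transformation groupoid \(L=H\ltimes G\) must have Hausdorff arrow space because \(L\) is equivalent to the space \(H^0\), and then show that \(L^1=H^1\times_{\s,H^0,\rg_H}G^1\) cannot be Hausdorff. For the first half the paper gets by with slightly less than you invoke: equivalence to a space makes \(L\) basic, so \((\rg,\s)\colon L^1\to L^0\times L^0\) is a continuous injection into a Hausdorff space, which already forces \(L^1\) to be Hausdorff; your homeomorphism onto \(L^0\times_{L^0/L}L^0\) is the same fact in a stronger (but still correct) form, and the closedness remark is not even needed. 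Where you genuinely differ is the non-Hausdorffness step: the paper picks a sequence \((h_n)\) in \(H^1\) with two distinct limits and lifts \(\s(h_n)\) along the open surjection \(\rg_H\colon G^1\onto H^0\) via \cite{Williams:crossed-products}*{Proposition~1.15}, so it rests on the lifting lemma and implicitly on the standing countability assumptions, whereas you exhibit the explicit pair \((h_1,1_x)\), \((h_2,1_x)\) and show directly, using openness of \(\s\) and of \(\rg_H\colon G^0\onto H^0\), that any two box neighbourhoods of these points must meet inside the fibre product. Your variant is more elementary and works without any countability hypothesis; the paper's is shorter once the lifting lemma is quoted. Both arguments use only the open surjectivity of \(\rg_H\) and Hausdorffness of the object spaces, and, like the paper's proof, yours never actually needs the hypothesis that \(G\) is equivalent to \(H^1\).
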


\begin{proof}
  Let \(L\to H\) be a groupoid fibration with fibre~\(G\), such
  that~\(G\) is equivalent to~\(H^1\) and~\(L\) is equivalent
  to~\(H^0\).  Since~\(G\) is equivalent to a space, it is a basic
  groupoid.  So \((\rg,\s)\colon G^1\to G^0\times G^0\) is injective,
  forcing~\(G^1\) to be Hausdorff because~\(G^0\) is Hausdorff.  The
  same argument shows that~\(L^1\) is Hausdorff.

  The arrow space for a classical action of~\(H\) on~\(G\) is
  \(H^1\times_{\s,H^0,\rg_H} G^1\).  We claim that this is
  non-Hausdorff if~\(H^1\) is non-Hausdorff and \(\rg_H\colon G^0\onto
  H^0\) is an open surjection.  Hence the fibration \(L\to H\)
  cannot come from a classical action.

  Since~\(H^1\) is non-Hausdorff, there is a sequence~\((h_n)\)
  in~\(H^1\) with two limits \(h\neq h'\).  Since \(\rg_H(g) =
  \rg_H(\rg(g))\) for \(g\in G^1\) and both \(\rg\colon G^1\onto G^0\)
  and \(\rg_H\colon G^0\onto H^0\) are open surjections, so is
  \(\rg_H\colon G^1\onto H^0\).  Hence we may lift the convergent
  sequence~\(\s(h_n)_{n\in\N}\) in~\(H^0\) to a convergent
  sequence~\((g_n)\) in~\(G^1\) with limit, say, \(g\) (see
  \cite{Williams:crossed-products}*{Proposition~1.15}).
  Then~\((h_n,g_n)\) is a sequence in \(H^1\times_{\s,H^0,\rg_H} G^1\)
  that converges both to \((h,g)\) and~\((h',g)\).
\end{proof}

If the \(H\)\nb-action on~\(G\) models the translation action
on~\(H^1\), then~\(G\) should be equivalent to~\(H^1\), the anchor map
\(G^0\to H^0\) should correspond to the anchor map \(\rg\colon
H^1\onto H^0\) of the translation action and hence be an open
surjection, and the transformation groupoid should be equivalent to
\(H\ltimes H^1 \sim H^0\).  Thus the assumptions in
Proposition~\ref{pro:no_classical_action} should hold for any model of
this action.

Proposition~\ref{pro:no_classical_action} is related to
\cite{Buss-Meyer:Actions_groupoids}*{Theorem~7.1}, which forbids the
existence of a classical action (by isomorphisms) of~\(H\) on a
\cstar{}algebra~\(A\) with \(\Prim(A)\cong H^1\) so that the induced
action on~\(\Prim(A)\) is the translation action on~\(H^1\).  The
results in~\cite{Buss-Meyer:Actions_groupoids} are written only for
étale groupoids.  The idea of the proof may, however, be extended to
general locally compact groupoids.

\subsection{Groupoid fibration from a bibundle}
\label{sec:fib_bibundle}

Consider the groupoid fibration associated to a
\(G,H\)-bibundle~\(X\) in Example~\ref{exa:bitransformation_groupoid}.
We assume~\(X\) to be second countable, Hausdorff and locally compact,
so that the groupoid \(G\ltimes X\rtimes H\) satisfies the standing
assumptions for this section.

The canonical functor \(G\ltimes X\rtimes H\to H\) is a fibration with
fibre \(G\ltimes X\), see Example~\ref{exa:bitransformation_groupoid}.
The groupoid \(G\ltimes X\) inherits a Haar system from~\(G\), and
\(G\ltimes X\rtimes H\) inherits a Haar system from \(G\ltimes X\)
and~\(H\) by Theorem~\ref{the:inherit_Haar_measures}.  This is equal
to the Haar system induced in the most obvious way from the Haar
systems on \(G\) and~\(H\).  Given a (separable, saturated) Fell
bundle~\(\Banb\) over \(G\ltimes X\rtimes H\),
Theorem~\ref{the:sections_full} provides a Fell bundle over~\(H\)
whose section \cstar{}algebra is canonically isomorphic to
\(\Cst(G\ltimes X\rtimes H,\Banb)\) and with unit fibre
\(\Cst(G\ltimes X,\Banb)\).

Brown, Goehle and
Williams~\cite{Brown-Goehle-Williams:Equivalence_iterated} have
recently proved this under the extra assumptions that~\(X\) be an
equivalence bibundle and~\(\Banb\) be the Fell bundle associated to an
ordinary action of \(G\ltimes X\rtimes H\) on some
\cstar{}algebra~\(A\) by automorphisms.  Our analysis works for any
 \(G,H\)\nb-space~\(X\).  If~\(\Banb\) comes from an
action by automorphisms, then the Fell bundle over~\(X\rtimes H\)
constructed above is also associated to an action of~\(H\) on
\(A\rtimes (G\ltimes X) = A\rtimes G\) by automorphisms; this
\(H\)\nb-action is defined as in
\cite{Brown-Goehle-Williams:Equivalence_iterated}*{Proposition~3.7}.
As explained in \cite{Brown-Goehle-Williams:Equivalence_iterated}, the
crossed product decomposition \(A\rtimes (G\ltimes X\rtimes H)\cong
(A\rtimes G)\rtimes H\) is useful to study Brauer semigroups and
symmetric imprimitivity theorems for groupoid actions as
in~\cite{Brown-Goehle:Brauer_semigroup}.

\subsection{Crossed products for groupoid extensions}
\label{sec:crossed_groupoid_extensions}

The statement of Theorem~\ref{the:sections_full} was already proved
for several classes of groupoid extensions \(G\into L\onto H\).
If~\(F^0\) is a homeomorphism as in~§\ref{sec:groupoid_extensions},
then we replace~\(L\) by an isomorphic groupoid so that~\(F^0\) is the
identity map.  We assume this in this subsection.

\subsubsection{Group extensions}
\label{sec:group_extensions}

First assume that we are dealing with a group extension, that is, the
object spaces are a single point.  This is the most classical case of
an iterated crossed product decomposition.  The assertion of
Theorem~\ref{the:sections_full} in this case follows from a similar
statement in \cite{Doran-Fell:Representations_2}*{VIII.6} for
\(\LL^1\)\nb-section algebras of Fell bundles instead of
\cstar{}algebras.  It also follows from
\cite{Buss-Meyer:Crossed_products}*{Theorem~5.2}, which proves a
similar decomposition for Fell bundles over crossed modules.

Even if the Fell bundle~\(\Banb\) over~\(L\) is the most trivial one
that leads to the group \(\Cst\)\nb-algebra~\(\Cst(L)\), the Fell
bundle over \(H=L/G\) is usually not associated to an action of~\(H\)
on~\(\Cst(G)\) by automorphisms.  We need either Fell bundles,
Busby--Smith twisted actions, or Green twisted actions to make sense
of this action.  In the language of Green twisted actions, the
assertion of Theorem~\ref{the:sections_full} for group extensions is
proved in \cites{Green:Local_twisted,Chabert-Echterhoff:Twisted}.

A split extension \(G\into L\onto H\) of locally compact groups
corresponds to a semidirect product decomposition \(L\cong G\rtimes
H\) for an action of~\(H\) on~\(G\) as in
Definition~\ref{def:classical_action}.  In this case, the induced
action of~\(H\) on~\(\Cst(L)\) is by automorphisms, and the assertion
of Theorem~\ref{the:sections_full} is well known and easy to prove
using the universal properties of the group \cstar{}algebras and
crossed products involved.

We also get an analogous result for the transformation (or semidirect
product) groupoid \(L=G\rtimes H\) associated to a classical action of
a groupoid~\(H\) on another groupoid~\(G\) as constructed in
§\ref{sec:classical_action}.  We may even allow arbitrary Fell bundles
over~\(L\) and get a decomposition of the form
\[
\Cst(L,\Banb)\cong \Cst(G,\Banb|_G)\rtimes H.
\]
The special case of this result where~\(H\) is a (locally compact)
group, \(G\) is a locally compact Hausdorff groupoid (with Haar
system) and~\(\Banb\) is the ``semidirect product'' Fell bundle
\(\Banb=\Banb|_G\rtimes H\) associated to an (``invariant'') action
of~\(H\) on~\(\Banb|_G\) appears in
\cite{Kaliszewski-Muhly-Quigg-Williams:Coactions_Fell}*{Theorem~7.1}.
This special case does not cover all Fell bundles over~\(L\), but it
already has many applications: it is used in
\cite{Kaliszewski-Muhly-Quigg-Williams:Coactions_Fell} to prove that
``dual coactions'' on section \cstar{}algebras of (saturated)
Fell bundles over locally compact groups are maximal; and it is also
used in
\cites{Kaliszewski-Muhly-Quigg-Williams:Fell_bundles_and_imprimitivity_theoremsI,
  Kaliszewski-Muhly-Quigg-Williams:Fell_bundles_and_imprimitivity_theoremsII,
  Kaliszewski-Muhly-Quigg-Williams:Fell_bundles_and_imprimitivity_theoremsIII}
to prove imprimitivity theorems for Fell bundles.

\subsubsection{The open isotropy group bundle}
\label{sec:open_isotropy}

Let~\(L\) be an étale groupoid.  Let \(G\defeq\Iso^\circ(L)\) be the
interior of the isotropy subgroupoid of~\(L\) as in
Example~\ref{exa:isotropy-ext}; that is, \(G^0=L^0\) and \(G^1\) is
the interior of the subset \(\{g\in L^1\mid \s(g)=\rg(g)\}\)
of~\(L^1\).  This subset is open and hence also locally compact, and
it gives an étale, normal subgroup bundle in~\(L\).  The quotient
\(H=L/G\) is an étale groupoid with \(H^0\cong
G^0=L^0\) which fits into an extension \(G\into L\onto H\)
(Remark~\ref{rem:H_etale} explains why~\(H\) is étale).

Theorem~\ref{the:sections_full} says, in particular, that \(\Cst(L)\)
is the \cstar{}algebra of a saturated Fell bundle over~\(H\) with
fibres~\(\Cst(L_h)\) and with unit fibre~\(\Cst(G)\).  More generally,
if \(\Banb\) is a Fell bundle over~\(L\), then \(\Cst(L,\Banb)\) is
the \cstar{}algebra of a Fell bundle over~\(H\) with fibres
\(\Cst(L_h,\Banb)\) and with unit fibre~\(\Cst(G,\Banb)\).

A special case of this result is proved in
\cite{Ionescu-Kumjian-Sims-Williams:Stabilization}*{Corollary~3.12},
under several extra assumptions.  Namely,
in~\cite{Ionescu-Kumjian-Sims-Williams:Stabilization} \(L\) is assumed
amenable and Hausdorff, \(G\) is assumed closed and of the special
form \(G^0\times \Gamma\) for some amenable group~\(\Gamma\), and
\(\Banb\) is assumed to be the Fell line bundle associated to a
continuous \(2\)\nb-cocycle on~\(L\) (see also~§\ref{sec:twists} below).
Theorem~\ref{the:sections_full} shows that all these assumptions are
unnecessary.

The crossed product decomposition theorem for the open isotropy group
bundle is particularly useful to describe the ideal structure of a
crossed product.  By Renault's pioneering result
\cite{Renault:Ideal_structure}*{Corollary~4.9}, the ideals of a
groupoid crossed product \(A\rtimes L\) for a Green twisted action of
a groupoid~\(L\) are
equivalent to \(L\)\nb-invariant ideals of the coefficient
\cstar{}algebra~\(A\) provided the induced action of~\(L\)
on~\(\Prim(A)\) is amenable and essentially free; see
\cite{Renault:Ideal_structure} for the precise definitions.  Renault's
theorem applies, in particular, if the groupoid that is acting is
Hausdorff, amenable, and essentially free.  Counterexamples to
Renault's theorem for non-Hausdorff~\(L\) are described
in~\cite{Exel:Non-Hausdorff}.

Renault assumes the coefficient algebra to be a continuous field of
\(\Cst\)\nb-algebras over~\(L^0\).  Presumably this was because the
larger class of \(\Cont_0(L^0)\)-\(\Cst\)-algebras that we use was not
yet so widely used at the time of~\cite{Renault:Ideal_structure}.
Similarly, Renault only studies Green twisted actions because Fell
bundles were not so widely used then.  His result carries over
to saturated Fell bundles for the following reason.

Every Fell bundle over a Hausdorff groupoid~\(L\) is Morita--Rieffel
equivalent to an ordinary action of~\(L\).  This is proved in
\cite{Ionescu-Kumjian-Sims-Williams:Stabilization}*{Corollary~3.9},
and it also follows from
\cite{Buss-Meyer-Zhu:Higher_twisted}*{Theorem~5.3} because Fell
bundles over~\(L\) are equivariantly equivalent to weak actions
of~\(L\) in the sense of~\cite{Buss-Meyer-Zhu:Higher_twisted}; these
are exactly the actions by \cstar{}equivalences that we use here and
in~\cite{Buss-Meyer:Crossed_products}.
Hence Renault's result carries over to (saturated) \emph{continuous}
Fell bundles over Hausdorff groupoids.  Continuity here means that the
underlying field of \cstar{}algebras over~\(L^0\) is continuous.  In
this case, the corresponding ordinary action is also continuous and we
may use Renault's original theorem in~\cite{Renault:Ideal_structure};
this is exactly what is done in
\cite{Ionescu-Kumjian-Sims-Williams:Stabilization}*{Corollary~3.9}.
The continuity assumption is probably not necessary here, but
Hausdorffness is crucial: for non-Hausdorff~\(L\), there are Fell
bundles that are not equivariantly equivalent to ordinary actions, see
\cite{Buss-Meyer:Crossed_products}*{§7}.

The quotient \(H\defeq L/\Iso(L)^\circ\) for an étale groupoid~\(L\)
is the largest quotient that is essentially free.  Thus any action
of~\(H\) is essentially free.  If \(H\) is Hausdorff and amenable,
then the ideal structure of section \(\Cst\)\nb-algebras of Fell
bundles over~\(H\) is completely understood in terms of the action
of~\(H\) on the ideal lattice of the unit fibre.  This is determined
by the action of~\(H\) on the primitive ideal space, which is
described in Remark~\ref{rem:act_Prim}.  The
quotient~\(L/\Iso(L)^\circ\) is Hausdorff if and only
if~\(\Iso(L)^\circ\) is closed in~\(L\) by
Proposition~\ref{pro:condition-Hausdorff}.

Theorem~\ref{the:sections_full} decomposes a crossed product for a
general étale groupoid~\(L\) into one by the étale, normal group
bundle~\(\Iso(L)^\circ\) and one by the essentially free quotient \(H
\defeq L/\Iso(L)^\circ\).  Renault's description of the ideal
structure applies to the crossed product by~\(H\) provided~\(H\) is
Hausdorff and amenable.  This idea is used
\cite{Ionescu-Kumjian-Sims-Williams:Stabilization}*{Theorem~4.5} to
describe the ideal structure of \cstar{}algebras of twisted
higher-rank graphs without singular vertices.  Such algebras are
section \cstar{}algebras of Fell bundles over étale groupoids~\(L\) as
above, and the assumptions imply that the isotropy group
bundle~\(\Iso(L)^\circ\) is of the form \(L^0\times \Gamma\) for some
abelian group~\(\Gamma\).

\subsubsection{Twists over groupoids}
\label{sec:twists}

A twist over a groupoid~\(H\) is another groupoid~\(L\) satisfying our
standing assumptions, which fits into a central extension of the form
\(X\times \Torus\to L\to H\), where \(X=L^0=H^0\), see
Example~\ref{exa:extensions}.  Theorem~\ref{the:inherit_Haar_measures}
gives a canonical Haar system on~\(L\), which allows to form the
groupoid \cstar{}algebra~\(\Cst(L)\).

A \emph{Fell line bundle} over~\(H\) is a Fell bundle~\(\Banb\) where
all fibres~\(\Banb_l\) have dimension~\(1\).  Twists over~\(H\)
correspond to Fell line bundles over~\(H\) by going back and forth
between principal \(\Torus\)\nb-bundles and (complex) Hermitian line
bundles (see~\cite{Deaconu-Kumjian-Ramazan:Fell_groupoid_morphism}).
If~\(L\) is a twist as above, then~\(\Torus\) acts on~\(L\) by
pointwise multiplication, and we get a Hermitian line bundle
\(\Banb\defeq \C\times_\Torus L\); this is the orbit space of
\(\C\times L\) by the diagonal \(\Torus\)\nb-action \(z\cdot
(\lambda,\sigma)=(\lambda\overline{z},z\cdot \sigma)\).  The groupoid
structure of~\(L\) gives~\(\Banb\) the structure of a Fell line bundle
over~\(G\).  Conversely, a Fell line bundle~\(\Banb\) gives a twist
\(L\defeq \{u\in \Banb\mid u^* u=1\}\) or, equivalently, the subspace
of unitary elements of~\(\Banb\).  These two constructions are inverse
to each other.

The twisted groupoid \cstar{}algebra \(\Cst(G,L)\) is, by
definition, the section \cstar{}algebra of the corresponding Fell line
bundle~\(\Banb\).  We are going to identify this with a certain direct
summand in~\(\Cst(L)\).

\begin{corollary}
  Let~\(L\) be a twist over~\(H\) as above and let~\(\Banb\) be the
  associated Fell line bundle.  Then \(\Cst(L)\) is isomorphic to
  the section \cstar{}algebra of a Fell bundle~\(\FellH\) over~\(H\)
  with unit fibre
  \[
  \Cst(H^0,\FellH)\cong \Cst(X\times \Torus)
  \cong \bigoplus_{n\in\Z} \Cont_0(X).
  \]
  The Fell bundle~\(\FellH\) is a direct sum of Fell line bundles,
  where the \(n\)th summand is the \(n\)\nb-fold tensor product
  \(\Banb^{\otimes n} = \Banb\otimes \Banb \otimes \dotsb \otimes
  \Banb\).  Hence
  \[
  \Cst(L)\cong \bigoplus_{n\in \Z} \Cst(\FellH_n)
  \cong \bigoplus_{n\in \Z} \Cst(\Banb^{\otimes n}).
  \]
  If~\(L\) is the extension associated to a Borel \(2\)\nb-cocycle
  \(\omega\colon H^1\times_{\s,\rg} H^1\to \Torus\), then
  \(\Cst(\FellH_n)\cong \Cst(H,\omega^n)\) and \(\Cst(L)\cong
  \bigoplus_{n\in \Z}\Cst(H,\omega^n)\).
\end{corollary}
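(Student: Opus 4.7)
The plan is to apply Theorem~\ref{the:sections_full} to the central extension $G \into L \onto H$ with $G = X \times \Torus$, taking the trivial Fell bundle over~$L$. This immediately produces a Fell bundle $\FellH$ over~$H$ with section $\Cst$-algebra $\Cst(L)$, fibre $\Cst(L_h)$ at $h \in H^1$, and unit fibre $\Cst(G) = \Cst(X \times \Torus)$. Pontryagin duality for the compact group $\Torus$ identifies $\Cst(\Torus) \cong \Cont_0(\Z)$, so $\Cst(X \times \Torus) \cong \bigoplus_{n \in \Z} \Cont_0(X)$, which settles one assertion of the corollary.

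I would then exploit the central $\Torus$-action on~$L$ inherent in the extension. Since the action is central, it commutes with both left and right multiplication in~$L$, so it lifts to a continuous $\Torus$-action on the Fell bundle $\FellH$ over~$H$ by bundle automorphisms covering the identity on the base. Compactness of $\Torus$ together with Fourier analysis then splits $\FellH$ into isotypical Fell subbundles $\FellH_n$ for $n \in \Z$, where $\FellH_n$ collects the elements on which $z \in \Torus$ acts by~$z^n$. Fibrewise the decomposition is transparent: each $L_h$ is a $\Torus$-torsor, so $\Cst(L_h) \cong \Cst(\Torus) \cong \bigoplus_{n \in \Z} \C$, with the $n$-th summand spanned by functions satisfying $f(z\sigma) = \bar{z}^n f(\sigma)$.

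The key identification is then $\FellH_n \cong \Banb^{\otimes n}$ as Fell line bundles over~$H$. An element $f \in (\FellH_n)_h$ maps to the class $[f(\sigma), \sigma] \in \C \times_\Torus L_h$ computed with the $\Torus$-action $z \cdot \lambda = \bar{z}^n \lambda$ on~$\C$; the equivariance of~$f$ makes this well defined and independent of~$\sigma$. The Fell bundle structure respects the grading because characters of $\Torus$ multiply additively in the exponent: convolution $\FellH_n \times \FellH_m \to \FellH_{n+m}$ matches the natural tensor multiplication $\Banb^{\otimes n} \otimes \Banb^{\otimes m} \cong \Banb^{\otimes (n+m)}$, and involution sends $\FellH_n$ to $\FellH_{-n}$, matching $(\Banb^{\otimes n})^* \cong \Banb^{\otimes (-n)}$. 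Since the section $\Cst$-algebra of a direct sum of Fell bundles is the corresponding $\Cst$-direct sum, we obtain $\Cst(L) \cong \bigoplus_{n \in \Z} \Cst(H, \Banb^{\otimes n})$. For the cocycle version, a Borel $2$-cocycle~$\omega$ yields the Fell line bundle~$\Banb$, and $\omega^n$ yields $\Banb^{\otimes n}$, with $\Cst(\Banb^{\otimes n}) = \Cst(H, \omega^n)$ by definition of the twisted groupoid algebra.

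The main obstacle I anticipate is verifying rigorously that the isotypical decomposition genuinely produces saturated upper semicontinuous Fell subbundles whose section $\Cst$-algebras assemble into an internal $\Cst$-direct sum of $\Cst(L)$, rather than merely a dense spanning family. The natural tool is the family of averaging projections $E_n(f)(l) \defeq \int_\Torus \bar{z}^n f(z^{-1} \cdot l) \, \dd z$ on $\Sect(L)$; one would check that they are bounded in the inductive limit topology (hence extend to mutually orthogonal central projections on $\Cst(L)$) and identify the image of~$E_n$ with $\Sect(H, \Banb^{\otimes n})$, which then completes to $\Cst(H, \Banb^{\otimes n})$.
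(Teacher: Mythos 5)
Your proposal is correct and follows essentially the same route as the paper: apply Theorem~\ref{the:sections_full} to the central extension $X\times\Torus\into L\onto H$, use the central $\Torus$\nb-action to split everything into Fourier/spectral components, identify the $n$th component of $\Sect(L)$ with the quasi-continuous sections of $\Banb^{\otimes n}$, and let the main theorem guarantee that the decomposition survives the $\Cst$\nb-completion. The only cosmetic difference is that the paper obtains your averaging projections $E_n$ as the central projections $\varphi(z^n)$ coming from the nondegenerate central \Star{}homomorphism $\Cst(\Torus)\cong\Cont_0(\Z)\to\Mult(\Cst(L))$, which packages the boundedness and orthogonality you flag as the remaining obstacle.
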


This is proved in~\cite{Brown-Huef:Decomposing} for twists coming from
\emph{continuous} \(2\)\nb-cocycles over \emph{Hausdorff} groupoids.

\begin{proof}
  Since \(X\times \Torus\into L\) is central, the resulting
  \Star{}homomorphism \(\varphi\colon
  \Cst(\Torus)\to\Mult(\Cst(L))\) takes values in the centre.
  Since \(\Cst(\Torus)\cong \Cont_0(\Z)\), \(\Cst(L)\) is a
  \(\Cst\)\nb-algebra over the discrete space~\(\Z\).  That is,
  \(\Cst(L) \cong \bigoplus_{n\in\Z} \Cst(L_n)\), where
  \(\Cst(L_n)\subseteq \Cst(L)\) is the ideal generated by
  the central projection \(p_n=\varphi(z^n)\).  The issue is to
  identify these summands.

  This is fairly easy for the dense \Star{}subalgebra~\(\Sect(L)\):
  the \(n\)th spectral subspace~\(\Sect(L)_n\) consists of all
  quasi-continuous functions \(L\to\C\) that satisfy \(f(z\cdot
  \sigma) = z^n f(\sigma)\) for all \(z\in\Torus\), \(\sigma\in L\).
  For \(n=1\), this is the space of quasi-continuous sections
  of~\(\Banb\) (see, for instance, \cite{Renault:Cartan.Subalgebras}).
  For \(n\in\Z\), \(\Sect(L)_n\) is the space of quasi-continuous
  sections of~\(\Banb^{\otimes n}\).  The direct sum decomposition
  \(\Sect(L) \cong \bigoplus_{n\in\Z} \Sect(H,\Banb^{\otimes n})\)
  remains the same after taking \(\Cst\)\nb-completions by
  Theorem~\ref{the:sections_full}.  This is easier than the general
  situation considered in Theorem~\ref{the:sections_full} because the
  fibre has the simple form \(X\times\Torus\), but it is still
  non-trivial.

  A Borel \(2\)\nb-cocycle~\(\omega\) gives rise to a twist and thus
  to a Fell line bundle~\(\Banb\) with \(\Cst(H,\omega) =
  \Cst(H,\Banb)\).  The multiplication of cocycles corresponds to the
  tensor product of Fell line bundles.  Hence the claim for Fell line
  bundles contains the statement for Borel \(2\)\nb-cocycles as a
  special case.
\end{proof}

\subsection{Linking groupoids}
\label{sec:linking}

Now we return to the fibration involving the linking groupoid of an
equivalence of groupoids in Example~\ref{exa:linking-groupoid}.
Let \(G\) and~\(H\) be groupoids that satisfy our standing
assumptions, and let~\(X\) be an equivalence \(G,H\)-bibundle.
Let~\(L\) be the linking groupoid of~\(X\) as in
Example~\ref{exa:linking-groupoid}, and let \(F\colon L\to K\) be the
fibration to the finite groupoid with four arrows
\(\gamma,\gamma^{-1},1_{\s(\gamma)}, 1_{\rg(\gamma)}\).  The fibre
of~\(F\) is the disjoint union \(G\sqcup H\) of the groupoids \(G\)
and~\(H\).  The groupoid~\(K\) is discrete and hence étale, so it has
a canonical Haar system.  Theorem~\ref{the:inherit_Haar_measures}
gives a canonical Haar system on the linking groupoid~\(L\) which is
induced by the Haar systems on \(G\) and~\(H\) (see
also~\cite{Sims-Williams:Equivalence_reduced_groupoid}).

A (saturated) Fell bundle over~\(K\) consists of \cstar{}algebras
\(A\) and~\(B\), the fibres over \(\rg(\gamma)\) and~\(\s(\gamma)\),
and an equivalence \(A,B\)-bimodule~\(\Hilm\), namely, the fibre
over~\(\gamma\).  The section \(\Cst\)\nb-algebra of such a Fell
bundle over~\(K\) is simply the direct sum of the fibres with the
canonical \Star{}algebra structure: this is already complete in the
unique \(\Cst\)\nb-norm.  Hence it is the linking \cstar{}algebra
of~\(\Hilm\).  In particular, the Fell bundle
\(\bigl(\Cst(L_h)\bigr)_{h\in K}\) over~\(K\) constructed
in~§\ref{sec:crossed} has the fibres \(\Cst(G)\) and~\(\Cst(H)\) at
the unit arrows and \(\Cst(X)\) and~\(\Cst(X^*)\) at the non-identity
arrows in~\(K\).

Thus the assertion of Theorem~\ref{the:sections_full} in this case is
that the \(\Cst\)\nb-algebra of the linking groupoid of an equivalence
is the linking \(\Cst\)\nb-algebra of the induced equivalence of
groupoid \(\Cst\)\nb-algebras.  This result and its analogue for
reduced norms are known for Hausdorff groupoids, see
\cite{Sims-Williams:Equivalence_reduced_groupoid}*{Remark~2.4} and
\cite{Kumjian:Fell_bundles}*{Example~3.5(ii)}.

More generally, Theorem~\ref{the:sections_full} yields a similar
result with Fell bundle coefficients.  Let \(\A\) and~\(\Banb\) be
Fell bundles over \(G\) and~\(H\), respectively, and let~\(\Hilm\) be
an \emph{equivalence} over~\(X\) between them.  That is, \(\Hilm\) is
an upper-semicontinuous Banach bundle over~\(X\) with commuting
actions of~\(\A\) on the left and~\(\Banb\) on the right and ``inner
products'' satisfying the usual algebraic conditions
(see~\cite{Muhly-Williams:Equivalence.FellBundles}).  We may combine
this data into a Fell bundle~\(L(\Hilm)\) over~\(L\) whose
restrictions to \(G\), \(H\) and~\(X\) are the original bundles
\(\A\), \(\Banb\) and~\(\Hilm\), respectively; this is the
\emph{linking Fell bundle} as constructed
in~\cite{Sims-Williams:Equivalence_reduced}.
Theorem~\ref{the:sections_full} says that the linking \cstar{}algebra
of the equivalence \(\Cst(G,\A),\Cst(H,\Banb)\)-bimodule
\(\Cst(\Hilm)\) associated to the \(\A,\Banb\)-equivalence~\(\Hilm\)
is the section \cstar{}algebra of the linking Fell
bundle~\(L(\Hilm)\).  In brief, \(\Cst(L(\Hilm))\cong
L(\Cst(\Hilm))\).  For Hausdorff groupoids, this result is proved
in~\cite{Sims-Williams:Equivalence_reduced} together with the
analogous statement for reduced groupoid \(\Cst\)\nb-algebras.

\subsection{Strongly surjective cocycles}
\label{sec:cocycle_strongly_surjective}

Let~\(L\) be a groupoid and~\(H\) a discrete group.  A functor
\(F\colon L\to H\) is also called a \emph{cocycle}.  It is called
\emph{strongly surjective} if the map \((F,\rg)\colon L^1 \to H\times
L^0\) is surjective.  Equivalently, the map~\((F,\s)\) is surjective.
This map is automatically open by Remark~\ref{rem:openness_redundant}.
Hence a groupoid fibration to a discrete group~\(H\) is the same as
a strongly surjective cocycle \(L\to H\).  We interpret such a cocycle
as an \(H\)\nb-action on the fibre \(G\defeq F^{-1}(\{1\})\), which
may also be called the kernel of the cocycle.
Theorem~\ref{the:sections_full} describes \(\Cst(L)\) as the section
\(\Cst\)\nb-algebra of a Fell bundle over~\(H\) with unit
fibre~\(\Cst(G)\).

The case \(H=\Z\) is particularly simple.  The section
\(\Cst\)\nb-algebra of a \emph{saturated} Fell
bundle~\((\Banb_n)_{n\in\Z}\) over~\(\Z\) is the same as the
Cuntz--Pimsner algebra of the Hilbert bimodule~\(\Banb_1\) over the
\(\Cst\)\nb-algebra~\(\Banb_0\) because both have the same universal
property.  Thus Theorem~\ref{the:sections_full} specialises to the
main result of~\cite{Rennie-Robertson-Sims:Groupoid_CP} in this case.
The theorem in~\cite{Rennie-Robertson-Sims:Groupoid_CP} also covers
cocycles that are only ``unperforated,'' which is weaker than being
strongly surjective.  In this case, the construction of the Fell
bundle over~\(\Z\) still goes through, but it is no longer saturated.
The unperforation assumption ensures that the Fell bundle is
``generated'' by~\(\Banb_1\), which is good enough to identify its
section \(\Cst\)\nb-algebra with the Cuntz--Pimsner algebra
of~\(\Banb_1\).

If~\(H\) is a locally compact group such as~\(\R\), then the correct
generalisation of a strongly surjective cocycle \(L\to H\) is a
groupoid fibration, that is, a functor \(F^1\colon L\to H\) such that
\((F^1,\s)\) or, equivalently, \((F^1,\rg)\) is surjective and
\emph{open}.

The map~\(F^0\) to the one-point space~\(H^0\) is automatically an
open surjection.  Hence \(F^1\colon L^1\to H^1\) is an open surjection
for any groupoid fibration to a topological group by
Lemma~\ref{lem:F1_F0_open_surjective}.  But~\(F\) need not be a
groupoid fibration if~\(F^1\) is just an open surjection, see
Example~\ref{exa:surjective_not_enough}.

\begin{bibdiv}
  \begin{biblist}
\bib{Renault_AnantharamanDelaroche:Amenable_groupoids}{book}{
  author={Anantharaman-Delaroche, Claire},
  author={Renault, Jean},
  title={Amenable groupoids},
  series={Monographies de L'Enseignement Math\'ematique},
  volume={36},
  publisher={L'Enseignement Math\'ematique, Geneva},
  year={2000},
  pages={196},
  isbn={2-940264-01-5},
  review={\MRref {1799683}{2001m:22005}},
}

\bib{Brown-Goehle:Brauer_semigroup}{article}{
  author={Brown, Jonathan Henry},
  author={Goehle, Geoff},
  title={The Brauer semigroup of a groupoid and a symmetric imprimitivity theorem},
  journal={Trans. Amer. Math. Soc.},
  volume={366},
  date={2014},
  number={4},
  pages={1943--1972},
  issn={0002-9947},
  review={\MRref {3152718}{}},
  doi={10.1090/S0002-9947-2013-05953-3},
}

\bib{Brown-Goehle-Williams:Equivalence_iterated}{article}{
  author={Brown, Jonathan Henry},
  author={Goehle, Geoff},
  author={Williams, Dana P.},
  title={Groupoid equivalence and the associated iterated crossed product},
  journal={Houston J. Math.},
  volume={41},
  date={2015},
  number={1},
  pages={153--175},
  issn={0362-1588},
  review={\MRref {3347942}{}},
  eprint={http://www.math.uh.edu/~hjm/restricted/pdf41(1)/09brown.pdf},
}

\bib{Brown-Huef:Decomposing}{article}{
  author={Brown, Jonathan Henry},
  author={an Huef, Astrid},
  title={Decomposing the $C^*$\nobreakdash -algebras of groupoid extensions},
  journal={Proc. Amer. Math. Soc.},
  volume={142},
  date={2014},
  number={4},
  pages={1261--1274},
  issn={0002-9939},
  review={\MRref {3162248}{}},
  doi={10.1090/S0002-9939-2014-11492-4},
}

\bib{Brown:Extensions}{article}{
  author={Brown, Lawrence G.},
  title={Extensions of topological groups},
  journal={Pacific J. Math.},
  volume={39},
  date={1971},
  pages={71--78},
  issn={0030-8730},
  review={\MRref {0307264}{46\,\#6384}},
  eprint={http://projecteuclid.org/euclid.pjm/1102969770},
}

\bib{Brown:Fibrations_groupoids}{article}{
  author={Brown, Ronald},
  title={Fibrations of groupoids},
  journal={J. Algebra},
  volume={15},
  date={1970},
  pages={103--132},
  issn={0021-8693},
  review={\MRref {0271194}{42 \#6077}},
  doi={10.1016/0021-8693(70)90089-X},
}

\bib{Brown-Danesh-Hardy:Topological_groupoidsII}{article}{
  author={Brown, Ronald},
  author={Danesh-Naruie, G.},
  author={Hardy, J. P. L.},
  title={Topological groupoids. II. Covering morphisms and $G$\nobreakdash -spaces},
  journal={Math. Nachr.},
  volume={74},
  date={1976},
  pages={143--156},
  issn={0025-584X},
  review={\MRref {0442937}{56 \#1312}},
  doi={10.1002/mana.3210740110},
}

\bib{BussExel:Fell.Bundle.and.Twisted.Groupoids}{article}{
  author={Buss, Alcides},
  author={Exel, Ruy},
  title={Fell bundles over inverse semigroups and twisted \'etale groupoids},
  journal={J. Operator Theory},
  volume={67},
  date={2012},
  number={1},
  pages={153--205},
  issn={0379-4024},
  review={\MRref {2881538}{}},
  eprint={http://www.theta.ro/jot/archive/2012-067-001/2012-067-001-007.html},
}

\bib{Buss-Exel-Meyer:InverseSemigroupActions}{article}{
  author={Buss, Alcides},
  author={Exel, Ruy},
  author={Meyer, Ralf},
  title={Inverse semigroup actions as groupoid actions},
  journal={Semigroup Forum},
  volume={85},
  date={2012},
  number={2},
  pages={227--243},
  issn={0037-1912},
  doi={10.1007/s00233-012-9418-y},
  review={\MRref {2969047}{}},
}

\bib{Buss-Exel-Meyer:Reduced}{article}{
  author={Buss, Alcides},
  author={Exel, Ruy},
  author={Meyer, Ralf},
  title={Reduced \(C^*\)\nobreakdash -algebras of Fell bundles over inverse semigroups},
  note={\arxiv {1512.05570}},
  date={2015},
}

\bib{Buss-Meyer:Crossed_products}{article}{
  author={Buss, Alcides},
  author={Meyer, Ralf},
  title={Crossed products for actions of crossed modules on \(\textup C^*\)\nobreakdash -algebras},
  status={accepted},
  journal={J. Noncommut. Geom.},
  issn={1661-6952},
  note={\arxiv {1304.6540}},
  date={2016},
}

\bib{Buss-Meyer:Actions_groupoids}{article}{
  author={Buss, Alcides},
  author={Meyer, Ralf},
  title={Inverse semigroup actions on groupoids},
  status={accepted},
  note={\arxiv {1410.2051}},
  date={2016},
  issn={0035-7596},
  journal={Rocky Mountain J. Math.},
}

\bib{Buss-Meyer-Zhu:Higher_twisted}{article}{
  author={Buss, Alcides},
  author={Meyer, Ralf},
  author={Zhu, {Ch}enchang},
  title={A higher category approach to twisted actions on \(\textup C^*\)\nobreakdash -algebras},
  journal={Proc. Edinb. Math. Soc. (2)},
  date={2013},
  volume={56},
  number={2},
  pages={387--426},
  issn={0013-0915},
  doi={10.1017/S0013091512000259},
  review={\MRref {3056650}{}},
}

\bib{Chabert-Echterhoff:Twisted}{article}{
  author={Chabert, J\'er\^ome},
  author={Echterhoff, Siegfried},
  title={Twisted equivariant $KK$\nobreakdash -theory and the Baum--Connes conjecture for group extensions},
  journal={$K$\nobreakdash -Theory},
  volume={23},
  date={2001},
  number={2},
  pages={157--200},
  issn={0920-3036},
  review={\MRref {1857079}{2002m:19003}},
  doi={10.1023/A:1017916521415},
}

\bib{Deaconu-Kumjian-Ramazan:Fell_groupoid_morphism}{article}{
  author={Deaconu, Valentin},
  author={Kumjian, Alex},
  author={Ramazan, Birant},
  title={Fell bundles associated to groupoid morphisms},
  journal={Math. Scand.},
  volume={102},
  date={2008},
  number={2},
  pages={305--319},
  issn={0025-5521},
  review={\MRref {2437192}{2010f:46086}},
  eprint={http://www.mscand.dk/article/view/15064},
}

\bib{Doran-Fell:Representations_2}{book}{
  author={Doran, Robert S.},
  author={Fell, James M. G.},
  title={Representations of $^*$\nobreakdash -algebras, locally compact groups, and Banach $^*$\nobreakdash -algebraic bundles. Vol. 2},
  series={Pure and Applied Mathematics},
  volume={126},
  publisher={Academic Press Inc.},
  place={Boston, MA},
  date={1988},
  pages={i--viii and 747--1486},
  isbn={0-12-252722-4},
  review={\MRref {936629}{90c:46002}},
}

\bib{Exel:Inverse_combinatorial}{article}{
  author={Exel, Ruy},
  title={Inverse semigroups and combinatorial $C^*$\nobreakdash -algebras},
  journal={Bull. Braz. Math. Soc. (N.S.)},
  volume={39},
  date={2008},
  number={2},
  pages={191--313},
  issn={1678-7544},
  review={\MRref {2419901}{2009b:46115}},
  doi={10.1007/s00574-008-0080-7},
}

\bib{Exel:Non-Hausdorff}{article}{
  author={Exel, Ruy},
  title={Non-Hausdorff \'etale groupoids},
  journal={Proc. Amer. Math. Soc.},
  volume={139},
  date={2011},
  number={3},
  pages={897--907},
  issn={0002-9939},
  review={\MRref {2745642}{2012b:46148}},
  doi={10.1090/S0002-9939-2010-10477-X},
}

\bib{Green:Local_twisted}{article}{
  author={Green, Philip},
  title={The local structure of twisted covariance algebras},
  journal={Acta Math.},
  volume={140},
  date={1978},
  number={3-4},
  pages={191--250},
  issn={0001-5962},
  review={\MRref {0493349}{58\,\#12376}},
  doi={10.1007/BF02392308},
}

\bib{Higgins-Machenzie:Fibrations}{article}{
  author={Higgins, Philip J.},
  author={Mackenzie, Kirill C. H.},
  title={Fibrations and quotients of differentiable groupoids},
  journal={J. London Math. Soc. (2)},
  volume={42},
  year={1990},
  number={1},
  pages={101--110},
  issn={0024-6107},
  review={\MRref {1078178}{91k:20062}},
  doi={10.1112/jlms/s2-42.1.101},
}

\bib{Huef-Kumjian-Sims:Dixmier-Douady}{article}{
  author={an Huef, Astrid},
  author={Kumjian, Alex},
  author={Sims, Aidan},
  title={A Dixmier--Douady theorem for Fell algebras},
  journal={J. Funct. Anal.},
  volume={260},
  date={2011},
  number={5},
  pages={1543--1581},
  issn={0022-1236},
  review={\MRref {2749438}{2012i:46066}},
  doi={10.1016/j.jfa.2010.11.011},
}

\bib{Ionescu-Kumjian-Sims-Williams:Stabilization}{article}{
  author={Ionescu, Marius},
  author={Kumjian, Alex},
  author={Sims, Aidan},
  author={Williams, Dana P.},
  title={A stabilization theorem for Fell bundles over groupoids},
  status={preprint},
  date={2015},
  note={\arxiv {1512.06046}},
}

\bib{Ionescu-Williams:Remarks_ideal_structure}{article}{
  author={Ionescu, Marius},
  author={Williams, Dana P.},
  title={Remarks on the ideal structure of Fell bundle $C^*$\nobreakdash -algebras},
  journal={Houston J. Math.},
  volume={38},
  date={2012},
  number={4},
  pages={1241--1260},
  issn={0362-1588},
  review={\MRref {3019033}{}},
  eprint={http://www.math.uh.edu/~hjm/restricted/pdf38(4)/13ionescu.pdf},
}

\bib{Kaliszewski-Muhly-Quigg-Williams:Coactions_Fell}{article}{
  author={Kaliszewski, Steven P.},
  author={Muhly, Paul S.},
  author={Quigg, John},
  author={Williams, Dana P.},
  title={Coactions and Fell bundles},
  journal={New York J. Math.},
  volume={16},
  date={2010},
  pages={315--359},
  issn={1076-9803},
  review={\MRref {2740580}{2012d:46165}},
  eprint={http://nyjm.albany.edu/j/2010/16-13.html},
}

\bib{Kaliszewski-Muhly-Quigg-Williams:Fell_bundles_and_imprimitivity_theoremsI}{article}{
  author={Kaliszewski, Steven P.},
  author={Muhly, Paul S.},
  author={Quigg, John},
  author={Williams, Dana P.},
  title={Fell bundles and imprimitivity theorems},
  journal={M\"unster J. Math.},
  volume={6},
  date={2013},
  pages={53--83},
  issn={1867-5778},
  eprint={http://nbn-resolving.de/urn:nbn:de:hbz:6-25319580780},
  review={\MRref {3148208}{}},
}

\bib{Kaliszewski-Muhly-Quigg-Williams:Fell_bundles_and_imprimitivity_theoremsIII}{article}{
  author={Kaliszewski, Steven P.},
  author={Muhly, Paul S.},
  author={Quigg, John},
  author={Williams, Dana P.},
  title={Fell bundles and imprimitivity theorems: Mansfield's and Fell's theorems},
  journal={J. Aust. Math. Soc.},
  volume={95},
  date={2013},
  number={1},
  pages={68--75},
  issn={1446-7887},
  review={\MRref {3123744}{}},
  doi={10.1017/S1446788713000153},
}

\bib{Kaliszewski-Muhly-Quigg-Williams:Fell_bundles_and_imprimitivity_theoremsII}{article}{
  author={Kaliszewski, Steven P.},
  author={Muhly, Paul S.},
  author={Quigg, John},
  author={Williams, Dana P.},
  title={Fell bundles and imprimitivity theorems: towards a universal generalized fixed point algebra},
  journal={Indiana Univ. Math. J.},
  volume={62},
  date={2013},
  number={6},
  pages={1691--1716},
  issn={0022-2518},
  review={\MRref {3205528}{}},
  doi={10.1512/iumj.2013.62.5107},
}

\bib{Kumjian:Fell_bundles}{article}{
  author={Kumjian, Alex},
  title={Fell bundles over groupoids},
  journal={Proc. Amer. Math. Soc.},
  volume={126},
  date={1998},
  number={4},
  pages={1115--1125},
  issn={0002-9939},
  review={\MRref {1443836}{98i:46055}},
  doi={10.1090/S0002-9939-98-04240-3},
}

\bib{Tu-Xu-Laurent-Gengoux:Twisted_K}{article}{
  author={Laurent-Gengoux, Camille},
  author={Tu, Jean-Louis},
  author={Xu, Ping},
  title={Twisted $K$\nobreakdash -theory of differentiable stacks},
  journal={Ann. Sci. \'Ecole Norm. Sup. (4)},
  volume={37},
  date={2004},
  number={6},
  pages={841--910},
  issn={0012-9593},
  review={\MRref {2119241}{2005k:58037}},
  doi={10.1016/j.ansens.2004.10.002},
}

\bib{LiDu:Thesis}{thesis}{
  author={Li, Du},
  title={Higher groupoid actions, bibundles, and differentiation},
  institution={Georg-August-Universit\"at G\"ottingen},
  type={phdthesis},
  date={2014},
  eprint={http://hdl.handle.net/11858/00-1735-0000-0022-5F4F-A},
}

\bib{Mackenzie:General_Lie_groupoid_algebroid}{book}{
  author={Mackenzie, Kirill C. H.},
  title={General theory of Lie groupoids and Lie algebroids},
  series={London Mathematical Society Lecture Note Series},
  volume={213},
  publisher={Cambridge University Press, Cambridge},
  date={2005},
  pages={xxxviii+501},
  isbn={978-0-521-49928-3},
  isbn={0-521-49928-3},
  review={\MRref {2157566}{2006k:58035}},
  doi={10.1017/CBO9781107325883},
}

\bib{Meyer-Zhu:Groupoids}{article}{
  author={Meyer, Ralf},
  author={Zhu, {Ch}enchang},
  title={Groupoids in categories with pretopology},
  journal={Theory Appl. Categ.},
  volume={30},
  date={2015},
  pages={1906--1998},
  issn={1201-561X},
  eprint={http://www.tac.mta.ca/tac/volumes/30/55/30-55abs.html},
}

\bib{Muhly-Williams:Continuous-traceII}{article}{
  author={Muhly, Paul S.},
  author={Williams, Dana P.},
  title={Continuous trace groupoid $C^*$\nobreakdash -algebras. II},
  journal={Math. Scand.},
  volume={70},
  date={1992},
  number={1},
  pages={127--145},
  issn={0025-5521},
  review={\MRref {1174207}{93i:46117}},
  eprint={http://www.mscand.dk/article/view/12390},
}

\bib{Muhly-Williams:Equivalence.FellBundles}{article}{
  author={Muhly, Paul S.},
  author={Williams, Dana P.},
  title={Equivalence and disintegration theorems for Fell bundles and their \(C^*\)\nobreakdash -algebras},
  journal={Dissertationes Math. (Rozprawy Mat.)},
  volume={456},
  date={2008},
  pages={1--57},
  issn={0012-3862},
  review={\MRref {2446021}{2010b:46146}},
  doi={10.4064/dm456-0-1},
}

\bib{Muhly-Williams:Renaults_equivalence}{book}{
  author={Muhly, Paul S.},
  author={Williams, Dana P.},
  title={Renault's equivalence theorem for groupoid crossed products},
  series={NYJM Monographs},
  volume={3},
  publisher={State University of New York University at Albany},
  place={Albany, NY},
  date={2008},
  pages={87},
  review={\MRref {2547343}{2010h:46112}},
  eprint={http://nyjm.albany.edu/m/2008/3.htm},
}

\bib{Renault:Groupoid_Cstar}{book}{
  author={Renault, Jean},
  title={A groupoid approach to $\textup C^*$\nobreakdash -algebras},
  series={Lecture Notes in Mathematics},
  volume={793},
  publisher={Springer},
  place={Berlin},
  date={1980},
  pages={ii+160},
  isbn={3-540-09977-8},
  review={\MRref {584266}{82h:46075}},
  doi={10.1007/BFb0091072},
}

\bib{Renault:Representations}{article}{
  author={Renault, Jean},
  title={Repr\'esentation des produits crois\'es d'alg\`ebres de groupo\"\i des},
  journal={J. Operator Theory},
  volume={18},
  date={1987},
  number={1},
  pages={67--97},
  issn={0379-4024},
  review={\MRref {912813}{89g:46108}},
  eprint={http://www.theta.ro/jot/archive/1987-018-001/1987-018-001-005.html},
}

\bib{Renault:Ideal_structure}{article}{
  author={Renault, Jean},
  title={The ideal structure of groupoid crossed product $C^*$\nobreakdash -algebras},
  note={With an appendix by Georges Skandalis},
  journal={J. Operator Theory},
  volume={25},
  date={1991},
  number={1},
  pages={3--36},
  issn={0379-4024},
  review={\MRref {1191252}{94g:46074}},
  eprint={http://www.theta.ro/jot/archive/1991-025-001/1991-025-001-001.html},
}

\bib{Renault:Transverse_dynamical}{article}{
  author={Renault, Jean},
  title={Transverse properties of dynamical systems},
  conference={ title={Representation theory, dynamical systems, and asymptotic combinatorics}, },
  book={ series={Amer. Math. Soc. Transl. Ser. 2}, volume={217}, publisher={Amer. Math. Soc.}, place={Providence, RI}, },
  date={2006},
  pages={185--199},
  review={\MRref {2276108}{2007k:46130}},
}

\bib{Renault:Cartan.Subalgebras}{article}{
  author={Renault, Jean},
  title={Cartan subalgebras in $C^*$\nobreakdash -algebras},
  journal={Irish Math. Soc. Bull.},
  number={61},
  date={2008},
  pages={29--63},
  issn={0791-5578},
  review={\MRref {2460017}{2009k:46135}},
  eprint={http://www.maths.tcd.ie/pub/ims/bull61/S6101.pdf},
}

\bib{Renault:Topological_amenability_is_a_Borel_property}{article}{
  author={Renault, Jean},
  title={Topological amenability is a Borel property},
  journal={Math. Scand.},
  volume={117},
  date={2015},
  number={1},
  pages={5--30},
  issn={0025-5521},
  review={\MR {3403785}},
  eprint={http://www.mscand.dk/article/view/22235},
}

\bib{Rennie-Robertson-Sims:Groupoid_CP}{article}{
  author={Rennie, Adam},
  author={Robertson, David},
  author={Sims, Aidan},
  title={Groupoid algebras as Cuntz--Pimsner algebras},
  journal={Math. Scand.},
  status={accepted},
  note={\arxiv {1402.7126}},
  date={2014},
}

\bib{Resende:Etale_groupoids}{article}{
  author={Resende, Pedro},
  title={\'Etale groupoids and their quantales},
  journal={Adv. Math.},
  volume={208},
  date={2007},
  number={1},
  pages={147--209},
  issn={0001-8708},
  review={\MRref {2304314}{}},
  doi={10.1016/j.aim.2006.02.004},
}

\bib{Sims-Williams:Equivalence_reduced_groupoid}{article}{
  author={Sims, Aidan},
  author={Williams, Dana P.},
  title={Renault's equivalence theorem for reduced groupoid $C^*$\nobreakdash -algebras},
  journal={J. Operator Theory},
  volume={68},
  date={2012},
  number={1},
  pages={223--239},
  issn={0379-4024},
  review={\MRref {2966043}{}},
  eprint={http://www.theta.ro/jot/archive/2012-068-001/2012-068-001-012.html},
}

\bib{Sims-Williams:Equivalence_reduced}{article}{
  author={Sims, Aidan},
  author={Williams, Dana P.},
  title={An equivalence theorem for reduced Fell bundle $C^*$\nobreakdash -algebras},
  journal={New York J. Math.},
  volume={19},
  date={2013},
  pages={159--178},
  issn={1076-9803},
  review={\MRref {3084702}{}},
  eprint={http://nyjm.albany.edu/j/2013/19-11.html},
}

\bib{Sims-Williams:primitive_ideals}{article}{
  author={Sims, Aidan},
  author={Williams, Dana P.},
  title={The primitive ideals of some étale groupoid C*-algebras},
  date={2015},
  status={preprint},
  note={\arxiv {1501.02302}},
}

\bib{Williams:crossed-products}{book}{
  author={Williams, Dana P.},
  title={Crossed products of $C^*$\nobreakdash -algebras},
  series={Mathematical Surveys and Monographs},
  volume={134},
  publisher={Amer. Math. Soc.},
  place={Providence, RI},
  date={2007},
  pages={xvi+528},
  isbn={978-0-8218-4242-3; 0-8218-4242-0},
  review={\MRref {2288954}{2007m:46003}},
  doi={10.1090/surv/134},
}
  \end{biblist}
\end{bibdiv}
\end{document}